\newtheorem*{theorem*}{Theorem A}
\newtheorem{theorem}{Theorem}[section]
\newtheorem{proposition}[theorem]{Proposition}
\newtheorem{lemma}[theorem]{Lemma}
\newtheorem{corollary}[theorem]{Corollary}
\theoremstyle{definition}
\newtheorem{definition}[theorem]{Definition}
\theoremstyle{remark}
\newcommand{\Hom}{\ensuremath{\mathrm{Hom}}}
\newcommand{\sk}{\ensuremath{\mathrm{sk}}}
\newcommand{\lcm}{\ensuremath{\mathrm{lcm}}}
\newcommand{\Id}{\ensuremath{\mathrm{Id}}}
\newcommand{\rank}{\mathop{\mathrm{rank}}}
\newcommand{\N}{\ensuremath{\mathbb{N}}}
\newcommand{\Z}{\ensuremath{\mathbb{Z}}}
\newcommand{\T}{\ensuremath{\mathcal{T}}}
\newcommand{\J}{\ensuremath{\mathcal{J}}}
\newcommand{\lra}{\longrightarrow}
\newcommand{\dr}[3]{\ensuremath{#1\stackrel{#2}
{\longrightarrow}#3}}
\newcommand{\ddr}[5]{\ensuremath{#1\stackrel{#2}
{\longrightarrow}#3\stackrel{#4}{\longrightarrow}#5}}
\newcommand{\ddddr}[9]{\ensuremath{#1\stackrel{#2}
{\longrightarrow}#3\stackrel{#4}{\longrightarrow}#5
\stackrel{#6}{\longrightarrow}#7}\stackrel{#8}{\longrightarrow}#9}
\begin{document}
\title{The degrees of maps between $(2n-1)$-Poincar\' e complexes}
\author{Jelena Grbi\'{c}, Aleksandar Vu\v ci\' c}
\address{School of Mathematics, University of Southampton,
         Southampton SO17 1BJ, United Kingdom}
\email{J.Grbic@soton.ac.uk}

\address{Faculty of Mathematics, Belgrade University, Belgrade, Serbia}
\email{avucic@eunet.rs}

\subjclass[2010]{Primary  	55M25, 57P10, Secondary 55P15, 57R19, 57N15.}
\keywords{map degree, (2n-1)-Poincar\' e complexes, Hiltion-Milnor theorem}

\begin{abstract}
In this paper, using exclusively homotopy theoretical methods,  we study degrees of maps between $(n-2)$-connected $(2n-1)$-dimensional Poincar\' e complexes which have torsion free integral homology. Necessary and sufficient algebraic conditions for the existence of map degrees between such Poincar\' e complexes are established.

We calculate the set of all map degrees between certain two $(n-2)$-connected $(2n-1)$-dimensional torsion free Poincar\'e complexes.

For low $n$, using knowledge of possible degrees of self maps, we classify, up to homotopy, torsion free $(n-2)$-connected $(2n-1)$-dimensional Poincar\' e complexes.
\end{abstract}

\maketitle


\section{Introduction}
One of the earliest homotopy invariants for a map $f\colon M\lra N$ between two closed oriented manifolds of the same dimension is its degree. The notion of degree originated from the main idea in Gauss's proofs for the fundamental theorem of algebra and was first formally formulated by Brouwer~\cite{Brouwer} for a map between spheres. Brouwer showed that the degree is a homotopy invariant, and used it to prove the Brouwer fixed point theorem. Brouwer's notion of degree in the years 1910-1912 preceded the rigorous development of homology, and used the technique of simplicial approximation. In analysis, the degree was at first defined  for smooth maps $f$ between manifolds of the same dimension as an algebraic account of the number of preimages of a regular point of $f$, then extended to continuous ones by using homotopy approximation by Milnor~\cite{Milnor}.

In modern mathematics, the degree of a map plays an important role in topology and geometry, and it serves as one of main tools in establishing existence for highly nonlinear problems~\cite{Siegberg}. In physics, the degree of a continuous map is one example of a topological quantum number.

The definition of degree in terms of homology is probably the most elegant one. Consider a map $f\colon M\lra N$ between closed oriented $n$-dimensional manifolds and the induced map in homology $f_*\colon H_*(M)\lra H_*(N)$. Denote by $[M]$ and $[N]$  the fundamental classes of $M$ and $N$, respectively, specified by the orientation.  Then the {\it  degree} $\deg(f)$ of the map $f$ is defined to be a unique integer such that $f_*[M]=\deg(f) [N]$.

Our approach in studying the degree of a map is homotopy theoretical in  nature and provides a further generalisation to continuous maps between a wider class of topological spaces. For a $CW$-complex $X$, let $\sk_n(X)$  denote the $n$th skeleton of $X$. Using an elementary homotopy theoretical approach, we can see that for two $n$-dimensional manifolds $M$ and $N$,  the degree of $f\colon M\lra N$ is $d$ if  and only if there is a commutative diagram of cofibrations
\[
\xymatrix{
S^{n-1}\ar[r]^-{\alpha}\ar[d]^{[d]} & \sk_{n-1}(M)\ar[r]\ar[d]^{f_|} & M\ar[d]^{f}\\
S^{n-1}\ar[r]^-{\beta} & \sk_{n-1}(N)\ar[r] & N}
\]
where $[d]\colon S^{n-1}\lra S^{n-1}$ denotes the $d$-degree map on the $(n-1)$-sphere and $f_|$ is the restriction of $f$ to $\sk_{n-1}(M)$.

From the above condition, it is readily seen that degree is not a property shared only by manifolds. For a finite $n\in\N$, we define degree of a map between any two $n$-dimensional $CW$-complexes which both have only one $n$-dimensional cell. In particular, we can talk about degree of a map between two connected Poincar\' e complexes of the same dimension.

Denote by $\T^n$ all $n$-dimensional $CW$-complexes with one $n$-cell and a fixed choice of a generator in $H_n(X, \sk_{n-1}(X))$.

\begin{definition}
\label{defdegree}
Let $f\colon X\lra Y$ be a map between $\T^n$-spaces. The {\it degree} $\deg(f)$ of the map $f$ is $d$ if and only if there is a commutative diagram of cofibrations 
\begin{equation}
\xymatrix{
S^{n-1}\ar[r]^-{\alpha}\ar[d]^{[d]} & \sk_{n-1}(X)\ar[r]\ar[d]^{f_|} & X\ar[d]^{f}\\
S^{n-1}\ar[r]^-{\beta} & \sk_{n-1}(Y)\ar[r] & Y}
\end{equation}
where $f_|$ is the restriction of $f$ to $\sk_{n-1}(X)$.
\end{definition}

This definition can be reformulated in homology terms by saying that a map $f\colon X\lra Y$ between $\T^n$-spaces has degree $d$ if the induced map in homology
\[
f_*\colon {H}_n((X, X\setminus\{pt\});\Z) \lra {H}_n((Y, Y\setminus\{pt\});\Z)
\]
satisfies that $f_*([X])=d[Y]$, where $[X]$ and $[Y]$ are a fixed choice of generators of ${H}_n((X, X\setminus\{pt\});\Z)$ and ${H}_n((Y, Y\setminus\{pt\});\Z)$, respectively.

\begin{definition}
For any two $CW$-complexes $X,Y\in\T^n$, define
\[
D(X,Y):=\{ \deg (f)\ |\ f\colon X\lra Y\}
\]
to be the set of all possible degrees of maps from $X$ to $Y$.
\end{definition}
A general question is to determine the set $D(X,Y)$ for given spaces $X,Y\in\T^n$. As mentioned earlier some work has been done on determining the set $D(M,N)$, where $M$ and $N$ are manifolds. In dimension 2 the answer is known~\cite{Edmonds} but in general the question is difficult. Recently, many achievements have been made for certain classes of 3-manifolds (see for example~\cite{Hayat, Rong, Soma, Wang} and therein references), but in dimensions higher than 3 there are few relevant works (see for example~\cite{Baralic, Baues, Duan, Hoffman, Mcgibbon}) to the best of our knowledge. A significant breakthrough in the subject was achieved by Duan and Wang~\cite{DuanWang1, DuanWang2} who gave necessary and sufficient algebraic conditions for the existence of a map degree  between two given closed $(n-1)$-connected $2n$-dimensional manifolds. Their algebraic conditions are obtained using geometry and topology of this wide class of manifolds.

Our approach in studying the degree of maps is novel in two senses: first, we study the degree of maps not only between manifolds but more generally between Poincar\' e complexes and second, we consider maps between certain family of odd dimensional (manifolds) Poincar\' e complexes. The methods used are predominantly homotopy theoretical and are centred around the Hilton-Milnor theorem and detailed analysis of how right multiplication distributes through addition (recall that the right distributivity law does not hold amongst continuous maps). Geometric information appearing in terms of intersection forms and linking numbers of manifolds, which are commonly used in the study of map degrees, we interpret in an analogous information about Whitehead products.

The main result gives necessary and sufficient algebraic conditions for the existence of a map degree between  $(n-2)$-connected Poincar\' e $(2n-1)$-complexes. The family of all  $(n-2)$-connected Poincar\' e $(2n-1)$-complexes $X$ with the finitely generated, torsion free homology group $H_{n-1}(X;\Z)$ will be denoted by $\J_n$. Other notations, needed to state the theorem, are explained in Sections 2 and 3.

\begin{theorem*}

Let $X,Y\in\J_n$ and $\geq 3$. Then there is a map $f\colon X\lra Y$ of degree $d$ if and only if the system of equations
\begin{equation}
\label {1.jednacina}
\sum^k_{i=1}a_{it}(p_i\alpha)+\sum^{2k}_{i=k+1}a_{it}(\eta p_i\alpha)+\left(\sum^k_{i=1}\binom{a_{it}}{2}h_{ii}^\alpha+\sum_{1\leq i<j\leq k}m^\alpha_{ij}a_{it}a_{jt}+\sum^k_{i=1}a_{it}a_{k+i t}\right)[\Id_{S^{n-1}},\Id_{S^{n-1}}]\eta\end{equation} \[=d(q_t\beta)\quad \text{ for }1\leq t\leq m 
\]
\begin{equation}
\sum^{2k}_{i=k+1}a_{it}(p_i\alpha)=d(q_t\beta) \quad \text{ for } m+1\leq t\leq 2m 
\end{equation}
\begin{equation}
\sum^k_{i=1}a_{is}a_{it}h_{ii}^\alpha+\sum_{1\leq i<j\leq k}(a_{is}a_{jt}+a_{it}a_{js})m_{ij}^\alpha+\sum_{i=1}^k(a_{is}a_{i+k t}+a_{it}a_{i+k s})\equiv dm^\beta_{st}\quad   (\hspace{-3.5mm}\mod 2)
\end{equation}
for $1\leq s<t\leq m$ 
\begin{equation}
\label{4.jednacina}
\sum_{i=1}^k a_{is}a_{i+k t}=d\delta_{s, t-m} \quad\text{ for } 1\leq s\leq m, m+1\leq t\leq 2m
\end{equation}
has an integral solution $\{ a_{is} \ | \ 1\leq i\leq 2k, 1\leq s\leq 2m\}$.
\end{theorem*}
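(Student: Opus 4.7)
The plan is to translate the existence of a degree-$d$ map $f\colon X\to Y$ (as formulated in Definition~\ref{defdegree} applied in dimension $2n-1$) into a single equation in $\pi_{2n-2}(\sk_{2n-2}(Y))$, and then project both sides onto a Hilton--Milnor basis to extract equations~(1)--(4). First I invoke the structural description of $\J_n$ developed earlier to identify $\sk_{2n-2}(X)\simeq\bigvee_{i=1}^{k}S^{n-1}\vee\bigvee_{i=1}^{k}S^{n}$ and $\sk_{2n-2}(Y)\simeq\bigvee_{t=1}^{m}S^{n-1}\vee\bigvee_{t=1}^{m}S^{n}$; the number of $(n-1)$- and $n$-spheres agree in each skeleton by Poincar\'e duality applied to the torsion-free homology. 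Writing $\alpha$ and $\beta$ for the attaching maps of the top cells, the defining diagram of $\deg(f)=d$ is equivalent to asking for a map $g\colon\sk_{2n-2}(X)\to\sk_{2n-2}(Y)$ with $g\circ\alpha=d\cdot\beta$ in $\pi_{2n-2}(\sk_{2n-2}(Y))$.

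Second, I parametrize $g$. Since its source is a wedge, $g$ is determined by its restrictions to the sphere summands. For $i\le k$, the restriction $g|_{S^{n-1}_{i}}$ lies in $\pi_{n-1}(\sk_{2n-2}(Y))\cong\Z^{m}$ and is recorded by integers $(a_{i1},\ldots,a_{im})$, namely the degrees into the $(n-1)$-sphere summands of $Y$. For $k<i\le 2k$, the restriction $g|_{S^{n}_{i}}\in\pi_{n}(\sk_{2n-2}(Y))$ splits into an $\eta$-part $(a_{i,1},\ldots,a_{i,m})$ (into the $(n-1)$-summands) and a degree part $(a_{i,m+1},\ldots,a_{i,2m})$ (into the $n$-summands). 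These are precisely the unknowns of the system.

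The substantive step is to expand $g\circ\alpha$ against a Hilton--Milnor basis of $\pi_{2n-2}(\sk_{2n-2}(Y))$, and compare with the expansion of $d\cdot\beta$. The attaching maps decompose as sphere components $p_i\alpha$ and $q_t\beta$, self Whitehead/Hopf components with coefficients $h_{ii}^{\alpha},h_{tt}^{\beta}$, and mixed Whitehead components with coefficients $m_{ij}^{\alpha},m_{st}^{\beta}$. Composing $g$ with $\alpha$ requires the non-additive compositional calculus of homotopy classes: right-distributivity fails, and its failure is measured by identities of the form
\[
(g_{1}+g_{2})\circ\alpha=g_{1}\circ\alpha+g_{2}\circ\alpha+[g_{1},g_{2}]\circ H(\alpha)+\cdots,
\]
where $H$ is a James--Hopf invariant. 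Iterating these identities, the self-composition relation $(a\iota)\circ\alpha=a(\iota\circ\alpha)+\binom{a}{2}[\iota,\iota]\circ H(\alpha)$ produces the quadratic terms $\binom{a_{it}}{2}h_{ii}^{\alpha}$, and cross-summand interaction produces the bilinear terms $a_{it}a_{jt}m_{ij}^{\alpha}$ and $a_{it}a_{k+i,t}$. Matching Hilton--Milnor components then gives exactly the four families: equation~(1) arises from the $\pi_{2n-2}(S^{n-1}_{t})$-factor for $t\le m$; equation~(2) from the $\pi_{2n-2}(S^{n}_{t-m})$-factor for $t>m$; equation~(3) from the coefficient modulo~$2$ of $[\Id_{S^{n-1}},\Id_{S^{n-1}}]\eta$ in the $(s,t)$-pair of $(n-1)$-spheres of $Y$; equation~(4) from the coefficient of the mixed $(n-1,n)$-Whitehead product. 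Necessity is then read off from any given $f$, and sufficiency follows by assembling $g$ summand-by-summand from a solution $\{a_{is}\}$ and extending over the top cell, the matched coefficients guaranteeing $g\circ\alpha=d\beta$.

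The principal obstacle is the third step: the precise non-distributive expansion of $g\circ\alpha$ and the bookkeeping of Hopf invariants, Whitehead products, and $\eta$-compositions so that the extracted relations land exactly as equations~(1)--(4). One must also verify that for $n\ge 3$ no higher-weight Hilton basis products contribute to $\pi_{2n-2}$ beyond those recorded by the $h_{ii}$, $m_{ij}$ and sphere components, so that the projection onto the Hilton--Milnor basis produces neither extra equations nor hidden mixing between the four families.
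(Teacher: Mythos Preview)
Your proposal is correct and follows essentially the same route as the paper: reduce to the equation $F\circ\alpha=\beta[d]$ in $\pi_{2n-2}(\bar Y)$ via Lemma~\ref{lemmadegree}, parametrize $F$ by the integers $a_{is}$ as in~\eqref{mapP}, expand $F\circ\alpha$ using the Hilton--Milnor decomposition together with the Hopf--Hilton distributivity calculus, and then read off the four families of equations by comparing first- and second-order invariants (the paper packages this bookkeeping into Propositions~\ref{1storder} and~\ref{2ndorder}). One small slip: your description of equation~(3) as ``the coefficient of $[\Id_{S^{n-1}},\Id_{S^{n-1}}]\eta$'' is not quite right---it is the coefficient of the cross Whitehead product $[j_s,j_t]\eta$ for $s\neq t$, i.e.\ the invariant $M_{st}(F\circ\alpha)$, whereas $[\Id,\Id]\eta$ is the self-product appearing inside equation~(1).
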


Although we are illustrating our method on maps for determining degrees of maps between $(n-2)$-connected Poincar\' e $(2n-1)$-complexes, that is not its limitation. Namely, if applied to $(n-1)$-connected $2n$-dimensional manifolds, it fully recovers the results of Duan and Wang~\cite{DuanWang1, DuanWang2}.

In Section~\ref{examples} we apply our criterium to explicitly calculate the set $D(X,Y)$ for certain choices of $X,Y\in \J_n$.

Many research has been done in attempt to classify manifolds of any type. In Section 7, we concentrate on classifying, up to homotopy equivalences, torsion free $(n-2)$-connected $2n$-dimensional Poincar\' e complexes for $n\leq 7$. To do so, we use the knowledge of possible degrees of self maps. A curtail technical result is a characterisation of a homotopy equivalence as a map of degree $\pm 1$.

\section{Poincar\' e complexes}
\subsection{Poincar\' e complexes: definitions and main properties} In this paper we shall consider only simply-connected $CW$-complexes and thus adapt the definition of Poincar\' e complexes to the spaces at hand. Therefore, we say that a simply-connected $CW$-complex $X$ is called a \emph{Poincar\' e complex of formal dimension $n$}  (Poincar\' e $n$-complex for short) if  the Poincar\' e duality condition is satisfied, that is, if there is a class $[X]\in H_n(X; \Z)$ such that the cap product with it induces an isomorphism
\[
\cap [X]\colon \dr{H^*(X;\Z)}{\cong}{H_{n-*}(X; \Z)}.
\]

Wall~\cite{WallP} proved that a simply-connected $CW$-complex $X$ is a Poincar\' e $n$-complex if and only if the cap product map
\[
\cap [X]\colon \dr{H^*(X;M)}{}{H_{n-*}(X; M)}
\]
is an isomorphism for all $\Z$-modules $M$.

The Poincar\' e duality condition imposes certain restrictions to a possible cup product structure on $H^*(X)$.  Let $R$ be a field. The cup product pairing
 \[
 \dr{H^{k}(X; R)\otimes H^{n-k}(X; R)}{\cup}{H^{n}(X; R)\cong R}
 \]
is non-singular for each $k$. Equivalently, the maps
\[
H^{n-k}(X;R)\lra \Hom (H^k(X; R), R)
\]
\[
H^{k}(X;R)\lra \Hom (H^{n-k}(X; R), R)
\]
induced by the cup pairing are isomorphisms.

Over $\Z$, the induced paring
\[
 \dr{H^{k}(X; \Z)/\mathrm{Tors}\otimes H^{n-k}(X;\Z)/\mathrm{Tors}}{\cup}{\Z}
 \]
is non-singular for each $k$, where $\mathrm{Tors}$ denotes the torsion part of the appropriate abelian group.

\subsection{Torsion free $(n-2)$-connected Poincar\' e $(2n-1)$-complexes}

Let $\J_n$ denote the family of $(n-2)$-connected Poincar\' e $(2n-1)$-complexes $X$ with the finitely generated, torsion free homology group $H_{n-1}(X;\Z)$. If $\rank  H_{n-1}(X;\Z)=k$, we shall say that the Poincar\' e complex $X$ is of rank $k$. For $X\in\J_n$, let $\bar X$ denote the $(2n-2)$-skeleton of $X$, that is, $\bar X=\sk_{2n-2}(X)$.

Let $k=\rank H_{n-1}(X;\Z)$. Considering the space $ \bigvee_{i=1}^k (S^{n-1}_i\vee S^n_{i+k})$, fix a choice of its cohomology generators  $e^{n-1}_i$ and $e^n_{i}$ corresponding, under the inclusion, to generators of $H^{n-1}(S^{n-1}_i;\Z)$ and $H^n(S^n_{i+k})$, and by $e^{2n-1}$ a generator of $H^{2n-1}(X;\Z)$.

\begin{lemma}
\label{heq}
\begin{enumerate}
\item For $X\in\J_n$ of rank $k$, there is a homotopy equivalence
\[
h\colon \bigvee_{i=1}^k (S^{n-1}_i\vee S^n_{i+k})\lra \bar X.
\]
\item The equivalence $h$ can be chosen so that
\[
{h^*}^{-1}(e^{n-1}_i) \cup {h^*}^{-1}(e_j^n)=\delta_{ij}e^{2n-1} \text{ for }1\leq i,j\leq k
\]
that is, so that the cup product matrix of $X$ is the identity matrix $I_k$.
\end{enumerate}
\end{lemma}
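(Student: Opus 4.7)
The plan is to establish the two claims separately, both reducing to standard Hurewicz and Poincar\'e duality computations.

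For (1), the first step is to compute $H_*(X;\Z)$ using Poincar\'e duality and the universal coefficient theorem: the isomorphism $H_i(X)\cong H^{2n-1-i}(X)$ together with torsion-freeness of $H_{n-1}(X)$ forces $H_0(X)=H_{2n-1}(X)=\Z$, $H_{n-1}(X)=H_n(X)=\Z^k$, and $H_j(X)=0$ otherwise. The long exact sequence of the pair $(X,\bar X)$, using $X/\bar X\simeq S^{2n-1}$, then yields $H_*(\bar X)\cong H_*(X)$ for $*<2n-2$ and $H_j(\bar X)=0$ for $j\geq 2n-2$. Because $X$ is simply connected (as $n\geq 3$) with free homology in every degree, a standard minimal-CW argument replaces $X$ up to homotopy by a model with cells only in dimensions $0,n-1,n,2n-1$, so that $\bar X$ has cells only in dimensions $0,n-1,n$.

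Next, I note that $\sk_{n-1}(\bar X)\simeq\bigvee_{i=1}^{k}S_i^{n-1}$ and the $n$-cells of $\bar X$ are attached by maps $\phi_i\in\pi_{n-1}(\bigvee_j S_j^{n-1})$. The rank equality $\rank H_n(\bar X)=\rank H_{n-1}(\bar X)=k$ forces the cellular boundary $\partial_n\colon \Z^k\to\Z^k$, which coincides with the Hurewicz image of $(\phi_1,\ldots,\phi_k)$, to vanish; since $\bigvee_j S_j^{n-1}$ is $(n-2)$-connected with $n-1\geq 2$, Hurewicz is an isomorphism in degree $n-1$, and therefore each $\phi_i$ is null-homotopic. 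The $n$-cells then split off as $n$-sphere wedge summands, producing the desired homotopy equivalence $h\colon\bigvee_{i=1}^{k}(S_i^{n-1}\vee S_{i+k}^{n})\to\bar X$.

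For (2), Poincar\'e duality combined with torsion-freeness makes the cup product pairing
\[
H^{n-1}(X;\Z)\otimes H^{n}(X;\Z)\longrightarrow H^{2n-1}(X;\Z)\cong\Z
\]
non-singular, so in any basis it is encoded by a matrix $M\in GL_k(\Z)$. Starting from any $h_0$ produced by (1), I let $M$ be the resulting pairing matrix. Self-maps of $\bigvee_{i=1}^{k}S_{i+k}^{n}$ are classified by $M_k(\Z)$ via $\pi_n(\bigvee_j S_{j+k}^{n})\cong\Z^k$, and the invertible ones realise self-equivalences. Composing $h_0$ with $\Id_{\bigvee S^{n-1}}\vee\psi_0$, where $\psi_0$ realises $(M^T)^{-1}$ on the $n$-sphere summands, replaces $M$ by the identity, as required.

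The main obstacle I anticipate is the justification of the minimal CW structure used in part (1); although it is a standard application of cellular obstruction theory rooted in the freeness of the homology at each degree, the argument must be implemented cell-by-cell rather than merely invoked, since it underpins the wedge decomposition. A smaller subtlety in part (2) is careful bookkeeping of matrix-versus-transpose conventions when changing basis on the wedge of $n$-spheres, so that $\psi_0$ genuinely inverts the pairing matrix rather than introducing an extraneous transpose.
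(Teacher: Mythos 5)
Your proof is correct and follows essentially the same route as the paper: part (1) is the same Hurewicz-plus-torsion-freeness argument forcing the $n$-cell attaching maps into $\bigvee_i S^{n-1}_i$ to be null-homotopic so that the $n$-cells split off as wedge summands, and part (2) amounts to the same basis-change argument, merely packaged as post-composition with a self-equivalence of $\bigvee_i S^n_{i+k}$ realizing the inverse of the pairing matrix rather than the paper's direct choice of a Poincar\'e-dual basis $e^n_t$ with $x_t=e^n_t\cap e$ realized by spherical classes. No gaps.
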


\begin{proof}
\begin{enumerate}
\item Since $X\in\J_n$, the first non-trivial homology group is $H_{n-1}(X;\Z)\cong H_n(X;\Z)\cong\Z^k$. Using the Hurewicz isomorphism $h\colon \pi_{n-1}(X)\lra H_{n-1}(X;\Z)$, we see that the $(n-1)$-skeleton $\sk_{n-1}(X)$ is a wedge of $k$ spheres $S^{n-1}$. To form the $n$-skeleton $\sk_n(X)$,  some $n$-dimensional cells need to be attached by $f\colon S^{n-1}\lra \bigvee_{i=1}^kS_i^{n-1}$. The homotopy type of $f$ is determined by its degree $\deg(f)\in\Z$. Any non-trivial attaching map $f$ would produce a $\deg(f)$-torsion class in homology. Therefore all the attaching maps are null-homotopic, by Poincar\' e duality $k$ $n$-dimensional cells need to be attached, and thus $\bar X\simeq \bigvee_{i=1}^k (S^{n-1}_i\vee S^n_{i+k})$.

\item We shall start by fixing homology and cohomology generators of $X$.
First, choose $(n-1)$-dimensional generators $e^{n-1}_1,\ldots, e^{n-1}_k$ in the cohomology  group $H^{n-1}(X)$ to be dual to the homology basis  $\{ x_t \ |\ t=1,\ldots, k\}$ of $H_{n-1}(X;\Z)$ obtained under the inclusions $\iota_i\colon S^{n-1}_{i}\lra X$. Using Poincar\' e duality, choose a basis $\{ e^n_t\ |\ t=1,\ldots ,k\}$ of $H^n(X;\Z)$ such that $x_t=e^n_t\cap e$, where $e$ is a fixed generator of $H_{2n-1}(X;\Z)$. Dualise the basis of $H^{n}(X)$ to obtain the dual basis $\{ y_t \ |\ t=1,\ldots, k\}$ of $H_{n}(X;\Z)$, that is, $\langle y_i, e^{n}_j\rangle=\delta_{ij}\equiv e^{n-1}_j(y_i)$ for $1\leq i,j\leq k$.

Denote by $(a_{ij})$ the cup product matrix of $X$, that is, $e^{n-1}_i\cup e^n_j=a_{ij}e^{2n-1}$ for $1\leq i,j\leq k$. The way the cohomology generators are chosen simplifies the cup product matrix. Namely,
\[
a_{ij}=(a_{ij}e^{2n-1})\cap e=(e^{n-1}_i\cup e^{n}_j)\cap e=e^{n-1}_i\cap (e^n_j\cap e)=e^{n-1}_i\cap x_j=\delta_{ij}
\]
for $1\leq i,j\leq k$.
In other words, since $x_i$ and $y_i$ are spherical classes, there are maps $\iota_i\colon S^{n-1}\lra \bar X$ and $j_i\colon S^n\lra \bar X$ such that $h=\bigvee_i (\iota_i\vee j_i)$ is a homotopy equivalence. In this way we choose a cohomology basis of $X$ for which the cup product matrix is the identity matrix $I_k$.

\end{enumerate}
\end{proof}

\section{Homotopy invariants of attaching maps}

For torsion free $(n-2)$-connected Poincar\' e  $(2n-1)$-complexes $X$ and $Y$, denote by $\alpha\colon S^{2n-2}\lra \bar X$ and $\beta\colon S^{2n-2}\lra\bar Y$ the corresponding attaching maps of the  top cell.
Definition~\ref{defdegree} applied to $X,Y\in\J_n$ gives the following homotopy theoretical condition  for a map $f\colon X\lra Y$ to have degree $d$.

\begin{lemma}
\label{lemmadegree}
Let $X,Y\in\J_n$ and $f\colon X\lra Y$ a continuous map. The {\emph degree} $\deg(f)=d$ if and only if there is a commutative diagram
\begin{equation}
\label{stependiagram2}
\xymatrix{
S^{2n-2}\ar[r]^-{\alpha}\ar[d]^{[d]} & \bar X\ar[d]^{f_|} \\
S^{2n-2}\ar[r]^-{\beta} & \bar Y.}
\end{equation}
where $f_|$ is the restriction of $f$ on $\bar X$.
\qed
\end{lemma}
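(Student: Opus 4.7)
The plan is to deduce this as a direct specialisation of Definition~\ref{defdegree} to the case at hand, where the ambient dimension is $2n-1$.

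First I would verify that $X,Y \in \T^{2n-1}$, so that Definition~\ref{defdegree} actually applies. By Poincar\' e duality $H_{2n-1}(X;\Z)\cong H^0(X;\Z)\cong\Z$, and Lemma~\ref{heq} identifies $\sk_{2n-2}(X) = \bar X \simeq \bigvee_{i=1}^k(S^{n-1}_i\vee S^n_{i+k})$. Consequently $X$ is obtained from $\bar X$ by attaching a single $(2n-1)$-cell along the map $\alpha\colon S^{2n-2}\lra \bar X$, which serves as the canonical choice of generator in $H_{2n-1}(X,\sk_{2n-2}(X))$; the same holds for $Y$ with attaching map $\beta$.

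Next, by cellular approximation I would replace $f$ by a homotopic cellular map, which exists and restricts to a map $f_|\colon \bar X \lra \bar Y$ on $(2n-2)$-skeleta; degree is a homotopy invariant, so this replacement is harmless. Applying Definition~\ref{defdegree} with $n$ replaced by $2n-1$ then characterises $\deg(f)=d$ as the existence of a commutative diagram of cofibrations
\[
\xymatrix{
S^{2n-2}\ar[r]^-{\alpha}\ar[d]^{[d]} & \sk_{2n-2}(X)\ar[r]\ar[d]^{f_|} & X\ar[d]^{f}\\
S^{2n-2}\ar[r]^-{\beta} & \sk_{2n-2}(Y)\ar[r] & Y.}
\]
Since $\bar X=\sk_{2n-2}(X)$ and $\bar Y=\sk_{2n-2}(Y)$, the left-hand square of this diagram is precisely the square appearing in the statement of the lemma.

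It remains to observe the equivalence between commutativity of the left square alone and commutativity of the full cofibration diagram. The right-hand square always commutes for a cellular map $f$ with restriction $f_|$, by the very definition of ``restriction''. Conversely, given a commutative left square, pushing out along the cofibrations $\bar X\lra X$ and $\bar Y\lra Y$ yields an extension of $f_|$ to a map $X\lra Y$ inducing multiplication by $d$ on the top cell; the homotopy invariance of degree together with the uniqueness of extensions up to homotopy guarantees this extension can be identified with $f$. No step here is delicate; the only point requiring care is the bookkeeping to confirm that the attaching maps $\alpha$ and $\beta$ in Definition~\ref{defdegree} coincide with the top-cell attaching maps of $X$ and $Y$, and this is immediate once $X,Y\in\T^{2n-1}$ has been established.
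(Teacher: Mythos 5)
Your argument is correct and matches the paper's treatment: the paper states this lemma without proof, regarding it as the immediate specialisation of Definition~\ref{defdegree} to spaces in $\J_n$ (which lie in $\T^{2n-1}$ with $(2n-2)$-skeleton $\bar X$ and top-cell attaching map $\alpha$), which is exactly what you verify. Your additional remarks on cellular approximation and on why only the left square carries content are harmless elaborations of the same idea.
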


The lemma suggests that in order to detect the degree of the map $f$ a better understanding of the homotopy invariants of the attaching maps $\alpha$ and $\beta$ is needed. Our main tool for describing these homotopy invariants is the Hilton-Milnor theorem (see, for example,~\cite[Chapter XI, pg 511]{Whitehead}).

\subsection{Hilton-Milnor Theorem}
For topological spaces $X$ and $Y$, let $\iota_X\colon X\lra X\vee Y$ and $\iota_Y\colon Y\lra X\vee Y$ be the inclusions of the left and right summands. By abuse of notation, we also identify $\dr{\iota_X=E\circ\iota_X\colon X\lra X\vee Y}{E}{\Omega\Sigma (X\vee Y)}$ and $\dr{\iota_Y=E\circ\iota_Y\colon Y\lra X\vee Y}{E}{\Omega\Sigma (X\vee Y)}$, where $E\colon Z\lra\Omega\Sigma Z$ is the suspension map, which is adjoint of the identity $\Sigma Z\lra\Sigma Z$.

The {\it Samelson product} of $\iota_X$ and $\iota_Y$ is the map $[\iota_X,\iota_Y]\colon X\wedge Y\lra\Omega\Sigma(X\vee Y)$ defined by the composite
\[
\ddr{X\wedge Y}{\iota_X\wedge \iota_Y}{\Omega\Sigma(X\vee Y)\wedge\Omega\Sigma(X\vee Y)}{[\ ,\ ]}{\Omega\Sigma(X\vee Y)}
\]
where $[\alpha,\beta]=\alpha\beta\alpha^{-1}\beta^{-1}$.

Write $ad(\alpha)(\beta)=[\alpha, \beta]$. We can form iterated Samelson products
\[
ad(\iota_X)^i(\iota_Y )\colon X^i\wedge Y\lra\Omega\Sigma (X\vee Y)
\]
and add them up to get a map of the infinite wedge
\[
\bigvee_{i\geq 0} ad(\iota_X)^i(\iota_Y )\colon\bigvee_{i\geq 0}X^i\wedge Y\lra\Omega\Sigma(X\vee Y).
\]
Now using the universal property of the James construction~\cite{James}, form the multiplicative extensions
\[
\overline{\iota_X}=\Omega\Sigma(\iota_X)\colon\Omega\Sigma X\lra\Omega\Sigma(X\vee Y)
\]
and
\[
\overline{\bigvee_{i\geq 0} ad(\iota_X)^i(\iota_Y )}\colon\Omega\Sigma(\bigvee_{i\geq 0}X^i\wedge Y)\lra\Omega\Sigma(X\vee Y).
\]
Finally use the multiplication of $\Omega\Sigma(X\vee Y)$ to multiply these maps and to get the {\it Hilton-Milnor theorem} which asserts the following. If $X$ and $Y$ are connected, then there is a weak homotopy equivalence
\[
\Theta\colon\Omega\Sigma X\times\Omega\Sigma(\bigvee_{i\geq0}X^i\wedge Y)\lra\Omega\Sigma(X\vee Y).
\]
Of course, if $X$ and $Y$ are both CW-complexes then the above map is a homotopy equivalence.

By induction, the Hilton-Milnor theorem naturally generalises to a decomposition of the loop space on the wedge sum of $n$ suspension spaces
\[
\Omega\Sigma (\bigvee_{i=1}^n X_i)\simeq \prod_{\omega\in B}\Omega\Sigma(\omega(X_1,\ldots, X_n))\simeq\prod_{1\leq i\leq n}\Omega\Sigma X_i\times\prod_{1\leq i<j\leq n}\Omega\Sigma (X_i\wedge X_j)\times\cdots
\]
where $B$ is a Hall basis generated by the set $L=\{ \iota_{X_1},\ldots, \iota_{X_n}\}$ (see for example~\cite[Chapter XI]{Whitehead}).

For future reference, let us recall that the {\it $\omega$-Hopf-Hilton} invariant (see for example~\cite[Chapter XI]{Whitehead})
\[
H_{\omega}\colon\Omega\Sigma (\bigvee_{k=1}^n X_k)\lra \Omega\Sigma(\omega(X_1,\ldots, X_n))
\]
is defined by
\[
H_\omega= q_\omega\circ\Theta^{-1}
\]
where $q_\omega\colon  \prod_{\omega\in B}\Omega\Sigma(\omega(X_1,\ldots, X_n))\lra \Omega\Sigma(\omega(X_1,\ldots, X_n))$
is the projection onto the $\omega$ factor.

\subsection{Decomposing maps using the Hilton-Milnor theorem}

As a corollary of the Hilton-Milnor theorem, we have the following result.

\begin{proposition}
\label{HM}
Let $X$ and $Y$ be spaces such that $Y\simeq\Sigma\bigvee_{i=1}^k Y_i$. Let $n=\dim \Sigma X$ and all $\Sigma Y_i$ be $r$-connected.
Then any map $f\colon\Sigma X\lra \Sigma\bigvee_{i=1}^k Y_i$ can be uniquely decomposed in the following way.
\begin{itemize}
\item[\text{$[I]$}] If $n\leq 2r$, then
\[
f=\sum_{i=1}^k\iota_ip_if
\]
where $p_i\colon Y\lra \Sigma Y_i$ is the pinch map and $\iota_i\colon \Sigma Y_i\lra Y$ is the inclusion.
\item [\text{$[II]$}] If $n\leq 3r$, then
\[
f=\sum_{i=1}^k\iota_ip_if +\sum_{1\leq i<j\leq k}[\iota_i, \iota_j]M_{ij}(f)
\]
where $M_{ij}(f)\colon \Sigma X\lra\Sigma Y_i\wedge Y_j$ is the $(i,j)$-Hilton-Milnor invariant of~$f$ defined by the composite 
\[
\ddddr{\Sigma X}{f}{\Sigma \bigvee_{i=1}^k Y_i}{\Sigma E}{\Sigma\Omega\Sigma\bigvee_{i=1}^k Y_i}{\Sigma H_{ij}}{\Sigma\Omega\Sigma Y_i\wedge Y_j}{\mathrm{ev}}{\Sigma Y_i\wedge Y_j}.
\]
\qed
\end{itemize}
\end{proposition}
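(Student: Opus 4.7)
The plan is to convert the question about $f\colon\Sigma X\to\Sigma\bigvee_i Y_i$ into one about its adjoint $\tilde f\colon X\to\Omega\Sigma\bigvee_i Y_i$, apply the Hilton-Milnor decomposition of the target loop space, use connectivity to kill all but a few factors, and then return to the suspension side via the fact that Samelson products loop-adjoint to Whitehead products.

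First I would form the adjoint $\tilde f$ and compose with $\Theta^{-1}$ for the Hilton-Milnor equivalence
\[
\Theta\colon\prod_{\omega\in B}\Omega\Sigma\omega(Y_1,\ldots,Y_k)\stackrel{\simeq}{\lra}\Omega\Sigma(\bigvee_{i=1}^k Y_i)
\]
indexed by a Hall basis $B$ of Lie words in $\iota_{Y_1},\ldots,\iota_{Y_k}$. Projecting to each factor gives a canonical decomposition $\tilde f\simeq (\tilde f_\omega)_{\omega\in B}$ with $\tilde f_\omega\colon X\to\Omega\Sigma\omega(Y_1,\ldots,Y_k)$, and uniqueness of the whole decomposition will follow from the fact that $\Theta$ is a (weak) equivalence.

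Next I would run the connectivity/dimension bookkeeping to see which $\tilde f_\omega$ must vanish. Since every $\Sigma Y_i$ is $r$-connected, every $Y_i$ is $(r-1)$-connected, so an iterated smash of weight $w$ is $(wr-1)$-connected and $\Omega\Sigma\omega$ with $\omega$ of weight $w$ is again $(wr-1)$-connected. As $X$ has CW-dimension $n-1$, obstruction theory makes $\tilde f_\omega$ nullhomotopic whenever $n-1\le wr-1$, i.e.\ whenever $w\ge n/r$. Under hypothesis $[I]$ the inequality $n\le 2r$ forces $w=1$; under hypothesis $[II]$ the inequality $n\le 3r$ allows only $w\in\{1,2\}$.

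Finally I would identify the surviving components with the maps in the statement. For $w=1$ the word $\omega=Y_i$ gives a factor $\Omega\Sigma Y_i$ whose inclusion into $\Omega\Sigma\bigvee Y_j$ is $\Omega\Sigma\iota_i$ and adjoints to $\iota_i$; the corresponding projection of $\tilde f$ unwinds via the adjunction to $p_i f$, giving the summand $\iota_i p_i f$. For $w=2$ the Hall word $\omega=[Y_i,Y_j]$ corresponds to the Samelson product $[\iota_i,\iota_j]\colon Y_i\wedge Y_j\to\Omega\Sigma\bigvee Y_j$, which suspends-adjoints to the Whitehead product $[\iota_i,\iota_j]\colon\Sigma Y_i\wedge Y_j\to\Sigma\bigvee Y_j$, while the corresponding projection of $\tilde f$ is, by construction, precisely the Hopf-Hilton invariant $H_{ij}$ and so its suspension is the map $M_{ij}(f)$ defined in the statement. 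Reassembling using the loop multiplication of $\Omega\Sigma\bigvee Y_j$ and passing back through the adjunction converts the multiplicative product in the Hilton-Milnor decomposition into the co-$H$ sum on the suspension side, yielding the formula displayed in the proposition.

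The main obstacle I expect is this last step: one has to check carefully that the multiplicative reassembly in $\Omega\Sigma\bigvee Y_j$, when post-adjoined, becomes a genuine sum on $[\Sigma X,\Sigma\bigvee Y_j]$ (possible here because the target is a suspension and hence a co-$H$-space, so the right-distributivity issue mentioned in the introduction does not bite), and that under this identification the weight-two Samelson-product factors line up with the Whitehead products $[\iota_i,\iota_j]$ precomposed with $M_{ij}(f)$. The connectivity bound itself is straightforward; the bulk of the work is tracking adjoints and signs through Hilton-Milnor.
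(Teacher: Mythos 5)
Your argument is correct and is exactly the intended one: the paper offers no written proof (it states the proposition as an immediate corollary of the Hilton--Milnor theorem), and the standard justification is precisely your route of adjointing to $\Omega\Sigma(\bigvee Y_i)$, discarding the Hall-word factors of weight $\geq 2$ (resp.\ $\geq 3$) by the connectivity count $n\leq wr$, and translating the surviving weight-one and weight-two components back through the adjunction into $\iota_i p_i f$ and $[\iota_i,\iota_j]M_{ij}(f)$. Your closing caveat is also handled correctly: the loop multiplication corresponds to the co-$H$ sum on $[\Sigma X,-]$, and the multiplicative extension of the Samelson product is the loop of the Whitehead product, so the reassembly is legitimate.
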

It is easy to see that $M_{ij}\colon [\Sigma X, \Sigma\bigvee_{i=1}^k Y_i ]\lra[\Sigma X,\Sigma Y_i\wedge Y_j]$ is a homomorphism.

Prompted by the decomposition of the map $f$ in the previous proposition, we shall introduce some notions which will be used throughout the paper.

For a map $f\colon X\lra Y=\bigvee_{i=1}^k Y_i$, the part of the homotopy invariants of $f$
\[
p_if\quad \text{ for } 1\leq i\leq k
\]
will be called the {\it first order homotopy invariants of} $f$, while  we shall refer to
\[
M_{ij}(f)\quad \text{ for } 1\leq i<j\leq k
\]
as the {\it second order homotopy invariants of} $f$.

If $f$ satisfies condition [I] of Proposition~\ref{HM}, that is, if $n\leq 2r$, we shall say that $f$ is of {\it type [I]}. If $f$ satisfies condition [II] of Proposition~\ref{HM}, that is, if $n\leq 3r$, we shall say that $f$ is of {\it type [II]}.
\begin{proposition}
\label{htpyinv}
Let $X\in\J_n$ with a top cell attaching map
\[
\alpha\colon S^{2n-2}\lra \bar X\simeq \bigvee_{i=1}^k {(S^{n-1}_i\vee S^n_{i+k})}.
\]
If $n>3$, then $\alpha$ decomposes in the following way
\[
\alpha=\sum^{2k}_{i=1}\iota_ip_i\alpha +\sum_{1\leq i<j\leq k}m^\alpha_{ij}[\iota_i, \iota_j]\eta +\sum^k_{i=1}[\iota_i, \iota_{i+k}]
\]
where $m^\alpha_{ij}$ takes values 0 or 1 and $\eta\colon S^{2n-2}\lra S^{2n-3}$ is the Hopf map.
\end{proposition}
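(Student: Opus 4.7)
The plan is to apply the Hilton-Milnor decomposition of Proposition~\ref{HM} to $\alpha$, identify each second-order Hilton-Milnor invariant from dimensional considerations, and then pin down the integer coefficients using the cup-product normalisation of Lemma~\ref{heq}(2).

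Writing $\bar X \simeq \Sigma W$ with $W = \bigvee_{i=1}^k(S^{n-2}_i \vee S^{n-1}_{i+k})$, each wedge summand of $\bar X$ is at least $(n-2)$-connected, so one takes $r = n-2$ in Proposition~\ref{HM}. The hypothesis $n>3$ is precisely the inequality $2n-2 \leq 3(n-2) = 3r$, so Proposition~\ref{HM}[II] applies to the $2k$ wedge summands and gives
\[
\alpha = \sum_{i=1}^{2k}\iota_i p_i\alpha + \sum_{1\leq i<j\leq 2k}[\iota_i,\iota_j]\,M_{ij}(\alpha).
\]
All basic Hilton products of weight at least $3$ have source of dimension at least $3n-5$, and $\pi_{2n-2}(S^m)=0$ whenever $m\geq 3n-5 > 2n-2$, so such contributions vanish.

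Next I would read off each $M_{ij}(\alpha)\colon S^{2n-2}\to \Sigma Y_i\wedge Y_j$ by computing the dimension of the target. For $1\leq i<j\leq k$ the target is $S^{2n-3}$, and $\pi_{2n-2}(S^{2n-3})\cong\Z/2$ is generated by the Hopf map, whence $M_{ij}(\alpha) = m^\alpha_{ij}\,\eta$ with $m^\alpha_{ij}\in\{0,1\}$. For $1\leq i\leq k<j\leq 2k$ the target is $S^{2n-2}$ and $\pi_{2n-2}(S^{2n-2})=\Z$, so $M_{ij}(\alpha) = c_{ij}\,\Id$ for some integer $c_{ij}$. For $k<i<j\leq 2k$ the target is $S^{2n-1}$ and $\pi_{2n-2}(S^{2n-1})=0$, so the term vanishes.

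It remains to pin down the integers $c_{ij}$ using the cofibration $\bar X\to X\to S^{2n-1}$ obtained by attaching the top cell along $\alpha$. The classical dictionary between basic Whitehead-product coefficients in a top-cell attaching map and cup products in the resulting Poincar\'e complex asserts that $c_{i,j+k}$ equals the integer for which $e^{n-1}_i\cup e^n_j = c_{i,j+k}\cdot e^{2n-1}$ in $H^{2n-1}(X;\Z)$. By Lemma~\ref{heq}(2) this integer is $\delta_{ij}$, so the mixed contribution collapses to $\sum_{i=1}^k[\iota_i,\iota_{i+k}]$, matching the claimed decomposition.

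The main obstacle is justifying the Whitehead-to-cup-product dictionary invoked in the final step. I would verify it by running the long exact cohomology sequence of the cofibration $\bar X\to X\to S^{2n-1}$, identifying the connecting homomorphism with the dual of $\Sigma\alpha$, and then applying the $\omega$-Hopf-Hilton invariant $H_{ij}$ (as defined in Section 3.1) to $\Sigma\alpha\colon S^{2n-1}\to\Sigma\bar X$ to read off the pairing of $e^{n-1}_i\cup e^n_j$ with the top class as exactly $c_{i,j+k}$.
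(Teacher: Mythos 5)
Your proposal is correct and follows essentially the same route as the paper: apply the Hilton--Milnor decomposition of Proposition~\ref{HM}, read off each $M_{ij}(\alpha)$ from $\pi_{2n-2}$ of the target sphere ($\Z/2\cdot\eta$ for $i<j\leq k$, $\Z$ in the mixed case, $0$ for $k<i<j$), and identify the mixed coefficients with the cup product matrix, which is $I_k$ by Lemma~\ref{heq}(2). The only minor difference is how the Whitehead-product/cup-product dictionary is justified: the paper observes that the mapping cone of $[\iota_i,\iota_{j}]$ is $S^{n-1}\times S^{n}$ and compares top classes, whereas you propose the equivalent argument via the long exact sequence of the cofibration; both establish the same classical fact.
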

\begin{proof}
Using Proposition~\ref{HM}, the attaching map $\alpha\colon S^{2n-2}\lra \bar X=\bigvee_{i=1}^k {(S^{n-1}_i\vee S^n_{i+k})}$, being of type [II], can be decomposed as
\begin{equation}
\label{alphaspheres}
\alpha=\sum^{2k}_{i=1}\iota_ip_i\alpha +\sum_{1\leq i<j\leq 2k}[\iota_i, \iota_j]M_{ij}(\alpha).
\end{equation}
The second sum can be further decomposed in the following way.

i) Recall that $\pi_{2n-2}(S^{2n-3})\cong\Z/2$ generated by $\eta$. Therefore for $1\leq i<j\leq k$, the map $M_{ij}(\alpha)\colon S^{2n-2}\lra\Sigma S^{n-2}\wedge S^{n-2}$ is $M_{ij}(\alpha)=m_{ij}^\alpha\eta$ where $m_{ij}^\alpha\in\Z/2$.

ii) For $1\leq i\leq k$ and  $k+1\leq j\leq 2k$, the map $M_{ij}(\alpha)\colon S^{2n-2}\lra\Sigma S^{n-2}\wedge S^{n-1}$ is $M_{ij}(\alpha)=m_{ij}^\alpha\Id_{S^{2n-2}}$ where $m_{ij}^\alpha$ is the cup product $a_{ij}$. By Lemma~\ref{heq}, the cup product $a_{ij}$ is identified by $\delta_{ij}$. To see this, consider the mapping cone $\tilde X=(S^{n-1}_i\vee S^n_{j})\cup_{[\iota_i,\iota_j]}e^{2n-1}=S^{n-1}\times S^{n}$ mapping to $X$. If, as before, $e^{n-1}_i\in H^{n-1}(\bar{X}; \Z)$ denote the generators corresponding to $S^{n-1}$ and $e^n_j\in H^{n}(\bar{X}; \Z)$ denote the generators corresponding to  $S^{n}$, then the cup product $e^{n-1}_i\cup e^n_j$ represents the generator of $H^{2n-1}(\tilde {X}; \Z)$. This shows that the coefficients $m_{ij}^\alpha$  are determined by the cup product matrix $A=(a_{ij})$ which is the identity matrix.

iii) For $k+1\leq i<j\leq 2k$, the map $M_{ij}(\alpha)\colon S^{2n-2}\lra\Sigma S^{n-1}\wedge S^{n-1}$ is trivial for connectivity reasons.

Putting all these three cases together, one proves the proposition.
\end{proof}

In this way, Proposition~\ref{htpyinv} shows that the complete set of homotopy invariants of $\alpha$ is given by
\[
\begin{array}{ll}
p_i\alpha\colon S^{2n-2}\lra S^{n-1} & \text{ for } 1\leq i\leq k\\
p_i\alpha\colon S^{2n-2}\lra S^{n} & \text{ for } k+1\leq i\leq 2k\\
m_{ij}^\alpha & \text{ for } 1\leq i<j\leq k\\
\end{array}
\]
where $m_{ij}^\alpha$ may take values $0$ or $1$.

Note that these invariants of $\alpha$ are independent and for a different choice of invariants the resulting attaching maps $\alpha$ are not homotopic.

\section{Decomposing maps between Poincar\' e complexes}

Our next aim is to study maps between the $(2n-2)$-skeletons of Poincar\' e complexes $X,Y\in\J_n$.
We start with a more general statement which is a direct consequence of the Hilton-Milnor theorem.
\begin{proposition}
\label{3.4}
Let $X=\Sigma\bigvee^k_{i=1} X_i$ be $d$-dimensional $CW$-complex and let $Y=\Sigma\bigvee^m_{s=1} Y_s$ be $r$-connected $CW$-complex.  Let $P\colon \Sigma\bigvee^k_{i=1} X_i\lra\Sigma\bigvee^m_{s=1} Y_s$. If $d\leq 2r$, then
\[
P=\sum^k_{i=1}\sum^m_{s=1}j_s P_{is}p_i
\]
where $P_{is}=q_sP\iota_i\colon\Sigma X_i\lra \Sigma Y_s$, $\iota_i$ and $j_s$ are the corresponding inclusions, while $p_i$ and $q_s$ are corresponding pinch maps.
\end{proposition}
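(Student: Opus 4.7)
The plan is to combine two decomposition principles. The first exploits the wedge structure of the source $X = \Sigma \bigvee_{i=1}^k X_i \simeq \bigvee_{i=1}^k \Sigma X_i$ to reconstruct $P$ from its restrictions to the summands. The second applies Proposition~\ref{HM}~[I] to each such restriction, which is available precisely because the hypothesis $d \leq 2r$ matches the type~[I] range. Multiplying the two decompositions together will produce the claimed formula.

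For the first step I would set $Q = \sum_{i=1}^k (P\iota_i) p_i$ and verify $Q\iota_j = P\iota_j$ for every $j$. Since $\iota_j\colon \Sigma X_j \lra \bigvee_i \Sigma X_i$ is itself a suspension, and hence a co-$H$-map, precomposition with $\iota_j$ distributes over the sum, yielding $Q\iota_j = \sum_i (P\iota_i)(p_i \iota_j)$. Because $p_i \iota_j$ is null homotopic for $i \neq j$ and the identity for $i = j$, all but the diagonal term vanish and $Q\iota_j = P\iota_j$. Equality on every summand, together with the universal property $[\bigvee_i \Sigma X_i, Y] \cong \prod_i [\Sigma X_i, Y]$, then forces $P = Q$ in the homotopy category.

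For the second step each restriction $P\iota_i\colon \Sigma X_i \lra Y$ has source of dimension at most $d$ and target an $r$-connected wedge of suspensions, so Proposition~\ref{HM}~[I] applies and gives $P\iota_i = \sum_{s=1}^m j_s\, q_s P \iota_i = \sum_{s=1}^m j_s P_{is}$. Substituting into $P = \sum_i (P\iota_i)\, p_i$ and pulling the composition with $p_i$ across the sum produces the required formula $P = \sum_{i,s} j_s P_{is} p_i$.

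The single delicate point is the distributivity of composition through a sum, which fails in general. In both places where it is used — pulling $\iota_j$ through the sum defining $Q$, and pulling $p_i$ through the sum coming from Proposition~\ref{HM}~[I] — the rescue is the same: $\iota_j$ and $p_i$ are suspensions of the inclusion and pinch of a wedge, so they are genuine co-$H$-maps, and precomposition with a co-$H$-map is a group homomorphism on homotopy sets. Once this observation is in place the remainder is bookkeeping, and I do not anticipate any further obstacle.
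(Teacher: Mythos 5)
Your argument is correct and is essentially the filled-in version of the paper's own one-line proof, which likewise reduces the statement to a direct application of Proposition~\ref{HM}~[I] for the target wedge combined with the standard decomposition of a map out of a wedge along its summands. Your explicit justification of right distributivity via the co-$H$ structure of $\iota_j$ and $p_i$ is exactly the point the paper leaves implicit, so there is nothing to add.
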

\begin{proof}
A proof is obtained as a direct application of Proposition~\ref{HM}, noting that the map $P$ is of type [I].
\end{proof}

\subsection{The Hopf-Hilton invariants}

For $d$-dimensional $CW$-complex $\Sigma U$ and $r$-connected $CW$-complex $\Sigma W$ such that $d\leq 3r$, consider a map $\alpha\colon \Sigma U\lra\Sigma W$ of type [II] and denote by $i^1, i^2\colon \Sigma W\lra\Sigma W\vee\Sigma W$ the inclusions to the left and right summands. Then their sum $i^1+i^2\colon\Sigma W\lra \Sigma W\vee\Sigma W$ is the standard comultiplication induced by the suspension. The {\it Hopf-Hilton invariant of~$\alpha$} can be defined as
\[
H(\alpha)=M_{12}((i^1+i^2)\alpha)\colon\Sigma U\lra \Sigma W\wedge W
\]
where $M_{12}(f)$ denotes the $(1,2)$-Hilton-Milnor invariant of $f$.

We list some of the properties of $H(\alpha)$ which are generalisations to those proved by Whitehead~\cite[Chapter XI, Section 8]{Whitehead} when $U$ and $W$ are spheres. It is worth noticing that $H(\alpha)$ is an obstruction to the right distributivity law to hold for the map $\alpha$.

\begin{proposition}
Let $\alpha\colon \Sigma U\lra \Sigma W$ be a map of type [II], where $d=\dim \Sigma U$ and $\Sigma W$ is $r$-connected.
\begin{enumerate}
\item If $d\leq 2r$, then $H(\alpha)=0$.
\item If $d\leq 3r$, then $H(H(\alpha))=0$.

\item For any $\beta_1,\beta_2\colon \Sigma W\lra\Sigma Z$, it holds
\[
(\beta_1+\beta_2)\alpha=\beta_1\alpha+\beta_2\alpha+[\beta_1,\beta_2]H(\alpha).
\]
\item If $\beta_i\colon \Sigma W\lra\Sigma Z$ for $1\leq i\leq n$, then
\[
\left( \sum_{i=1}^n\beta_i\right)\alpha=\sum_{i=1}^n\beta_i\alpha+\sum_{1\leq i<j\leq n}[\beta_i, \beta_j]H(\alpha).
\]
\item
For $\beta\colon \Sigma W\lra\Sigma Z$ and $k\in\Z$, it holds
\[
(k\beta)\alpha=k(\beta\alpha)+\binom{k}{2}[\beta,\beta]H(\alpha).
\]
\item Considered as a map
\[
H\colon [\Sigma U,\Sigma W]\lra [\Sigma U,\Sigma W\wedge W]
\]
$H$ is a homomorphism.
\item For $k\in \Z$, $H(\alpha [k])=kH(\alpha)$.
\item For $k\in \Z$, $H([k]\alpha)=k^2 H(\alpha)$.
\end{enumerate}
\end{proposition}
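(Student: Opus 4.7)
The plan is to verify the eight properties in sequence, each relying on those preceding.

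Parts (1) and (2) are immediate from Proposition~\ref{HM}. For (1), when $d \leq 2r$ the auxiliary map $(i^1+i^2)\alpha \colon \Sigma U \to \Sigma W \vee \Sigma W$ is of type [I], so its Hilton--Milnor expansion carries no second-order term and $H(\alpha) = M_{12}((i^1+i^2)\alpha) = 0$. For (2), note that $\Sigma W \wedge W$ is $2r$-connected and $d \leq 3r \leq 4r$, so (1) applied to $H(\alpha) \colon \Sigma U \to \Sigma W \wedge W$ gives $H(H(\alpha)) = 0$.

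Part (3) is the heart of the proposition. Applying Proposition~\ref{HM}[II] to $(i^1+i^2)\alpha$ and using $p_j(i^1+i^2) = \Id$ produces
\[
(i^1+i^2)\alpha = i^1\alpha + i^2\alpha + [i^1, i^2]H(\alpha).
\]
Postcompose with the map $(\beta_1, \beta_2) \colon \Sigma W \vee \Sigma W \to \Sigma Z$ that restricts to $\beta_j$ on the $j$-th summand. Since $\beta_1 + \beta_2 = (\beta_1, \beta_2) \circ (i^1 + i^2)$, combining left distributivity of postcomposition over sums in the domain with the naturality identity $(\beta_1, \beta_2) \circ [i^1, i^2] = [\beta_1, \beta_2]$ yields (3). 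Parts (4) and (5) are direct corollaries: (4) follows by induction on $n$ from (3), and (5) is (4) specialised to $\beta_i = \beta$. For (6), $H$ is the composite of precomposition by $(i^1+i^2)$ (additive by left distributivity) with the homomorphism $M_{12}$, hence is itself a homomorphism. For (7), write $[k]$ on $\Sigma U$ as $\Id + \cdots + \Id$; left distributivity gives $\alpha \circ [k] = k\alpha$, and (6) delivers $H(\alpha[k]) = kH(\alpha)$.

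The main obstacle is (8), where the quadratic dependence must be produced from the bilinear Whitehead product. Property (5) with $\beta = \Id_{\Sigma W}$ yields $[k]\alpha = k\alpha + \binom{k}{2}[\Id, \Id]H(\alpha)$, and applying $H$ via (6) reduces the claim (since $k + k(k-1) = k^2$) to the key identity
\[
H([\Id, \Id]\gamma) = 2\gamma \qquad \text{for every } \gamma \colon \Sigma U \to \Sigma W \wedge W.
\]
For this identity, naturality of the Whitehead product rewrites $(i^1+i^2) \circ [\Id, \Id] = [i^1+i^2, i^1+i^2]$, and bilinear expansion produces $[i^1, i^1] + [i^1, i^2] + [i^2, i^1] + [i^2, i^2]$. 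The self-products $[i^j, i^j] = i^j \circ [\Id, \Id]$ factor through the $j$-th wedge summand via $i^j$ and so contribute trivially to $M_{12}$; the two cross terms combine, under the Hall basis identification used to define $M_{12}$, to yield a coefficient of $2$ for $[i^1, i^2]$. Postcomposing with $\gamma$ and applying $M_{12}$ delivers $2\gamma$, completing (8).
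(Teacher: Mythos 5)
Your arguments for parts (1)--(4), (6) and (7) follow the paper's proof essentially step for step. Two small omissions: in the induction for (4) you need $\bigl[\sum_{i<n}\beta_i,\beta_n\bigr]H(\alpha)=\sum_{i<n}[\beta_i,\beta_n]H(\alpha)$, which uses that $H(\alpha)$ distributes from the right, i.e.\ part (2) -- the paper flags this explicitly; and in (5), specialising (4) to $\beta_i=\beta$ only proves the case $k\geq 0$. Precomposition with $\alpha$ is not additive, so the negative case needs its own argument; the paper derives $(-\beta)\alpha=-(\beta\alpha)+[\beta,\beta]H(\alpha)$ from $0=(\beta-\beta)\alpha$ and then treats $-k$ for $k\geq 2$. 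These are fillable, but should be filled.

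The genuine gap is in (8). Your reduction -- apply $H$ to $[k]\alpha=k\alpha+\binom{k}{2}[\Id,\Id]H(\alpha)$ and invoke $H([\Id,\Id]\gamma)=2\gamma$ -- rests on a cross-term computation that is not correct. Under the Hall-basis identification, $[i^2,i^1]$ is \emph{not} $[i^1,i^2]$: graded commutativity gives $[i^2,i^1]=\pm[i^1,i^2]\circ\Sigma T$, where $T\colon W\wedge W\lra W\wedge W$ is the factor swap, so the two cross terms contribute $\gamma\pm(\Sigma T)_*\gamma$, not $2\gamma$. The identity really does fail in general: for $\Sigma W=S^n$ one has $H([\Id_{S^n},\Id_{S^n}])=\pm\bigl(1+(-1)^n\bigr)$, which is $0$ rather than $2$ when $n$ is odd -- precisely the parity that occurs for the wedge summands $S^{n-1}$, $S^n$ in this paper. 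Rescuing your route would require the symmetry statement $(\Sigma T)_*H(\alpha)=\pm H(\alpha)$ with the correct sign, which is essentially as hard as (8) itself. The paper avoids all of this: it computes the $M_{12}$-invariant of $(i^1+i^2)[k]\alpha$ in two ways -- once from the defining decomposition of $[k]\alpha$, once by expanding $(ki^1+ki^2)\alpha$ via (3) -- and needs only the bilinearity $[ki^1,ki^2]=k^2[i^1,i^2]$, right distributivity through $H(\alpha)$, and uniqueness of the Hilton--Milnor decomposition. You should replace your argument for (8) with that computation, or else prove the missing symmetry of $H(\alpha)$ independently.
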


\begin{proof}
\begin{enumerate}
\item For dimensional reasons, since $\dim \Sigma U$ is less or equal to the connectivity of $\Sigma W\wedge W$, we have that $H(\alpha)\colon \Sigma U\lra \Sigma W\wedge W$ is trivial.
\item If $\alpha$ is of type [II], then $H(\alpha)$ is of type [I] and the statement follows by property (1).
\item $(\beta_1+\beta_2)\alpha=\nabla (\beta_1\vee \beta_2)(i^1+i^2)\alpha=\nabla(\beta_1\vee \beta_2)(i^1\alpha+i^2\alpha+[i^1,i^2]H(\alpha))=\beta_1\alpha+\beta_2\alpha+[\beta_1,\beta_2]H(\alpha)$, where $\nabla$ is the fold map.
\item  The proof is by induction on the number of summands. Property (3) establishes the base of mathematical induction. Noting that $H(\alpha)$ distributes from the right, as a consequence of properties (2) and (3), we have
\[
\left( \sum_{i=1}^{n-1}\beta_i+\beta_n\right)\alpha=(\sum_{i=1}^{n-1}\beta_i)\alpha+\beta_n\alpha+[\sum_1^{n-1}\beta_i, \beta_n]H(\alpha)
\]
\[
= \sum_{i=1}^n\beta_i\alpha+\sum_{1\leq i<j\leq n-1}[\beta_i, \beta_j]H(\alpha)+\sum_{i=1}^{n-1}[\beta_i,\beta_n]H(\alpha)= \sum_{i=1}^n\beta_i\alpha+\sum_{1\leq i<j\leq n}[\beta_i, \beta_j]H(\alpha)
\]
\item For positive $k$, the statement is a direct corollary of property (4). Applying properties (2) and (3) to $H(\alpha)$, we have
\[
(\beta_1+\beta_2)H(\alpha)=\beta_1 H(\alpha) +\beta_2 H(\alpha)
\]
and therefore
\[
(-\beta)H(\alpha)=-(\beta H(\alpha)).
\]
Now
\[
0=(\beta -\beta)\alpha=\beta \alpha + (-\beta)\alpha + [\beta , -\beta]H(\alpha)
\]
and therefore
\[
(-\beta)\alpha = -(\beta \alpha) + [\beta , \beta]H(\alpha )
\]
which proves the statement for $k=-1$.

For $k\geq 2$, we get
\[
(-k\beta)\alpha=-((k\beta)\alpha)+[k\beta,k\beta]H(\alpha)=-(k(\beta\alpha)+\binom{k}{2}[\beta,\beta]H(\alpha))+k^2[\beta,\beta]H(\alpha)
\]
\[
=-k(\beta\alpha)+\binom{-k}{2}[\beta, \beta] H(\alpha).
\]

\item For $\alpha,\beta\in [\Sigma U,\Sigma W]$, we have
\[
H(\alpha+\beta)=M_{12}((i^1+i^2)\alpha +(i^1+i^2)\beta)=M_{12}((i^1+i^2)\alpha)+M_{12}((i^1+i^2)\beta)=H(\alpha)+H(\beta).
\]
\item Since $\alpha[k]=k\alpha$, the property follows readily as $H$ is a homomorphism.
\item $[i^1,i^2]H([k]\alpha)=(ki^1+ki^2)\alpha-(ki^1)\alpha-(ki^2)\alpha=[ki^1,ki^2]H(\alpha)=(k^2[i^1,i^2])H(\alpha)=[i^1,i^2](k^2H(\alpha))$. Now using the uniqueness of the Hilton-Milnor decomposition, $H([k]\alpha)=k^2 H(\alpha)$ for $k\in \Z$.
\end{enumerate}
\end{proof}

Note that for $\alpha\colon S^{2r-1}\lra S^r$, we have
$H(\alpha)=H_0(\alpha)\Id_{S^{2r-1}}$ where $H_0(\alpha)\in\Z$ is the Hopf invariant of $\alpha$.

\subsection{The Hopf-Hilton invariant of a map with a wedge sum codomain}

For the remaining of the section, let us assume that $\alpha\colon S^{2n-2}\lra \Sigma \bigvee_{i=1}^{k}X_i$ is of type [II].
Then
\[
H(\alpha)\colon S^{2n-2}\lra \Sigma ( \bigvee_{i=1}^{k}X_i)\wedge ( \bigvee_{i=1}^{k}X_i)=\Sigma  \bigvee_{i,j=1}^{k}(X_i\wedge X_j)
\]
is of type [I].
Therefore, Proposition~\ref{3.4} implies that
\[
H(\alpha)=\sum^{k}_{i,j=1}\iota_{ij}H_{ij}(\alpha)
\]
where $H_{ij}(\alpha)=p_{ij}H(\alpha)\colon S^{2n-2}\lra\Sigma X_i\wedge X_j$, the map $p_{ij}\colon \Sigma\bigvee_{i,j=1}^{k}(X_i\wedge X_j) \lra \Sigma X_i\wedge X_j$ is the pinch map, and $\iota_{ij}\colon\Sigma X_i\wedge X_j\lra\Sigma  \bigvee_{i,j=1}^{k}(X_i\wedge X_j)$ is the inclusion.

\begin{proposition}
\label{Hij}
For $H_{ij}(\alpha)$ the following holds.
\begin{enumerate}
\item $H_{ij}(\alpha)=M_{ij}(\alpha)$ for $i<j$.
\item Let $X_i$ and $X_j$ be spheres. Then \[H_{ji}(\alpha)=(-1)^{\dim(\Sigma X_i)\dim(\Sigma X_j)}H_{ij}(\alpha)\] for $i<j$.
\item $H_{ii}(\alpha)=H(p_i\alpha)$, where $p_i\colon \Sigma\bigvee_{i=1}^k X_i\lra \Sigma X_i$ is the pinch map.
\end{enumerate}
\end{proposition}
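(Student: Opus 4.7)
The plan is to derive all three identities from the naturality of the comultiplication $\nabla_0=i^1+i^2\colon\Sigma W\lra\Sigma W\vee\Sigma W$ and of the Hilton-Milnor invariant $M_{12}$, combined with the antisymmetry of Samelson products. The two naturality facts I will use are $(\Sigma f\vee\Sigma f)\circ\nabla_0=\nabla_0\circ\Sigma f$ for any based map $f\colon V\lra W$, and $M_{12}((\Sigma g\vee\Sigma g)\circ\beta)=(\Sigma g\wedge\Sigma g)\circ M_{12}(\beta)$; both follow from the functoriality of the Hilton-Milnor splitting on basic-product factors.

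For part~(3) I apply these with the pinch $p_i\colon W\lra X_i$, obtaining
\[
H(p_i\alpha)=M_{12}(\nabla_0 p_i\alpha)=M_{12}((p_i\vee p_i)\nabla_0\alpha)=(p_i\wedge p_i)M_{12}(\nabla_0\alpha)=(p_i\wedge p_i)H(\alpha).
\]
Under the splitting $W\wedge W\simeq\bigvee_{k,\ell}X_k\wedge X_\ell$ the map $p_i\wedge p_i$ is the identity on the $(i,i)$-summand and trivial on every other summand, so the right hand side is $p_{ii}H(\alpha)=H_{ii}(\alpha)$.

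For part~(1) I proceed in the same way with the collapse $q_{ij}\colon W\lra X_i\vee X_j$ that retains only the $i$-th and $j$-th summands, obtaining $H(q_{ij}\alpha)=(q_{ij}\wedge q_{ij})H(\alpha)$, whose $(1,2)$-projection in the two-wedge target is $H_{ij}(\alpha)$. The same naturality applied to $M_{12}$ gives $M_{12}(q_{ij}\alpha)=M_{ij}(\alpha)$, so the problem reduces to the two-wedge identity $H_{12}(\gamma)=M_{12}(\gamma)$ for $\gamma\colon S^{2n-2}\lra\Sigma X_i\vee\Sigma X_j$. Expanding $\gamma=\iota_1 p_1\gamma+\iota_2 p_2\gamma+[\iota_1,\iota_2]M_{12}(\gamma)$ (the higher-order terms vanish for dimensional reasons) and applying $\nabla_0$, I use Whitehead-product naturality $\nabla_0[\iota_1,\iota_2]=[\nabla_0\iota_1,\nabla_0\iota_2]$ together with the bilinearity of Whitehead products in the suspension range to expand into the Hilton-Milnor basis for the four-wedge $\Sigma W'\vee\Sigma W'$ where $W'=X_i\vee X_j$. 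Only the genuinely cross basic products $[\iota_1^1,\iota_2^2]$ and $[\iota_1^2,\iota_2^1]$ survive the $(1,2)$-projection, and together they contribute exactly $M_{12}(\gamma)$.

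For part~(2) I use the swap $\tilde\tau\colon\Sigma W\vee\Sigma W\lra\Sigma W\vee\Sigma W$ exchanging the two copies. Commutativity of addition in $[\Sigma W,\Sigma W\vee\Sigma W]$ gives $\tilde\tau\circ\nabla_0=\nabla_0$, so $\tilde\tau_*H(\alpha)=H(\alpha)$ after passing through $M_{12}$. In the Hilton-Milnor splitting, $\tilde\tau$ acts on the cross factor $\Omega\Sigma(W\wedge W)$ via the smash swap $\tau\colon W\wedge W\lra W\wedge W$, modulo the Samelson antisymmetry $[i^2,i^1]=-(-1)^{|i^1||i^2|}[i^1,i^2]\circ\tau$. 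Restricted to the $(i,j)$-summand and specialised to spheres the combined sign is $(-1)^{\dim(\Sigma X_i)\dim(\Sigma X_j)}$, and matching $(i,j)$- and $(j,i)$-components yields the claim. The main obstacle is this sign computation, where one must simultaneously track the swap on $\Sigma W\vee\Sigma W$, the antisymmetry of Samelson products, and the smash swap on $W\wedge W$, and verify that all conventions used agree on the nose; a secondary technical point is to justify that $M_{12}$ is natural under $\Sigma g\vee\Sigma g$, which follows from the functoriality of the Hilton-Milnor splitting on basic-product factors.
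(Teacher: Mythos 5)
Your argument is correct, but it travels a genuinely different road from the paper's. The paper derives all three identities from the right-distributivity formula $(\beta_1+\beta_2)\alpha=\beta_1\alpha+\beta_2\alpha+[\beta_1,\beta_2]H(\alpha)$ that it has already established: for (1) it expands $\alpha=(\sum_i\iota_ip_i)\alpha$ and matches the result against the Hilton--Milnor decomposition $\sum_i\iota_ip_i\alpha+\sum_{i<j}[\iota_i,\iota_j]M_{ij}(\alpha)$, invoking uniqueness; for (2) it compares the two expansions of $(\iota_ip_i+\iota_jp_j)\alpha$ and $(\iota_jp_j+\iota_ip_i)\alpha$, which agree since addition is commutative, and reads off the sign from the graded commutativity of Whitehead products; for (3) it computes $(fp_i+gp_i)\alpha$ in two ways, once as $(f+g)(p_i\alpha)$ and once directly, and specialises to $f=i^1$, $g=i^2$. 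You instead work at the level of the universal example throughout, pushing everything through the naturality of the suspension comultiplication and of the Hilton--Milnor invariant $M_{12}$ under maps of the form $\Sigma g\vee\Sigma g$ and under wedge collapses. This is a clean and uniform viewpoint --- your one-line chain $H(p_i\alpha)=(p_i\wedge p_i)H(\alpha)=H_{ii}(\alpha)$ for (3) is arguably slicker than the paper's double expansion, and your cocommutativity argument $\tilde\tau\nabla_0=\nabla_0$ for (2) isolates exactly where the sign comes from. The price is that the naturality of $M_{12}$ on basic-product factors, which you assert, is a nontrivial input the paper never needs to state (its distributivity properties were proved earlier and do the same work), and your verification of (1) requires an auxiliary expansion in the four-fold wedge $\Sigma W'\vee\Sigma W'$ where one must check that only one of the two cross terms survives the projection onto $X_i\wedge X_j$ --- a check you gesture at but should make explicit, since the other cross term lands in the $X_j\wedge X_i$ summand and dies only after projecting. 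Both you and the paper leave the final sign bookkeeping in (2) at the level of ``graded commutativity of Whitehead/Samelson products,'' so neither is more complete there.
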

\begin{proof}
\begin{enumerate}
\item Being of type [II], the map $\alpha\colon S^{2n-2}\lra \Sigma \bigvee_{i=1}^{2k}X_i$ can be decomposed in the following way
\[
\alpha=(\sum^{k}_{i=1}\iota_ip_i)\alpha=\sum_{i=1}^k\iota_ip_i\alpha+\sum_{1\leq i<j\leq k}[\iota_i,\iota_j]H_{ij}(\alpha).
\]
When this decomposition is compared with the Hilton-Milnor decomposition of $\alpha$, the statement follows.
\item Let $i<j$. Then
\[
(\iota_ip_i+\iota_jp_j)\alpha =\iota_ip_i\alpha+\iota_jp_j\alpha+[\iota_i,\iota_j]H_{ij}(\alpha)
\]
and
\[
(\iota_jp_j+\iota_ip_i)\alpha =\iota_jp_j\alpha+\iota_ip_i\alpha+[\iota_j,\iota_i]H_{ji}(\alpha).
\]
The statement follows by comparing these identities, using the graded commutativity of the Whitehead products and the uniqueness of the Hilton-Milnor decomposition.
\item
For any topological space $Z$ and maps $f,g \colon \Sigma X_i \lra \Sigma Z$, it holds that
\[
(fp_i + gp_i)\alpha = (f+g)p_i\alpha=
fp_i\alpha + gp_i\alpha + [f,g]H(p_i\alpha).
\]

On the other hand,
\[
(fp_i + gp_i)\alpha =fp_i\alpha + gp_i\alpha + [fp_i,gp_i]H(\alpha)=
fp_i\alpha + gp_i\alpha + [f,g]H_{ii}(\alpha)
\]
Specialising to $Z=X \vee X$, $f = i^1$ , $g= i^2$ and using the uniqueness of the Hilton-Milnor theorem, the statement follows.
\end{enumerate}
\end{proof}

We now specialise to the case when all $X_i$'s are spheres.
\begin{proposition}
For $\alpha\colon S^{2n-2}\lra \bar X=\Sigma \bigvee_{i=1}^k(S^{n-2}_i\vee S^{n-1}_{i+k})$ and $n>3$, the following holds.

\begin{enumerate}
\item$H_{ij}(\alpha)=H_{ji}(\alpha)=M_{ij}(\alpha) \text{ for } 1\leq i<j\leq 2k$.
\item $H_{ij}(\alpha)=0$ for $k+1\leq i,j\leq 2k$.
\end{enumerate}
\end{proposition}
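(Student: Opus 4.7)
The plan is to reduce both statements to Proposition~\ref{Hij} together with straightforward dimension and parity bookkeeping. After taking $\Sigma X_i=S^{n-1}$ for $1\le i\le k$ and $\Sigma X_i=S^n$ for $k+1\le i\le 2k$, each smash $\Sigma X_i\wedge X_j$ is a sphere of dimension $2n-3$, $2n-2$, or $2n-1$ according to whether both, exactly one, or none of $i,j$ lies in $\{1,\dots,k\}$. Consequently every relevant $H_{ij}(\alpha)$ lives in an explicit sphere homotopy group: $\pi_{2n-2}(S^{2n-3})\cong\Z/2$, $\pi_{2n-2}(S^{2n-2})\cong\Z$, or $\pi_{2n-2}(S^{2n-1})=0$.

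For part (1), the identity $H_{ij}(\alpha)=M_{ij}(\alpha)$ when $i<j$ is immediate from Proposition~\ref{Hij}(1). To obtain the remaining equality $H_{ji}(\alpha)=H_{ij}(\alpha)$ I would apply Proposition~\ref{Hij}(2), which only supplies the relation up to the graded sign $(-1)^{\dim(\Sigma X_i)\dim(\Sigma X_j)}$, and then argue case by case. If $i,j\le k$, the codomain is $S^{2n-3}$ and the $\Z/2$ coefficient absorbs the sign. If exactly one of $i,j$ exceeds $k$, the product of dimensions equals $n(n-1)$, which is always even, so the sign is already $+1$. If $i,j>k$, the codomain is $S^{2n-1}$ and $\pi_{2n-2}(S^{2n-1})=0$ forces both sides to vanish.

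Part (2) reuses the same case split for the off-diagonal entries: when $k+1\le i<j\le 2k$, part~(1) identifies $H_{ij}(\alpha)$ with $M_{ij}(\alpha)\in\pi_{2n-2}(S^{2n-1})=0$, and Proposition~\ref{Hij}(2) then forces $H_{ji}(\alpha)=0$ as well. The diagonal case $i=j$ is handled by Proposition~\ref{Hij}(3): it gives $H_{ii}(\alpha)=H(p_i\alpha)$ for $p_i\alpha\colon S^{2n-2}\to S^n$, and this Hopf--Hilton invariant lands in $\pi_{2n-2}(\Sigma S^{n-1}\wedge S^{n-1})=\pi_{2n-2}(S^{2n-1})=0$. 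Equivalently, property~(1) of the Hopf--Hilton invariant applies because $\dim S^{2n-2}=2n-2=2(n-1)$ equals twice the connectivity of $S^n$.

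The main obstacle is not mathematical but combinatorial: one must carefully distinguish the two ranges $1\le i\le k$ and $k+1\le i\le 2k$, and verify in each case that the sign in Proposition~\ref{Hij}(2) either is forced to $+1$ by parity or operates invisibly on a target group that is $2$-torsion or trivial.
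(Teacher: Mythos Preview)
Your proof is correct and follows essentially the same route as the paper: both reduce to Proposition~\ref{Hij} together with a case split on whether the target sphere is $S^{2n-3}$, $S^{2n-2}$, or $S^{2n-1}$, and then use $2$-torsion, parity, or triviality of $\pi_{2n-2}(S^{2n-1})$ to kill the sign. Your treatment of the mixed case $i\le k<j$ via the observation that $n(n-1)$ is always even is slightly more explicit than the paper, which handles only the equal-dimension cases and leaves that point implicit.
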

\begin{proof}
\begin{enumerate}
\item The statement follows readily from Proposition~\ref{Hij} (1) and (2). Although the sign $(-1)^{\dim(\Sigma X_i)\dim(\Sigma X_j)}$ might in general appear for some $n$ when $\dim(\Sigma X_i)=\dim(\Sigma X_j)$ that does not happen for the following reasons. On the one hand, if $\dim(\Sigma X_i)=\dim(\Sigma X_j)=n-1$, then $H_{ij}(\alpha)\in\Z/2$, so the sign does not matter. On the other hand, if $\dim(\Sigma X_i)=\dim(\Sigma X_j)=n$, then $H_{ij}(\alpha)\colon S^{2n-2}\lra S^{2n-1}$ is trivial.
\item As $H_{ij}(\alpha)\colon S^{2n-2}\lra S^{2n-1}$ for connectivity reasons, $H_{ii}(\alpha)=0$.
\end{enumerate}
\end{proof}

\subsection{The set of homotopy invariants of $P\circ\alpha$ }
In this section we denote by $\bar X=\Sigma \bigvee_{i=1}^k (S^{n-2}_i\vee S^{n-1}_{i+k})$, $\bar Y=\Sigma \bigvee^m_{s=1}(S^{n-2}_s\vee S^{n-1}_{s+m})$ and a map between them by $P\colon \bar X\lra \bar Y$. As before, the attaching map of the top cell in $X$ is $\alpha\colon S^{2n-2}\lra\bar X$.

By Proposition~\ref{3.4}, we can decomposed $P$ as
\begin{equation}
\label{decomP}
P=\sum_{i= 1}^{ 2k}\sum_{s=1}^{2m}j_sP_{is}p_i
\end{equation}
where $P_{is}=q_sP\iota_i\colon \Sigma X_i\lra \Sigma Y_s$ for $1\leq i\leq 2k$, $1\leq s\leq 2m$, $\Sigma X_i=S^{n-1}_i$  for $1\leq i\leq k$ and $\Sigma X_i=S^n_{i-k}$ for $k+1\leq i\leq 2k$ and analogously for $\Sigma Y_s$.

Notice that the group $[\Sigma X_i, \Sigma Y_s]$  is cyclic and generated by
\[
\left\{\begin{array}{llll}
\Id_{S^{n-1}} & 1\leq i\leq k, & 1\leq  s\leq m & \text{of infinite order }\\
0 & 1\leq i\leq k, & m+1\leq s\leq 2m &\\
\Id_{S^n} & k+1\leq i\leq 2k, & m+1\leq s\leq 2m & \text {of infinite order }\\
\eta & k+1\leq i\leq 2k, & 1\leq s\leq m& \text{of order 2.}\\
\end{array}\right.
\]
If we denote the generator of $[\Sigma X_i, \Sigma Y_s]$ by $\xi_{is}$, then $P_{is}=a_{is}\xi_{is}$ for some $a_{is}\in \Z$. Note that $P_{is}\colon\Sigma X_i\lra \Sigma Y_s$ can be desuspended, that is, there is $\bar P_{is}\colon X_i\lra Y_s$ such that $P_{is}=\Sigma \bar P_{is}$. Denote by $\bar \xi_{is}$ the desuspension of $\xi_{is}$. Using this notation, the map $P$ can be rewritten as
\begin{equation}
\label{mapP}
P=\sum^{2k}_{i=1}\sum^{2m}_{s=1}j_sa_{is}\xi_{is} p_i.
\end{equation}

We shall calculate the set of invariants of $P\circ \alpha$ consisting of
\begin{enumerate}
\item the first order homotopy invariants of $P\circ\alpha$, that is, $q_tP\alpha\colon S^{2n-2}\lra\Sigma Y_t$ for $1\leq t\leq 2m$ and
\item the second order homotopy invariants of $P\circ\alpha$, that is,
$M_{st}(P\circ\alpha)$ for $1\leq s<t\leq 2m$.
\end{enumerate}

We start by identifying the first order homotopy invariants of $P\circ\alpha$. Fix $t\in\{1,\ldots, 2m\}$. Then
\[
q_tP\alpha=\left( \sum_{i=1}^{2k}P_{it}p_i\right)\alpha=\sum^{2k}_{i=1}P_{it}p_i\alpha+\sum_{1\leq i<j\leq 2k}[P_{it}p_i, P_{jt}p_j]H(\alpha)=
\]
\[
\sum^{2k}_{i=1}P_{it}p_i\alpha+\sum_{1\leq i<j\leq 2k}[\Id_{\Sigma Y_t},\Id_{\Sigma Y_t}]\Sigma(\bar {P}_{it}\wedge \bar{P}_{jt})\Sigma (\bar p_i\wedge\bar p_j)H(\alpha)=
\]
\[
\sum^{2k}_{i=1}P_{it}p_i\alpha+[\Id_{\Sigma Y_t},\Id_{\Sigma Y_t}]\left(\sum_{1\leq i<j\leq 2k}a_{it}a_{jt}\Sigma(\bar\xi_{it}\wedge\bar \xi_{jt})H_{ij}(\alpha)\right).
\]

To calculate $\Sigma(\bar{\xi}_{it}\wedge\bar{\xi}_{jt})$, we consider two cases.
\begin{itemize}
\item[(a)]
Let $1\leq t\leq m$ and therefore $\Sigma Y_t=S^{n-1}_t$. There are three cases.
\begin{enumerate}
\item For $1\leq i<j\leq k$, $\Sigma X_i=\Sigma X_j=S^{n-1}$ and therefore $\xi_{it}=\xi_{jt}=\Id_{S^{n-1}}$ and
\[
\Sigma(\bar \xi_{it}\wedge\bar \xi_{jt} )=\Sigma (\Id_{S^{n-2}}\wedge\Id_{S^{n-2}})=\Id_{S^{2n-3}}.
\]
\item For $1\leq i\leq k, k+1\leq j\leq 2k$, $\Sigma X_i=S^{n-1}, \Sigma X_j=S^n$ and therefore $\xi_{it}=\Id_{S^{n-1}}$ and $\xi_{jt}=\eta$. Thus we have
\[
\Sigma(\bar \xi_{it}\wedge\bar \xi_{jt} )=\eta.
\]
\item For $k+1\leq i< j\leq 2k$, $\Sigma X_i=\Sigma X_j=S^n$ and $\xi_{it}=\xi_{jt}=\eta$. Therefore
\[
\Sigma(\bar \xi_{it}\wedge\bar \xi_{jt} )=\Sigma(\eta\wedge\eta)\colon\Sigma S^{n-1}\wedge S^{n-1}\lra \Sigma S^{n-2}\wedge S^{n-2}.
\]
\end{enumerate}

\item[(b)] For $m+1\leq t\leq 2m$, $\Sigma Y_t=S^n$. There are two cases:
\begin{enumerate}
\item For $1\leq i\leq k$, $\Sigma X_i=S^{n-1}$ and therefore $\xi_{it}=0$, that is,
\[
\Sigma(\bar \xi_{it}\wedge\bar \xi_{jt} )=0.
\]
\item For $k+1\leq i<j\leq 2k$, $\Sigma X_i=\Sigma X_j=S^n$ and therefore $\xi_{it}=\xi_{jt}=\Id_{S^n}$ and
\[
\Sigma(\bar \xi_{it}\wedge\bar \xi_{jt} )=\Id_{S^{2n-1}}.
\]
\end{enumerate}
\end{itemize}

Using the fact that $H_{ij}(\alpha)=M_{ij}(\alpha)$ for $i<j$ and the calculation of $M_{ij}(\alpha)$ in the proof of Proposition~\ref{htpyinv} implies that
\[
\Sigma (\bar \xi_{it}\wedge \bar \xi_{jt})H_{ij}(\alpha)=\left\{\begin{array}{ll}
m_{ij}^\alpha\eta & \text{ for } 1\leq t\leq m, 1\leq i<j\leq k\\
\delta_{i,j-k}\eta & \text{ for } 1\leq t\leq m, 1\leq i\leq k, k+1\leq j\leq 2k\\
0 & \text{ otherwise.}\\
\end{array}\right.
\]
This analsys proves the following proposition.
\begin{proposition}
\label{1storder}
The first order homotopy invariants of $P\circ\alpha$ are given by
\[
q_tP\alpha=\sum^{2k}_{i=1}P_{it}p_i\alpha
+\left(\sum_{1\leq i<j\leq k}m^\alpha_{ij}a_{it}a_{jt}+\sum^k_{i=1}a_{it}a_{k+i t}\right)[\Id_{S^{n-1}},\Id_{S^{n-1}}]\eta
\]
for $1\leq t\leq m$ and
\[
q_tP\alpha=\sum^{2k}_{i=k+1}P_{it}p_i\alpha
\]
for  $m+1\leq t\leq 2m$.\qed
\end{proposition}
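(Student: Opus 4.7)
The plan is to start from the decomposition \eqref{mapP}, namely $P=\sum_{i=1}^{2k}\sum_{s=1}^{2m} j_s a_{is}\xi_{is}p_i$, and post-compose with $q_t$, which collapses the codomain wedge onto its $t$-th summand. Since $q_t j_s=\mathrm{Id}_{\Sigma Y_t}$ when $s=t$ and is null-homotopic otherwise, this gives $q_t P=\sum_{i=1}^{2k}P_{it}p_i$; composing on the right with $\alpha$ yields $q_t P\alpha=\bigl(\sum_{i=1}^{2k} P_{it}p_i\bigr)\alpha$, which is the object to expand.

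Next I would apply property $(4)$ of the preceding proposition on Hopf--Hilton invariants (the right distributivity formula for a map of type $[\mathrm{II}]$) to obtain
\[
q_t P\alpha=\sum_{i=1}^{2k}P_{it}p_i\alpha+\sum_{1\leq i<j\leq 2k}[P_{it}p_i,P_{jt}p_j]\,H(\alpha).
\]
Each bracket factors as $[\mathrm{Id}_{\Sigma Y_t},\mathrm{Id}_{\Sigma Y_t}]\circ\Sigma(\bar P_{it}\wedge\bar P_{jt})\circ\Sigma(\bar p_i\wedge\bar p_j)$ by bilinearity of the Whitehead bracket. Substituting $\bar P_{it}=a_{it}\bar\xi_{it}$ extracts the scalar product $a_{it}a_{jt}$, while the decomposition $H(\alpha)=\sum\iota_{ij}H_{ij}(\alpha)$ together with Proposition \ref{Hij}(1) identifies the relevant $H_{ij}(\alpha)$ with the Hilton--Milnor invariant $M_{ij}(\alpha)$; the latter is tabulated by Proposition \ref{htpyinv}.

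What remains is a bookkeeping case analysis over the four rectangles determined by whether $i\leq k$ or $i>k$ and by whether $t\leq m$ or $t>m$, using the four possible generators $\xi_{it}\in\{\mathrm{Id}_{S^{n-1}},0,\mathrm{Id}_{S^n},\eta\}$ listed before \eqref{mapP}. For $1\leq t\leq m$, the pairs with $1\leq i<j\leq k$ contribute $a_{it}a_{jt}m_{ij}^\alpha[\mathrm{Id}_{S^{n-1}},\mathrm{Id}_{S^{n-1}}]\eta$, the pairs $(i,i+k)$ contribute $a_{it}a_{i+k,t}[\mathrm{Id}_{S^{n-1}},\mathrm{Id}_{S^{n-1}}]\eta$ (via $\Sigma(\mathrm{Id}\wedge\eta)$ acting on the identity class), and all other pairs vanish either because $M_{ij}(\alpha)=0$ or because the smash of their generators is null. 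For $m+1\leq t\leq 2m$, the generators $\xi_{it}$ are zero for $i\leq k$, so every second-order contribution dies and only $\sum_{i=k+1}^{2k}P_{it}p_i\alpha$ survives. Factoring the common bracket--$\eta$ out of the surviving sum yields the two formulas stated. The most delicate step is keeping track of the potentially troublesome $\Sigma(\eta\wedge\eta)$ contribution that would arise in the region $k+1\leq i<j\leq 2k$ with $t\leq m$: it is precisely killed by the vanishing of $M_{ij}(\alpha)$ in that range, and verifying this cancellation together with the case $i\leq k<j$ with $j\neq i+k$ is the principal obstacle.
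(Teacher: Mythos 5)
Your proposal is correct and follows essentially the same route as the paper: expand $q_tP=\sum_i P_{it}p_i$, apply the right-distributivity formula with correction terms $[P_{it}p_i,P_{jt}p_j]H(\alpha)$, factor each bracket through $[\Id_{\Sigma Y_t},\Id_{\Sigma Y_t}]\circ\Sigma(\bar\xi_{it}\wedge\bar\xi_{jt})$, and evaluate $\Sigma(\bar\xi_{it}\wedge\bar\xi_{jt})H_{ij}(\alpha)$ case by case using Proposition~\ref{Hij} and the values of $M_{ij}(\alpha)$ from Proposition~\ref{htpyinv}. The case analysis you flag as delicate (the $\Sigma(\eta\wedge\eta)$ region and the pairs $i\leq k<j$ with $j\neq i+k$) is resolved exactly as you say, by the vanishing of $M_{ij}(\alpha)$ there.
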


We proceed by determining the second order homotopy invariants of $P\circ\alpha$. Using decomposition~\eqref{decomP} for $P$, we have
\[
P\circ\alpha =(\sum^{2k}_{i=1}(\sum^{2m}_{s=1} j_sP_{is}p_i))\alpha=
\]
\[
\sum^{2k}_{i=1}\left( \sum^{2m}_{s=1} j_sP_{is}p_i\alpha+\sum_{1\leq s<t\leq 2m}[j_sP_{is}p_i, j_t P_{it}p_i]H(\alpha)\right)+ \sum_{1\leq i<j\leq 2k}\left( \sum^{2m}_{s=1}\sum^{2m}_{t=1}[j_sP_{is}p_i, j_tP_{jt}p_j]H(\alpha) \right)=
\]
\[
\sum^{2k}_{i=1}\sum^{2m}_{s=1}(j_sP_{is}p_i\alpha)+\sum^{2k}_{i=1}\sum_{1\leq s<t\leq 2m}[j_s, j_t]\Sigma (\bar P_{is}\wedge \bar P_{it})H_{ii}(\alpha) +\sum_{1\leq i<j\leq 2k} \sum^{2m}_{s=1}\sum^{2m}_{t=1}[j_s,j_t]\Sigma (\bar P_{is}\wedge \bar P_{jt})H_{ij}(\alpha).
\]

Notice that
\[
\Sigma (\bar \xi_{is}\wedge \bar \xi_{jt})H_{ij}(\alpha)\colon S^{2n-2}\lra\Sigma Y_s\wedge Y_t=\left\{\begin{array}{ll}
S^{2n-3} & \text{ for } 1\leq s,t\leq m\\
S^{2n-1}& \text{ for } m+1\leq s,t\leq 2m \\
S^{2n-2} & \text{ otherwise}\\
\end{array}\right.
\]
and that the corresponding homotopy groups $[ S^{2n-2}, \Sigma Y_s\wedge Y_t]$ are cyclic. We shall write $H_{ij}(\alpha)=h_{ij}^\alpha\eta^X_{ij}$ where $\eta^X_{ij}$ denotes a generator of $\pi_{2n-2}(\Sigma X_i\wedge X_j)$. Further, denote by $\eta_{st}$ a generator of $\pi_{2n-2}(\Sigma Y_s\wedge Y_t)$ and let
\[
\Sigma(\bar \xi_{is}\wedge\bar \xi_{jt})\eta^X_{ij}=\nu(i,j,s,t,n)\eta_{st}
\]
for some $\nu(i,j,s,t,n)\in\Z$. Using this notation, we have
\begin{equation}
\label{palpha}
P\circ\alpha=\sum^{2k}_{i=1}\sum^{2m}_{s=1}j_sP_{is}p_i\alpha+\sum^{m}_{s=1}\sum_{1\leq i<j\leq 2k} [j_s,j_s]a_{is}a_{js} \nu(i,j,s,s,n)h_{ij}^\alpha\eta_{ss}+
\end{equation}
\[
\sum_{1\leq s<t\leq 2m}[j_s,j_t]\left( \sum^{2k}_{i=1}a_{is}a_{it}h_{ii}^\alpha\nu(i,i,s,t,n)\eta_{st}+ \sum_{1\leq i<j\leq 2k }a_{is}a_{jt}h_{ij}^\alpha\nu(i,j,s,t,n)\eta_{st}+\right.
 \]

\[ \left.\sum_{1\leq i<j\leq 2k }a_{it}a_{js}h_{ij}^\alpha\nu(i,j,t,s,n)(-1)^{\dim \Sigma Y_s\dim \Sigma Y_t}\eta_{ts}
\right).
\]

By a straightforward calculations, one sees that $\nu(i,j,s,t,n)$ is trivial except in the following four type of cases when $\nu(i,j,s,t,n)=1$.

\vspace{5mm}
\begin{tabular}{llllc}  \toprule
$\Sigma X_i$ & $\Sigma X_j$ & $\Sigma Y_s$ & $\Sigma Y_t$ & $\nu(i,j,s,t,n)$\\ \midrule
$S^{n-1}$ & $S^{n-1}$ & $S^{n-1}$ &$S^{n-1}$ & 1\\
$S^{n-1}$ & $S^{n}$ & $S^{n-1}$ & $S^{n-1}$ & 1\\
$S^{n-1}$ & $S^{n}$ & $S^{n-1}$ &$S^{n}$ &1\\
$S^{n}$ &$S^{n-1}$ &$S^{n-1}$ &$S^{n-1}$ &1\\
$S^{n}$ &$S^{n-1}$ &$S^{n}$ &$S^{n-1}$ &1\\ \bottomrule
\hline
\end{tabular}
\vspace{3mm}

By Proposition~\ref{1storder} and the calculated values of $\nu(i,j,s,t,n)$,  the sum of the first two sums in~\eqref{palpha} equals to
\[
\sum^{2m}_{s=1} j_sq_sP\alpha.
\]

Using the facts that
\[
\eta_{st}=\left\{\begin{array}{ll}
\eta & \text{ for } 1\leq s,t\leq m\\
\Id_{S^{2n-2}} & \text{ for } 1\leq s\leq m, m+1\leq t\leq 2m \text{ and  for } 1\leq t\leq m, m+1\leq s\leq 2m\\
0 & \text{ for } m+1\leq s,t\leq 2m\\
\end{array}\right.
\]
and that
\[
h_{ij}^\alpha=\left\{\begin{array}{ll}
m_{ij}^\alpha & \text{ for } 1\leq i<j\leq k\\
1 & \text{ for } 1\leq i\leq k, j=k+i\\
0 &\text{ for } 1\leq i\leq k, k+1\leq j\leq 2k, j\neq k+1\quad\text{ and for } k+1\leq i,j\leq 2k\\
\end{array}\right.\]
together with the uniqueness of the decomposition in Proposition~\ref{HM},  we determine the second order homotopy invariants of $P\circ\alpha$.

Recall that $P_{is}=q_sP\iota_i\colon\Sigma X_i\lra \Sigma Y_s$ belongs to the cyclic group $[\Sigma X_i,\Sigma Y_s]$ generated by $\xi_{is}$ and that $P_{is}=a_{is}\xi_{is}$ for some $a_{is}\in\Z$. Analogously,$ H_{ij}(\alpha)\colon S^{2n-2}\lra\Sigma X_i\wedge X_j$ for $i\leq j$ belongs to a cyclic group and $h^\alpha_{ii}$, $m^\alpha_{ij}$ denote the multiplicity of the generator which gives $H_{ij}(\alpha)$.

\begin{proposition}
\label{2ndorder}
The second order homotopy invariants of $P\circ\alpha$ are given by

\[
M_{st}(P\circ \alpha)=\left\{\begin{array}{l}
\left( \sum^{k}_{i=1}a_{is}a_{it}h_{ii}^\alpha+\sum_{1\leq i<j\leq k}(a_{is}a_{jt}+a_{it}a_{js})m_{ij}^\alpha+\sum_{i=1}^{k}(a_{is}a_{i+k t}+a_{it}a_{i+k s}) \right)\eta\\
\hspace*{4.55cm}\text{ for } 1\leq s<t\leq m\\
(\sum^k_{i=1}a_{is}a_{i+k, t})\Id_{S^{2n-2}}\quad\hspace{.62cm} \text{ for } 1\leq s\leq m, m+1\leq t\leq 2m\\
0\quad\hspace{4cm}
\text{ for } m+1\leq s<t\leq 2m.\\
\end{array}\right.
\]\qed
\end{proposition}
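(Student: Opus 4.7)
The plan is to extract $M_{st}(P\circ\alpha)$ directly from the expansion \eqref{palpha} by applying the uniqueness clause of the Hilton-Milnor decomposition in Proposition~\ref{HM}. The first step is to combine Proposition~\ref{1storder} with \eqref{palpha}: the first sum $\sum_{i,s}j_sP_{is}p_i\alpha$ together with the $[j_s,j_s]$-terms (the second sum in \eqref{palpha}) reassemble into $\sum_{s=1}^{2m}j_s q_s(P\circ\alpha)$, which is precisely the first-order part of the Hilton-Milnor decomposition of $P\circ\alpha$. By the uniqueness of that decomposition, the $[j_s,j_t]$-coefficient in the remaining triple sum over $1\leq s<t\leq 2m$ must equal $M_{st}(P\circ\alpha)$.

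The rest is a bookkeeping exercise on a three-way case split, using the table of non-trivial $\nu$-values alongside the values of $h_{ij}^\alpha$ recorded in Proposition~\ref{htpyinv}: $h_{ij}^\alpha=m_{ij}^\alpha$ for $1\leq i<j\leq k$, $h_{ij}^\alpha=\delta_{i,j-k}$ for $i\leq k<j$, zero otherwise, and $h_{ii}^\alpha$ nonzero only for $1\leq i\leq k$. For $1\leq s<t\leq m$ we have $\eta_{st}=\eta_{ts}=\eta$; the diagonal $(i,i)$ contribution yields $\sum_{i=1}^{k}a_{is}a_{it}h_{ii}^\alpha$, the off-diagonal contribution with $1\leq i<j\leq k$ gives $(a_{is}a_{jt}+a_{it}a_{js})m_{ij}^\alpha$ (the second summand arising from the third internal sum in \eqref{palpha} where $s$ and $t$ are swapped), and the mixed-type rows with $1\leq i\leq k$ and $j=k+i$ contribute $a_{is}a_{k+i,t}+a_{it}a_{k+i,s}$; the sign $(-1)^{\dim\Sigma Y_s\dim\Sigma Y_t}$ in the third internal sum is harmless because $\eta$ has order two. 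For $1\leq s\leq m<t\leq 2m$ one has $\eta_{st}=\Id_{S^{2n-2}}$, and the only table row compatible with $(\Sigma Y_s,\Sigma Y_t)=(S^{n-1},S^n)$ forces $1\leq i\leq k<j\leq 2k$, so combining with $h_{ij}^\alpha=\delta_{i,j-k}$ one obtains $\sum_{i=1}^{k}a_{is}a_{k+i,t}$; the swapped-argument sum $\nu(i,j,t,s,n)$ would require $i>j$ and hence drops out under the constraint $i<j$. Finally for $m+1\leq s<t\leq 2m$, $\Sigma Y_s\wedge Y_t=S^{2n-1}$ and $\pi_{2n-2}(S^{2n-1})=0$, so $\eta_{st}=0$ and $M_{st}(P\circ\alpha)=0$.

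The only delicate point, and the sole obstacle in the argument, is the index bookkeeping across the three internal sums of \eqref{palpha}: one must verify for each range of $(s,t)$ which rows of the $\nu$-table are compatible with the summation constraint $1\leq i<j\leq 2k$, and in the third internal sum one must correctly track the swapped arguments in $\nu(i,j,t,s,n)$. A short row-by-row consultation of the table settles this and directly produces the three stated formulas.
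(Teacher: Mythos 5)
Your proposal is correct and follows essentially the same route as the paper: the paper's own justification for this proposition is precisely the computation culminating in display~\eqref{palpha}, the table of nonzero $\nu(i,j,s,t,n)$, and the listed values of $\eta_{st}$ and $h_{ij}^\alpha$, combined with the uniqueness of the Hilton--Milnor decomposition to read off the $[j_s,j_t]$-coefficients. Your case-by-case bookkeeping (in particular the observation that for $1\leq s\leq m<t\leq 2m$ the swapped term $\nu(i,j,t,s,n)$ vanishes because it would force $i>j$, and that the sign $(-1)^{\dim\Sigma Y_s\dim\Sigma Y_t}$ is absorbed by the order of $\eta$) simply makes explicit the index verification that the paper leaves implicit.
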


\section{Degrees of maps}
In this section, we prove our main result which establishes necessary and sufficient algebraic conditions for existence of map degrees between torsion free $(n-2)$-connected $(2n-1)$-dimensional Poincar\' e complexes. To do so, we make a use of our definition of the degree of a map between spaces of the same dimension and with only one top dimensional cell, that is, between $\T^n$-spaces. Notice that $(n-2)$-connected $(2n-1)$-dimensional Poincar\'e complexes are examples of $\T^{2n-1}$-spaces.
\begin{lemma}
\label{invariantsbetad}
The invariants of $\beta [d]\colon S^{2n-2}\lra\bar Y=\Sigma \bigvee^m_{s=1}(S^{n-2}_s\vee S^{n-1}_{s+m}) $ are
\[
(\beta [d])_s=d\beta_s \quad\text{ for } 1\leq s\leq 2m
\]

\[
M_{st}(\beta [d])=dM_{st}(\beta) \quad\text{ for } 1\leq s<t\leq 2m.
\]
\end{lemma}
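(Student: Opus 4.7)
The proof splits into two parallel computations. For the first identity, note that since $q_s\colon \bar Y\lra \Sigma Y_s$ is a pinch map, $(\beta[d])_s = q_s\circ\beta\circ [d] = \beta_s\circ [d]$. As $S^{2n-2}$ is a co-$H$-space and $\Sigma Y_s$ is a suspension, the set $[S^{2n-2},\Sigma Y_s]$ is an abelian group in which right composition with the degree-$d$ self-map $[d]$ of $S^{2n-2}$ realises multiplication by $d$; hence $\beta_s\circ [d] = d\beta_s$, as required.

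For the second identity, I would invoke property (7) of the proposition on the Hopf-Hilton invariant, namely $H(\alpha[k]) = kH(\alpha)$. Setting $\alpha=\beta$ and $k=d$ yields the equality $H(\beta[d]) = dH(\beta)$ in the abelian group $[S^{2n-2},\Sigma \bar Y\wedge \bar Y]$. By Proposition~\ref{Hij}(1), for $s<t$ we have $M_{st}(\beta) = H_{st}(\beta)$, and $H_{st}(\beta)$ is extracted from $H(\beta)$ by composition with the Hilton-Milnor pinch map $p_{st}$ onto the $\Sigma Y_s\wedge Y_t$ summand. Since $p_{st}$ is a pinch between suspension spaces, post-composition with it is a homomorphism of the relevant homotopy groups, so $M_{st}(\beta[d]) = p_{st}H(\beta[d]) = p_{st}(dH(\beta)) = dp_{st}H(\beta) = dM_{st}(\beta)$.

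The only bookkeeping to watch is that the Hilton-Milnor component-extractions commute with the abelian-group structure on $[S^{2n-2},\Sigma \bar Y\wedge \bar Y]$, which is automatic as each $p_{st}$ lands in a suspension. Consequently, both identities follow immediately from the framework developed in Sections 3 and 4 and no substantial obstacle is present; the statement is really a compatibility check between the two types of invariants of a map and the group structure on homotopy classes out of a co-$H$-space.
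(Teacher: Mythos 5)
Your argument is correct and is essentially the paper's own proof, which simply notes that $\beta[d]=d\beta$ and that all the invariants in question are homomorphisms. Your detour through $H(\beta[d])=dH(\beta)$ and the pinch maps $p_{st}$ for the second identity is valid but not needed, since the paper already records directly (after Proposition~\ref{HM}) that $M_{st}$ itself is a homomorphism on $[S^{2n-2},\bar Y]$.
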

\begin{proof}
The proof follows from the fact that  $\beta [d]=d \beta$ and that all invariants are homomorphisms.
\end{proof}
\begin{theorem}
\label{degree}
Let $X,Y\in\J_n$ for $n>3$. Then there is a map $f\colon X\lra Y$ of degree $d$ if and only if the system of equations
\begin{equation}
\label {1.jednacina}
\sum^k_{i=1}a_{it}(p_i\alpha)+\sum^{2k}_{i=k+1}a_{it}(\eta p_i\alpha)+\left(\sum^k_{i=1}\binom{a_{it}}{2}h_{ii}^\alpha+\sum_{1\leq i<j\leq k}m^\alpha_{ij}a_{it}a_{jt}+\sum^k_{i=1}a_{it}a_{k+i t}\right)[\Id_{S^{n-1}},\Id_{S^{n-1}}]\eta\end{equation} \[=d(q_t\beta)\quad \text{ for }1\leq t\leq m .
\]
\begin{equation}
\label{2.jednacina}
\sum^{2k}_{i=k+1}a_{it}(p_i\alpha)=d(q_t\beta) \quad \text{ for } m+1\leq t\leq 2m 
\end{equation}
\begin{equation}
\sum^k_{i=1}a_{is}a_{it}h_{ii}^\alpha+\sum_{1\leq i<j\leq k}(a_{is}a_{jt}+a_{it}a_{js})m_{ij}^\alpha+\sum_{i=1}^k(a_{is}a_{i+k t}+a_{it}a_{i+k s})\equiv dm^\beta_{st}\quad   (\hspace{-3.5mm}\mod 2)
\end{equation}
for $1\leq s<t\leq m$ 
\begin{equation}
\label{4.jednacina}
\sum_{i=1}^k a_{is}a_{i+k t}=d\delta_{s, t-m} \quad\text{ for } 1\leq s\leq m, m+1\leq t\leq 2m
\end{equation}
has an integral solution $\{ a_{is} \ | \ 1\leq i\leq 2k, 1\leq s\leq 2m\}$.
\end{theorem}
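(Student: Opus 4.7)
The plan is to translate the diagrammatic condition of Lemma~\ref{lemmadegree} into an algebraic system by feeding the Hilton--Milnor decompositions of Sections~3 and~4 into both sides of the equation $P\circ\alpha\simeq d\beta$. I first note that a degree-$d$ map $f\colon X\lra Y$ exists if and only if there is a map $P\colon \bar X\lra \bar Y$ with $P\circ\alpha\simeq \beta\circ[d]=d\beta$: one direction takes $P:=f|_{\bar X}$, and the other extends any such $P$ over the cofibre sequence $S^{2n-2}\xrightarrow{\alpha}\bar X\hookrightarrow X$, producing $f$ of degree $d$ by Lemma~\ref{lemmadegree}. Because $n>3$, Proposition~\ref{3.4} applies to each sphere summand of $\bar X$ and yields the unique decomposition
\[
P=\sum_{i=1}^{2k}\sum_{s=1}^{2m}j_s\,a_{is}\,\xi_{is}\,p_i,\qquad a_{is}\in\Z,
\]
so specifying $P$ up to homotopy amounts to specifying the integer tuple $\{a_{is}\}$.

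Next, I would compute the first-order invariants $q_t(P\alpha)$ and second-order invariants $M_{st}(P\alpha)$ via Propositions~\ref{1storder} and~\ref{2ndorder}, and those of $d\beta$ via Lemma~\ref{invariantsbetad}. Since $\dim S^{2n-2}=2n-2\le 3(n-2)$ for $n\ge 4$, Proposition~\ref{HM}~[II] applies to maps $S^{2n-2}\lra\bar Y$, and the uniqueness clause in the Hilton--Milnor theorem implies that $P\alpha\simeq d\beta$ if and only if the corresponding first- and second-order invariants agree. Unpacking this matching in the four ranges $1\le t\le m$, $m+1\le t\le 2m$, $1\le s<t\le m$, and $1\le s\le m<t\le 2m$ produces the four equations stated in the theorem.

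The bookkeeping is most delicate in the first equation. Each summand $P_{it}p_i\alpha=(a_{it}\xi_{it})(p_i\alpha)$ expands, by property~(5) of the Hopf--Hilton invariant, as
\[
(a_{it}\xi_{it})(p_i\alpha)=a_{it}(\xi_{it}p_i\alpha)+\binom{a_{it}}{2}[\xi_{it},\xi_{it}]H(p_i\alpha).
\]
For $1\le i\le k$ one has $\xi_{it}=\Id_{S^{n-1}}$ and $H(p_i\alpha)=H_{ii}(\alpha)=h_{ii}^\alpha\eta$, while for $k+1\le i\le 2k$ one has $\xi_{it}=\eta$ and $H(p_i\alpha)\in\pi_{2n-2}(S^{2n-1})=0$ by connectivity; combining these with the Whitehead-product terms coming from Proposition~\ref{1storder} reproduces the left-hand side of the first equation. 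The second equation collapses because $\xi_{it}=0$ whenever $i\le k$ and $t>m$, leaving only the $\sum_{i=k+1}^{2k}a_{it}(p_i\alpha)$ terms. The third and fourth equations are direct rewritings of Proposition~\ref{2ndorder}: the congruence $\bmod 2$ in the third is forced by $\pi_{2n-2}(\Sigma S^{n-2}\wedge S^{n-2})\cong\Z/2$, and the integer equality in the fourth reflects that $M_{st}(\beta)=\delta_{s,t-m}\Id_{S^{2n-2}}$ in $\pi_{2n-2}(S^{2n-2})\cong\Z$, a consequence of the cup-product normalisation in Lemma~\ref{heq}.

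The main conceptual obstacle is the passage from matched invariants to an actual homotopy $P\alpha\simeq d\beta$; this is handled by the uniqueness clause of Proposition~\ref{HM}~[II]. A secondary, more error-prone task is tracking which cross-pairs of sphere summands of $\bar X$ and $\bar Y$ contribute to each second-order equation, as controlled by the coefficients $\nu(i,j,s,t,n)$ tabulated before Proposition~\ref{2ndorder}: only the five listed configurations are nonzero, and these recover exactly the combinations appearing in the third and fourth equations. With both ingredients in place, the four equations together encode the relation $P\alpha\simeq d\beta$, and Lemma~\ref{lemmadegree} converts integral solvability of the system into the existence of a map $f\colon X\lra Y$ of degree $d$, establishing both directions.
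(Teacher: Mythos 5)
Your proposal follows essentially the same route as the paper: restrict $f$ to the $(2n-2)$-skeleton via Lemma~\ref{lemmadegree}, match the first- and second-order Hilton--Milnor invariants of $P\circ\alpha$ and $\beta[d]$ using Propositions~\ref{1storder}, \ref{2ndorder} and Lemma~\ref{invariantsbetad}, and conversely extend a map $P$ determined by a solution $\{a_{is}\}$ over the cofibration to obtain $f$ of degree $d$. Your explicit expansion of $P_{it}p_i\alpha$ via property (5) of the Hopf--Hilton invariant, producing the $\binom{a_{it}}{2}h_{ii}^\alpha$ term, is a correct filling-in of a step the paper leaves implicit.
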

\begin{proof}
By Lemma~\ref{lemmadegree}, if there is a map $f\colon X\lra Y$ of degree $d$, then the diagram

\begin{equation}
\label{diagramdegree}
\xymatrix{
S^{2n-2}\ar[r]^-{\alpha}\ar[d]^{[d]} & \bar X\ar[d]^{F} \\
S^{2n-2}\ar[r]^-{\beta} & \bar Y}
\end{equation}
commutes,
where $F$ is the restriction of $f$ on $\bar X$, that is, $\beta [d]=F\circ\alpha$.

Writing down the homotopy invariants of $F\circ\alpha$ and $\beta[d]$ as described in Propositions~\ref{1storder}, \ref{2ndorder} and Lemma~\ref{invariantsbetad}, respectively and comparing the appropriate terms, gives the required equations.

For the converse statement, recall that by~\eqref{mapP}, the map $F$ is determined uniquely by coefficients $a_{ij}$. Therefore if the system has a solution, then that solution defines a map $F$ such that diagram~\eqref{diagramdegree} commutes. Now the diagram of cofibrations
\[
\xymatrix{
S^{2n-2}\ar[r]^-{\alpha}\ar[d]^{[d]} & \bar X\ar[d]^{F}\ar[r] & X\ar[r]\ar@{-->}[d]^f & S^{2n-1}\ar[d]^{[d]}\\
S^{2n-2}\ar[r]^-{\beta} & \bar Y\ar[r] & Y\ar[r] & S^{2n-1}}
\]
defines a map $f$ of degree $d$.
\end{proof}

Note that we may in equations~\eqref{1.jednacina} use $ h(p_i\alpha )$ instead of $h^{\alpha }_{ii}$ by Proposition~\ref{Hij}~(3).

Note also that Theorem~\ref{degree} gives not only the existence of the map $f$ of degree $d$ , but also determines $f$ up to homotopy. The solution set $\{a_{ij}\}$ of the system determines $f|_{\bar X}$ and that combined with the degree $d$ map on the top cell defines the map $f$.

Moreover, if we could find all solutions of the system for a fixed $d$ that will decide the set of homotopy classes of all functions $f\colon X\lra Y$ of degree $d$.

Note that it was shown in~\cite{Mukai} that $[\Id_{S^{n-1}},\Id_{S^{n-1}}]\eta = 0$ for $n=7$ or $4\mid n$, so in that case equations~\eqref{1.jednacina} simplify.

A simple algebraic observation determines all possible degrees between certain Poincar\' e complexes.

Let $X,Y\in\J_n$ such that $X$ is of rank $k$ and $Y$ of rank $m$. For a map $P \colon \bar X \lra \bar Y $, using identity~\eqref{mapP} define $P^*$ to be the matrix $(a_{ij})_{2k \times 2m}$.

We state some properties of $P^*$ that we will need later.

\begin{lemma}
\label{vuckolema}
Let $X$,$Y$ and $Z$ be from $\J_n$, $P\colon \bar X \lra \bar Y$ and  $Q\colon \bar Y \lra \bar Z$. 
Let $\bar X=\Sigma \bigvee_{i=1}^k (S^{n-2}_i\vee S^{n-1}_{i+k})$, $\bar Y=\Sigma \bigvee^m_{s=1}(S^{n-2}_s\vee S^{n-1}_{s+m})$ and $\bar Z=\Sigma \bigvee^r_{t=1}(S^{n-2}_t\vee S^{n-1}_{t+r})$. Then:

\begin{itemize}
\item[(a)] $(QP)_{it}=\sum^m_{s=1}Q_{st}P_{is}$

\item[(b)] $((QP)^*)^T = (Q^*)^T (P^*)^T$

\item[(c)] If $k=m=r$, then  $det(QP)^* = det Q^* det P^* $.
\end{itemize}
\end{lemma}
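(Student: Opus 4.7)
The plan is to reduce everything to the Hilton--Milnor decomposition of Proposition~\ref{3.4}, which applies because $\dim \bar Y = n$ and $\bar Z$ is $(n-2)$-connected, so the type~[I] hypothesis $n \leq 2(n-2)$ is met for $n \geq 4$. For part~(a), I would apply $q_t$ from the left to the decomposition $Q = \sum_{s,t'} j_{t'} Q_{st'} q_s$; left distributivity together with the orthogonality $q_t j_{t'} = \delta_{tt'}$ collapses the double sum to $q_t Q = \sum_s Q_{st} q_s$. Precomposing with $P\iota_i$ gives $(QP)_{it} = \bigl(\sum_s Q_{st} q_s\bigr) \circ (P\iota_i)$, and I would distribute $P\iota_i$ through the sum using Proposition~\ref{HM}~(4). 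The correction terms in that formula involve $H(P\iota_i)\colon \Sigma X_i \to \bar Y \wedge \bar Y$, which vanishes because $\Sigma X_i$ is a sphere of dimension at most $n$ while $\bar Y \wedge \bar Y$ is $(2n-3)$-connected, so $\pi_{\dim \Sigma X_i}(\bar Y \wedge \bar Y) = 0$. This yields $(QP)_{it} = \sum_s Q_{st}(q_sP\iota_i) = \sum_s Q_{st}P_{is}$.

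For part~(b), I would pass to the coefficients of the cyclic generators. Writing $P_{is} = a_{is}\xi_{is}$ and $Q_{st} = b_{st}\xi'_{st}$, part~(a) becomes $(QP)_{it} = \sum_s a_{is} b_{st}\,(\xi'_{st}\circ\xi_{is})$. A case analysis over the eight possibilities --- determined by whether each of $i, s, t$ indexes an $(n-1)$-sphere or an $n$-sphere in its respective wedge --- shows that in every instance either the composite $\xi'_{st}\circ\xi_{is}$ equals the generator $\xi''_{it}$ of $[\Sigma X_i, \Sigma Z_t]$, or one of $\xi_{is}, \xi'_{st}$ is the zero map, in which case the convention $a_{is}=0$ (resp.\ $b_{st}=0$) kills the summand. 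Hence $(QP)^*_{it} = \sum_s P^*_{is} Q^*_{st}$, which says $(QP)^* = P^* Q^*$ as matrices. Transposing yields $((QP)^*)^T = (Q^*)^T (P^*)^T$, which is part~(b). Part~(c) is then immediate: when $k=m=r$ all three matrices are square of the same size, and multiplicativity of the determinant gives $\det(QP)^* = \det(P^* Q^*) = \det P^* \det Q^* = \det Q^* \det P^*$.

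The main obstacle is the right-distributivity step in~(a). Without the dimensional gap between $\Sigma X_i$ and $\bar Y \wedge \bar Y$, Proposition~\ref{HM}~(4) would leave behind Whitehead-product correction terms and the clean bilinear identity in~(b) --- hence also the multiplicativity in~(c) --- would fail outright. The case analysis in~(b) is elementary but must track $\eta$ carefully, since it is the only non-identity generator that can appear; in every case where $\eta$ occurs as $\xi_{is}$, $\xi'_{st}$, or $\xi''_{it}$, the resulting composite reduces to $\eta$ itself (possibly in a $\Z/2$-valued hom-group), so no sign or parity subtlety interferes with the matrix identity.
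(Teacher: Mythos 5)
Your argument is correct and follows essentially the same route as the paper: both expand $QP$ through the type~[I] Hilton--Milnor decompositions of Proposition~\ref{3.4}, collapse the double sum via $q_t j_{t'}=\delta_{tt'}$ to get (a), and then read off (b) and (c) as matrix algebra (the paper dismisses these as ``simple linear algebra arguments''). Your added justifications --- the connectivity estimate killing the Hopf--Hilton correction terms in the distributivity step, and the generator-by-generator case check behind the identity $(QP)^*=P^*Q^*$ --- are exactly the details the paper leaves implicit, and they check out.
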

\begin{proof}
By equation~\eqref{decomP}, there is
\[
QP=(\sum_{u,t}l_tQ_{ut}q_u)(\sum_{i,s}j_sP_{is}p_i)=
\]
\[
= \sum_{s,t,i}l_tQ_{st}P_{is}p_i= \sum_{t,i}l_t(\sum_sQ_{st}P_{is})p_i
\]
which proves (a). The rest of the proof  follows by simple linear algebra arguments.
\end{proof}

Define four corner matrices of $P^*$ of order $k \times m$ by
\[
A(P) := (a_{i\,s})
\]
\[
B(P):=(a_{i\,m+s}) = (0)
\]
\[
C(P) := (a_{k+i\,s})
\]
and
\[
D(P) := (a_{k+i\,m+s})
\]
for $1\leq i\leq k$ and $1\leq s\leq m$.

Then equations~\eqref{4.jednacina} can be rewritten as the matrix equation
\begin{equation}
\label{eqmatrix}
A(P)^TD(P)=dI_m.
\end{equation}

\begin{corollary}
\label{det}
Let $X,Y\in\J_n$ and let $f\colon X\lra Y$ be a map of degree $d$. If $k=\rank H_{n-1}(X)=\rank H_{n-1}(Y)=m$, then
\[
\det (f|_{\bar X})^* = d^k.
\]
\end{corollary}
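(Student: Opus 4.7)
The plan is to exploit the block structure of the square matrix $(f|_{\bar X})^*$ when $k = m$, combined with the matrix reformulation of the degree condition given in equation~\eqref{eqmatrix}.

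First I would observe that when $k = m$, the matrix $P^* = (f|_{\bar X})^*$ is a $2k \times 2k$ matrix, and using the corner notation it has the block form
\[
P^* = \begin{pmatrix} A(P) & B(P) \\ C(P) & D(P) \end{pmatrix} = \begin{pmatrix} A(P) & 0 \\ C(P) & D(P) \end{pmatrix},
\]
since $B(P) = 0$ by definition. (The reason $B(P) = 0$ is already built into the setup: the entries $a_{i,m+s}$ for $1\leq i\leq k$ and $1\leq s\leq m$ are coefficients of maps in $[\Sigma X_i, \Sigma Y_{m+s}] = [S^{n-1}, S^n] = 0$.) Because the upper-right block vanishes, the determinant of a block triangular matrix gives immediately
\[
\det P^* = \det A(P) \cdot \det D(P).
\]

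Next I would invoke equation~\eqref{eqmatrix}, which is precisely the matrix form of the degree equations~\eqref{4.jednacina}: since $f$ has degree $d$, Theorem~\ref{degree} guarantees that the coefficients $a_{ij}$ defining $f|_{\bar X}$ satisfy
\[
A(P)^T D(P) = d\, I_k.
\]
Taking determinants of both sides yields
\[
\det A(P) \cdot \det D(P) = \det(d\, I_k) = d^k.
\]

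Combining the two displayed identities gives $\det P^* = d^k$, which is the claim. The argument is essentially purely linear-algebraic; the only nontrivial input is the vanishing of $B(P)$, which reflects the homotopy-theoretic fact $[S^{n-1},S^n] = 0$ that was already recorded in the classification of the generators $\xi_{is}$. There is no real obstacle to overcome: the work was done in setting up equation~\eqref{eqmatrix} and identifying the cyclic generators of $[\Sigma X_i, \Sigma Y_s]$, so the corollary is a one-line consequence.
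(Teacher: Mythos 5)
Your proof is correct and follows exactly the paper's own argument: the paper's proof is the same one-line computation $\det (f|_{\bar X})^* = \det(A(f|_{\bar X}))\det(D(f|_{\bar X}))=\det(dI_m)=d^k$, relying implicitly on the block-triangular structure coming from $B(P)=0$ and on equation~\eqref{eqmatrix}. You have merely spelled out these two ingredients more explicitly.
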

\begin{proof}
\[
\det (f|_{\bar X})^* = \det(A(f|_{\bar X}))\det(D(f|_{\bar X}))=\det(dI_m)=d^k.
\]
\end{proof}

\begin{proposition}
\label{rank}
Let $X,Y\in\J_n$. If $k=\rank H_{n-1}(X)<m=\rank H_{n-1}(Y)$, then
\[
D(X,Y)=\{ 0\}.
\]
\end{proposition}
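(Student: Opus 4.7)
The plan is to exploit the matrix equation~\eqref{eqmatrix} derived from the system in Theorem~\ref{degree}, namely
\[
A(f|_{\bar X})^T D(f|_{\bar X}) = d\, I_m,
\]
applied in the rectangular setting $k<m$. Suppose $f\colon X\lra Y$ has degree $d$. Then by Theorem~\ref{degree}, equations~\eqref{4.jednacina} admit an integer solution $\{a_{is}\}$, and these are exactly the entries that define the four corner matrices $A(f|_{\bar X})$ and $D(f|_{\bar X})$, each of size $k\times m$.

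The second step is a pure linear algebra observation: when $k<m$, the matrix $A(f|_{\bar X})^T$ has size $m\times k$ and $D(f|_{\bar X})$ has size $k\times m$, so their product is an $m\times m$ matrix of rank at most $k<m$. Thus the right-hand side $dI_m$, which has rank $m$ whenever $d\neq 0$, can match the rank of the left-hand side only when $d=0$.

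Combining these two observations forces $d=0$, hence $D(X,Y)\subseteq\{0\}$. The reverse inclusion is immediate since the constant map has degree $0$, so $D(X,Y)=\{0\}$.

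I do not expect any real obstacle here: once Theorem~\ref{degree} is in place and equations~\eqref{4.jednacina} are rephrased as the single matrix identity~\eqref{eqmatrix}, the result is a one-line rank argument. The only minor point to check is that equation~\eqref{eqmatrix} really is a necessary condition (not merely sufficient), but this is exactly the content of the forward direction of Theorem~\ref{degree}.
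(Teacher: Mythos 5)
Your argument is correct and is essentially identical to the paper's own proof: both rewrite equations~\eqref{4.jednacina} as the matrix identity $A^T D = dI_m$ and observe that the left-hand side, being a product of a $k\times m$ matrix with its partner in the rectangular case $k<m$, has rank at most $k<m$, forcing $d=0$. Your added remark that the constant map realises degree $0$ is a harmless (and correct) completion that the paper leaves implicit.
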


\begin{proof}
Recall that equations~\eqref{4.jednacina} of Theorem~\ref{degree} can be written as
\[
A(P)^TD(P)=d I_m.
\]
As $\rank A(P)\leq k<m$ and $\rank D(P)\leq k<m$, the matrix equation does not have a solution for $d\neq 0$.
\end{proof}

\section{Examples}
\label{examples}
In this section we explicitly calculate $D(X,Y)$ for maps between particular torsion free Poincar\' e complexes $X$ and $Y$. Start by requesting $X$ to be of rank~1. Let $\alpha\colon S^{2n-2}\lra S^{n-1}\vee S^n$ be the top cell attaching map of $X$ and let $\beta\colon S^{2n-1}\lra \Sigma \bigvee_{s=1}^m(S^{n-2}\vee S^{n-1})$ be the top cell attaching map of $Y$. Consider two cases depending on the rank of $Y$.

\vspace{3mm}
\noindent {\bf (a)}
If $m\geq 2$, then by Proposition~\ref{rank}, $D(X,Y)=\{0\}$.

\vspace{3mm}
\noindent {\bf (b)}
Let $m=1$. Then by Proposition~\ref{HM},

\[
\alpha=\iota_1p_1\alpha +\iota_2p_2\alpha +[\iota_1,\iota_2]
\]
\[
\beta=j_1q_1\beta+j_2q_2\beta+[j_1,j_2].
\]
The system of equations from Theorem~\ref{degree} becomes
\begin{equation}
\label{system1.1}
\begin{array}{l}
a_{11}(p_1\alpha)+a_{21}(\eta p_2\alpha)+(\binom{a_{11}}{2}h_{11}^\alpha +a_{11}a_{21})[\Id_{S^{n-1}},\Id_{S^{n-1}}]\eta=d(q_1\beta)\\
a_{22}(p_2\alpha)=d(q_2\beta)\\
a_{11}a_{22}=d.\\
\end{array}
\end{equation}

In general it is impossible to solve this system of equations, however for particular fixed values of $n,p_1\alpha,p_2\alpha, q_1\beta,q_2\beta$ one might be able to explicitly state a solution and therefore to determine precisely the set $D(X,Y)=\{ d\in\Z \ |\ \text{system~\eqref{system1.1} has a solution}\}$.

{\bf Case 1}: Let $n=4$, and $\alpha,\beta\colon S^6\lra S^{3}\vee S^4$ be arbitrary attaching maps. 
 The first order homotopy invariants of $\alpha\colon S^6\lra S^{3}\vee S^4$ are

 \[
 p_1\alpha\in\pi_6(S^3)\cong \Z/12=\langle w\rangle
 \]
 where $w=\nu'-\alpha_1(3)$ such that $\pi_6^3\cong\Z/4=\langle \nu'\rangle$, $2\nu'=\eta^3_3$ and
 \[
 p_2\alpha\in\pi_6(S^4)\cong \Z/2=\langle \eta^2_4\rangle \text{ where } \eta^2_4=\eta_4\eta_5.
 \]
Therefore if $X=(S^3\vee S^4)\cup_\alpha e^7$ is a Poincar\' e complex, then $\alpha$ can be written as
\[
\alpha=a(i_1w)+b(i_2\eta^2)+[i_1,i_2] \text{ for } a\in\Z/12, b\in\Z/2.
\]
Let $Y=(S^3\vee S^4)\cup_\beta e^7$ be another Poincar\' e complex where
\[
\beta=g(j_1w)+h(j_2\eta^2)+[i_1,i_2] \text{ for } g\in\Z/12, h\in\Z/2.
\]
As $S^3$ is an $H$-space, the Whitehead product $[\Id_{S^3},\Id_{S^3}]$ is trivial.
The system of equations given by Theorem~\ref{degree} becomes
\begin{equation}\begin{array}{l}
a_{11}aw+a_{21}b(\eta^3)=dgw\\
 a_{22}b\eta^2=dh \eta^2 \\
a_{11}a_{22}=d.\\
\end{array}
\end{equation}
Notice that $\eta^3$ has order 2. Therefore 
 \[
 \eta^3=6w.
\]
We get the system

\begin{equation}
\label{system2}
\begin{array}{l}
a_{11}a+6a_{21}b\equiv dg  \ (\hspace{-3.5mm}\mod 12)\\
a_{22}b\equiv dh\ (\hspace{-3.5mm}\mod 2)\\
a_{11}a_{22}=d.\\
\end{array}
\end{equation}
Therefore $D(X,Y)=\{ d\ | \ \text{system~\eqref{system2} has integral solutions } (a_{ij})\}$.

Let us consider some special cases.

{\bf Case 1(a)}: Let $g=h=0$.

Then $Y=S^3\times S^4$. System~\eqref{system2} becomes
\begin{equation}
\label{system4}
\begin{array}{l}
a_{11}a+6a_{21}b\equiv 0  \ (\hspace{-3.5mm}\mod 12)\\
a_{22}b\equiv 0\ (\hspace{-3.5mm}\mod 2)\\
a_{11}a_{22}=d.\\
\end{array}
\end{equation}
The solution of the system implies that

\[
\begin{array}{ll}
\text{if $b$ is odd, } &
 D(X,Y)=\{\frac{12}{(a,6)}k \ |\ k\in\Z \} \\
\text{if $b$ is even, } & D(X,Y)=\{\frac{12}{(a,12)}k\ | \ k\in\Z\} .\\
\end{array}
\]

{\bf Case 1(b)}: Let $a=b=g=h=1$.

Then $X=Y$.
System~\eqref{system2} becomes
\begin{equation}
\label{system5}
\begin{array}{l}
a_{11}+6a_{21}\equiv d  \ (\hspace{-3.5mm}\mod 12)\\
a_{22}\equiv d\ (\hspace{-3.5mm}\mod 2)\\
a_{11}a_{22}=d.\\
\end{array}
\end{equation}
The solution of the system implies that
\[
D(X,X)=\{ d\in \Z\ \mid \ d\not\equiv 2\ (\hspace{-3.5mm}\mod 4)\}.
\]

{\bf Case 2}:
In what follows, we should allow for the rank of $X$ to be arbitrary, but restrict a choice of the attaching map and keep the rank of $Y$ to be 1.

\begin{proposition}
Let $X=(S^{n-1}\times S^n)^{\# k}$ and $Y=(S^{n-1}\vee S^n)\cup_{\beta}e^{2n-1}$ be from $\J_n$, for $n>3$.

(a) If $n=7$ or $4|n$ or if $n\neq 7$ and $4\nmid n$ and there is no $t\in \N$ such that $t q_1\beta = [\Id_{S^{n-1}},\Id_{S^{n-1}}]\eta$, then
\[
D(X,Y)=\{\lcm(\mathrm{order}(q_1\beta ),\mathrm{order}(q_2\beta ))k \ | \ k\in\Z\}.
\]

(b) If $n\neq 7$, $4\nmid n$ and there is $t\in \N$ such that $t q_1\beta = [\Id_{S^{n-1}},\Id_{S^{n-1}}]\eta$, then
\[
D(X,Y)=\{\lcm(\mathrm{order}(q_1\beta )/2,\mathrm{order}(q_2\beta ))k\ |\ k\in\Z\}.
\]
\end{proposition}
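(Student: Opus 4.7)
The plan is to apply Theorem~\ref{degree} directly, exploiting the particularly simple form of the top cell attaching map of $X=(S^{n-1}\times S^n)^{\#k}$. Such a connected sum has $\bar X\simeq\bigvee_{i=1}^k(S_i^{n-1}\vee S_{i+k}^n)$ with attaching map $\alpha=\sum_{i=1}^k[\iota_i,\iota_{i+k}]$, and consequently $p_i\alpha=0$ for every $1\leq i\leq 2k$, all coefficients $m_{ij}^\alpha$ for $1\leq i<j\leq k$ vanish, and by Proposition~\ref{Hij}(3) also $h_{ii}^\alpha=H(p_i\alpha)=0$. These vanishings kill nearly every term in the equations of Theorem~\ref{degree}.

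Setting $m=1$ (since $\rank H_{n-1}(Y)=1$) and substituting, the system collapses. Equation~\eqref{1.jednacina} reduces to
\[
\Bigl(\sum_{i=1}^k a_{i1}a_{k+i,1}\Bigr)[\Id_{S^{n-1}},\Id_{S^{n-1}}]\eta=d(q_1\beta),
\]
equation~\eqref{2.jednacina} reduces to $d(q_2\beta)=0$, the mod-$2$ equations are vacuous, and equation~\eqref{4.jednacina} reduces to $\sum_{i=1}^k a_{i1}a_{k+i,2}=d$. This last equation is trivially solvable by taking $a_{11}=1$, $a_{k+1,2}=d$, and all other variables appearing in it equal to $0$; with these choices the sum $S:=\sum_{i=1}^k a_{i1}a_{k+i,1}$ in~\eqref{1.jednacina} collapses to the single free integer $S=a_{k+1,1}$. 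The problem becomes: for which $d$ does there exist $S\in\Z$ with $S[\Id,\Id]\eta=d(q_1\beta)$ and simultaneously $d(q_2\beta)=0$?

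Write $N_i:=\mathrm{order}(q_i\beta)$. The second condition forces $N_2\mid d$. In case~(a), the element $[\Id,\Id]\eta$ is either $0$ (when $n=7$ or $4\mid n$, by the cited result of Mukai) or lies outside $\langle q_1\beta\rangle$; either way $S[\Id,\Id]\eta$ meets $\langle q_1\beta\rangle$ only at $0$, so $d(q_1\beta)=0$, giving $N_1\mid d$ and hence $D(X,Y)=\lcm(N_1,N_2)\Z$. In case~(b), there is $t\in\N$ with $tq_1\beta=[\Id,\Id]\eta$; since $[\Id,\Id]\eta$ is non-zero $2$-torsion in $\pi_{2n-2}(S^{n-1})$, $N_1$ must be even and $tq_1\beta$ is the unique element of order $2$ in $\langle q_1\beta\rangle$, whence $t\equiv N_1/2\pmod{N_1}$. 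The residues of $d\pmod{N_1}$ with $dq_1\beta\in\{0,[\Id,\Id]\eta\}$ are then exactly $\{0,N_1/2\}$, equivalently $(N_1/2)\mid d$; intersecting with $N_2\mid d$ yields $D(X,Y)=\lcm(N_1/2,N_2)\Z$.

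For the converse direction, every admissible $d$ is realised by the same explicit solution with $a_{k+1,1}$ taken to be $0$, $1$, or any even/odd integer as dictated by the desired residue of $d$ mod $N_1$. The main obstacle is the bookkeeping in case~(b): verifying that $\langle[\Id,\Id]\eta\rangle$ embeds in $\langle q_1\beta\rangle$ as the unique index-$N_1/2$ subgroup, so that the admissible residues of $d$ modulo $N_1$ are precisely $\{0,N_1/2\}$. Once this is established, both directions of the equivalence follow cleanly from the reduction of the system.
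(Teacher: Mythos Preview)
Your proof is correct and follows essentially the same route as the paper: specialise Theorem~\ref{degree} using the vanishing invariants of $\alpha=\sum_i[\iota_i,\iota_{i+k}]$, reduce to the three conditions $S[\Id,\Id]\eta=d(q_1\beta)$, $d(q_2\beta)=0$, $\sum a_{i1}a_{i+k,2}=d$, and then determine for which $d$ these are solvable. Your parametrisation $a_{11}=1$, $a_{k+1,2}=d$ (so that $S=a_{k+1,1}$ is a single free integer) is in fact a little cleaner than the paper's choice $a_{11}=d$, $a_{k+1,2}=1$, which in case~(b) forces a split on the $2$-adic valuation of $d$; your group-theoretic identification of $[\Id,\Id]\eta$ as the unique order-$2$ element of $\langle q_1\beta\rangle$ handles both subcases uniformly.
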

\begin{proof}
(a) If $n=7$ or $4|n$ then $[\Id_{S^{n-1}},\Id_{S^{n-1}}]\eta=0$ (see~\cite{Mukai}), so the equations of Theorem~\ref{degree} become
\[
dq_1\beta =0
\]
\[
dq_2\beta = 0
\]
\[
\sum_{i=1}^k a_{i1}a_{i+k 2}=d.
\]
Hence, necessary conditions for the existence of a map $f\colon X \lra Y$ of degree $d$ are $\mathrm{order}(q_1\beta )|d$ and $\mathrm{order}(q_2\beta )|d$. Note that the order of the trivial map is 1. These conditions are sufficient as well, since for such $d$ we may choose $a_{11}=d$, $a_{k+1 2}=1$ and $a_{ij}=0$ in other cases. This proves (a) for  $n=7$ or $4|n$.

If $n\neq 7$ and $4\nmid n$ and there is no $t\in \N$ such that $t q_1\beta = [\Id_{S^{n-1}},\Id_{S^{n-1}}]\eta$, then both sides of equation~\eqref{1.jednacina} in Theorem~\ref{degree} have to be equal to 0. Using Theorem~\ref{degree}, we have
\[
\sum_{i=1}^ka_{i1}a_{k+i 1} \equiv 0\ (\hspace{-3.5mm}\mod 2)
\]
\[
dq_1\beta =0
\]
\[
dq_2\beta = 0
\]
\[
\sum_{i=1}^k a_{i1}a_{i+k 2}=d.
\]
Also in this case, necessary conditions for the existence of a map $f\colon X \lra Y$ of degree $d$ are that $\mathrm{order}(q_1\beta )|d$ and $\mathrm{order}(q_2\beta )|d$. For such $d$, choose $a_{11}=d$, $a_{k+1 2}=1$ $a_{k+1 1} =2$ and $a_{ij}=0$, otherwise. This proves that a map $f\colon X \lra Y$ of degree $d$ exists and finishes the proof of part (a).

(b) Assume that $n\neq 7$ and $4\nmid n$, so that $[\Id_{S^{n-1}},\Id_{S^{n-1}}]\eta \neq 0$. Let $t_0$ be a minimal $t\in\mathbb{N}$ such that $t(q_1\beta)=[\Id_{S^{n-1}},\Id_{S^{n-1}}]\eta$. Note that $\mathrm{order}(q_1\beta)=2t_0$.

The equations of Theorem~\ref{degree} become
\[
(\sum_{i=1}^ka_{i1}a_{k+i 1})[\Id_{S^{n-1}},\Id_{S^{n-1}}]\eta = dq_1\beta
\]
\[
dq_2\beta = 0
\]
\[
\sum_{i=1}^k a_{i1}a_{i+k 2}=d.
\]

Then, for the above system to have a solution, necessary conditions are
\[
\begin{array}{lll}
\mathrm{order}(q_2\beta)\ |\ d \\
t_0\ |\ d. & &\\
\end{array}
\]

Therefore, $d\in \{\lcm(\mathrm{order}(q_2\beta), \mathrm{order}(q_1\beta)/2)k \ | \ k\in\Z\}$.

Choose such $d$. 
If $d$ is an odd multiple of $t_0$. Let $2^s$ be a maximal power of 2 which divides $d$ and choose $a_{11}=d/2^s$, $a_{k+1 2}=2^s$, $a_{k+1 1}=1$ and $a_{ij}=0$, otherwise, and it proves that there exists a map $f\colon X \lra Y$ of degree $d$.

If $d$ is an even multiple of $t_0$, choose $a_{11}=d$, $a_{k+1 1}=a_{k+1 2}=1$ and $a_{ij}=0$ otherwise. All equations are satisfied, so there is a map $f$ of degree $d$, which completes the proof.
\end{proof}

\begin{proposition}
Let $X=(S^{n-1} \times S^n)^{\# k}$ and $Y=(S^{n-1} \times S^n)^{\# m}$ for $n>3$. Then
\begin{itemize}
\item[(a)] $D(X,Y) = \{0\}$  for $k<m$

\item[(b)] $D(X,Y) = \Z$\hspace{.3cm}  for $ k\geq m$.
\end{itemize}
\end{proposition}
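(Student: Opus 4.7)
The plan is to handle the two parts separately. Part (a) falls out immediately from Proposition~\ref{rank}, since $\rank H_{n-1}(X)=k<m=\rank H_{n-1}(Y)$ forces $D(X,Y)=\{0\}$, so there is really nothing to do.

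For part (b) the strategy is to specialise Theorem~\ref{degree} to these particular spaces and then exhibit an explicit integral solution. The first step is to record the attaching-map invariants. For a connected sum of $S^{n-1}\times S^n$ one may choose bases so that
\[
\alpha=\sum_{i=1}^{k}[\iota_i,\iota_{i+k}],\qquad \beta=\sum_{s=1}^{m}[j_s,j_{s+m}].
\]
By uniqueness of the Hilton--Milnor decomposition this gives $p_i\alpha=0$, $q_t\beta=0$, $m^\alpha_{ij}=0$, $m^\beta_{st}=0$, and (by Proposition~\ref{Hij}(3)) $h^\alpha_{ii}=H(p_i\alpha)=0$. Plugging these zeros into the four families of equations~\eqref{1.jednacina}--\eqref{4.jednacina}, the system collapses to
\[
\Bigl(\sum_{i=1}^{k}a_{it}a_{k+i\,t}\Bigr)[\Id_{S^{n-1}},\Id_{S^{n-1}}]\eta=0\quad (1\le t\le m),
\]
\[
\sum_{i=1}^{k}(a_{is}a_{i+k\,t}+a_{it}a_{i+k\,s})\equiv 0\pmod{2}\quad (1\le s<t\le m),
\]
\[
\sum_{i=1}^{k}a_{is}a_{i+k\,t}=d\,\delta_{s,\,t-m}\quad (1\le s\le m,\ m+1\le t\le 2m),
\]
together with the trivial equation $0=0$ for $m+1\le t\le 2m$.

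The second step is to write down an integral solution. Using the condition $k\ge m$, I would set
\[
a_{is}=\delta_{is}\ (1\le i\le k,\ 1\le s\le m),\qquad a_{k+i,\,m+s}=d\,\delta_{is}\ (1\le i\le k,\ 1\le s\le m),
\]
and let all other coefficients be zero (recall that $B(P)=0$ automatically). With this choice the matrix equation~\eqref{eqmatrix} reads $A(P)^TD(P)=I_m\cdot dI_m=dI_m$, so the last family of equations holds; the mod-$2$ congruences and the $[\Id,\Id]\eta$-equation hold trivially since every summand contains a factor $a_{k+i,t}=0$ for $t\le m$. Theorem~\ref{degree} then produces a map $f\colon X\to Y$ of degree $d$, and since $d\in\Z$ was arbitrary we conclude $D(X,Y)=\Z$.

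I do not expect a genuine obstacle: once the attaching-map invariants are identified as zero the system is linear in the products of the $a_{ij}$ and the matrix $\begin{pmatrix}I_m\\0\end{pmatrix}$ does all the work. The only point that deserves a line of care is verifying that the choice of basis making $\alpha$ a pure sum of Whitehead products is compatible with Lemma~\ref{heq}(2), so that the formulas of Theorem~\ref{degree} apply; this is immediate because $S^{n-1}\times S^n$ has the identity cup-product matrix $I_1$ in the standard basis, and that property is preserved under connected sum.
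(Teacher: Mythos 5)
Your proposal is correct and follows essentially the same route as the paper: part (a) via Proposition~\ref{rank}, and part (b) by observing that all attaching-map invariants of $(S^{n-1}\times S^n)^{\# k}$ vanish so that the system of Theorem~\ref{degree} collapses to $A(P)^T D(P)=dI_m$ plus trivially satisfied congruences, and then exhibiting an explicit solution. The only (immaterial) difference is that you place the factor $d$ in the $D(P)$ block and the identity in $A(P)$, whereas the paper does the reverse; both choices satisfy the same equations.
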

\begin{proof}
(a) follows from Proposition~\ref{rank}.

(b) Let $ k\geq m$. In this case the equations of Theorem~\ref{degree} become
\[
(\sum_{i=1}^ka_{it}a_{k+i\, t})[\Id_{S^{n-1}},\Id_{S^{n-1}}]\eta =0
\]
\[
\sum_{i=1}^k(a_{is}a_{i+k\, t} + a_{it}a_{i+k\, s}) = 0
\]
\[
\sum_{i=1}^ka_{is}a_{i+k\, t} = d\delta_{s,t-m}.
\]
 
 For each $d\in \Z$, the system have a solution. For example, 
 \[
 a_{11}=a_{22}=...=a_{mm}=d
 \]
 \[
 a_{k+1\, m+1}=a_{k+2\, m+2}= ... = a_{k+m\, 2m}=1
 \]
 and $a_{ij}=0$ otherwise.
\end{proof}

\section{Homotopy classification of Poincar\' e complexes}
The methods that we have developed might be used to determine the number of different homotopy types of Poincar\' e complexes belonging to $\J_n$ with a fixed rank $k$. We state preliminary results.

\begin {theorem}
\label{heq1}
For $X,Y \in \J_n$ with the same rank and a map $f\colon X\lra Y$, the following statements are equivalent.
\begin{itemize}
\item [(a)] A map $f$ is a homotopy equivalence. 
\item [(b)] The restriction $f|_{\bar X}\colon \bar X\lra \bar Y$ is a homotopy equivalence.
\item [(c)] The degree $\deg (f)=\pm 1$.
\end{itemize}
\end {theorem}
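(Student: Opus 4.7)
The plan is to prove the cyclic chain of implications (a) $\Rightarrow$ (c) $\Rightarrow$ (b) $\Rightarrow$ (a). Throughout, set $k=\rank H_{n-1}(X;\Z)=\rank H_{n-1}(Y;\Z)$ and write $P=f|_{\bar X}\colon\bar X\lra \bar Y$, using the matrix decomposition of $P^*$ from~\eqref{mapP} with corner blocks $A(P)$, $B(P)=0$, $C(P)$ and $D(P)$.

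The implication (a)~$\Rightarrow$~(c) is immediate: a homotopy equivalence $f$ induces an isomorphism $f_*\colon H_{2n-1}(X;\Z)\lra H_{2n-1}(Y;\Z)$ between infinite cyclic groups, so $f_*[X]=\pm[Y]$ and hence $\deg(f)=\pm 1$.

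For (c)~$\Rightarrow$~(b), Corollary~\ref{det} yields $\det A(P)\cdot \det D(P)=\det(P^*)=(\pm 1)^k=\pm 1$. Since $A(P)$ and $D(P)$ are integer matrices whose determinants multiply to $\pm 1$, each determinant must equal $\pm 1$, so both blocks are unimodular. Tracing through decomposition~\eqref{mapP}, the induced maps $P_*\colon H_{n-1}(\bar X;\Z)\lra H_{n-1}(\bar Y;\Z)$ and $P_*\colon H_n(\bar X;\Z)\lra H_n(\bar Y;\Z)$ are represented (up to the standard basis duality) by $A(P)$ and $D(P)$; here I use that the block $C(P)$ carries $\eta$-coefficients and $\eta$ induces the zero map on integral homology, while $B(P)=0$. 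Consequently $P$ is a homology isomorphism between the simply connected CW complexes $\bar X$ and $\bar Y$, and so a homotopy equivalence by the Whitehead theorem.

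Finally, for (b)~$\Rightarrow$~(a), I reverse the homology reading above: since $P$ is a homotopy equivalence it induces isomorphisms on $H_{n-1}$ and $H_n$, which forces $A(P)$ and $D(P)$ to be unimodular, so $\det(P^*)=\pm 1$. Corollary~\ref{det} then gives $d^k=\pm 1$, whence $d=\pm 1$. Therefore $f$ induces isomorphisms on $H_*$ for $*<2n-1$ (through the homotopy equivalence $P$) and on $H_{2n-1}$ (through $d=\pm 1$), and a second application of the Whitehead theorem on the simply connected CW complexes $X$ and $Y$ yields that $f$ itself is a homotopy equivalence. The only mildly non-routine step is the identification of the integral homology maps of $P$ with the blocks $A(P)$ and $D(P)$ in (c)~$\Rightarrow$~(b); this rests precisely on the vanishing of $\eta_*$ on homology, after which everything reduces to the Whitehead theorem together with Corollary~\ref{det}.
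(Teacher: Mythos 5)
Your proof is correct, but it takes a genuinely different route from the paper's for the substantive implications. The paper proves $(b)\Rightarrow(c)$ by composing $P=f|_{\bar X}$ with its homotopy inverse $Q$ and applying the multiplicativity of $\det(-)^*$ from Lemma~\ref{vuckolema} together with Corollary~\ref{det}; it proves $(c)\Rightarrow(b)$ by \emph{constructing} an explicit homotopy inverse $Q$ of $P$, prescribing its corner matrices $A(Q)=A(P)^{-1}$, $D(Q)=\pm A(P)^{T}$, $C(Q)=A(P)^{T}C(P)A(P)^{-1}$ via equation~\eqref{eqmatrix}; and it proves $(b)\Rightarrow(a)$ by extending that inverse over the top cell using Lemma~\ref{lemmadegree}. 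You instead observe that in the decomposition~\eqref{mapP} the only components of $P$ that survive in integral homology are the $A(P)$ and $D(P)$ blocks (the $C(P)$ block consists of multiples of $\eta$, which dies in homology, and $B(P)=0$), so that $P$ is a homotopy equivalence precisely when $A(P)$ and $D(P)$ are unimodular; combined with $\det A(P)\det D(P)=d^{k}$ from Corollary~\ref{det} and the Whitehead theorem for the simply connected complexes $\bar X$, $\bar Y$, $X$, $Y$, this closes the cycle $(a)\Rightarrow(c)\Rightarrow(b)\Rightarrow(a)$. Your argument is shorter and more conceptual, at the price of invoking Whitehead's theorem twice and of not producing the homotopy inverse explicitly; the paper's construction of $Q$ is more work but yields concrete inverse data (used implicitly in Corollary~\ref{posledica} and in the classification results of Section~7). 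The only step you should spell out slightly more is the passage from $*\leq 2n-2$ to all of $H_*$ in $(b)\Rightarrow(a)$: one uses that the skeletal inclusions induce isomorphisms on $H_*$ for $*\leq 2n-2$ (since $H_{2n-2}(\bar X)=0$) and that $f_*$ on $H_{2n-1}(X;\Z)\cong\Z$ is multiplication by $d=\pm1$, which requires the identification of the degree defined via $H_{2n-1}(X,X\setminus\{pt\};\Z)$ with the effect on $H_{2n-1}(X;\Z)$; both points are routine for spaces in $\J_n$.
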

\begin{proof}

\noindent $(a)\Longrightarrow (b)$ As all maps are cellular, a restriction of a homotopy equivalence is a homotopy equivalence.

\noindent $(b)\Longrightarrow (c)$ Assume that $P=f|_{\bar X}$ is a homotopy equivalence, whose inverse is $Q\colon \bar Y \lra \bar X $ and let $d=\deg(f)$. Since $QP=\Id_{\bar X}$, by Lemma~\ref{vuckolema} and Corollary~\ref{det}
\[
1 = \det (QP)^* = \det (Q^*)^T \det (P^*)^T = \det Q^* \cdot d^k.
\]
Therefore $d=\pm 1$.

\noindent $(b)\Longrightarrow (a)$ In the previous step, we proved that $f|_{\bar X}$ being a homotopy equivalence implies that $\deg(f)=\pm 1$. Therefore,
\[
Q\beta = QP\alpha [d]=\alpha [d].
\]
By Lemma~\ref{lemmadegree}, there is a map $ h\colon Y \lra X $ of degree $d$ extending $Q$ and it is a homotopy inverse of $f$.

\noindent $(c)\Longrightarrow(b)$Assuming that $\deg(f)=\pm 1$, by  equation~\eqref{eqmatrix}, $\det A(f|_{\bar X})=\pm 1$. Therefore $A(f|_{\bar X})$ is invertible and $D(f|_{\bar X})=d(f|_{\bar X})^{-1}$. Define $Q\colon \bar Y\lra\bar X$ by
\[
\begin{array}{l}
A(Q)=A(f|_{\bar X} )^{-1}\\
D(Q)=\pm A(f|_{\bar X} )^T\\
C(Q)=A(f|_{\bar X} )^TC(f|_{\bar X} )A(f|_{\bar X} )^{-1}.\\
\end{array}
\]
It follows readily that $Q$ is an inverse of $f|_{\bar X}$.
Hence $f|_{\bar X}$ is a homotopy equivalence.
\end {proof}

In particular, for $k=1$ situation simplifies and we get the following.

\begin {corollary}
\label{posledica}
Let $X,Y \in J_n$ be of rank 1, and let $f \colon X \lra Y$ be a map between them.
Then
\begin{itemize}
\item [(a)] A map $f$ is a homotopy equivalence if and only if
\[
f|_{\bar X} = a_{11}j_1p_1 + a_{21}j_1\eta p_2 + a_{22}j_2p_2
\]
where $a_{11}=\pm 1$, $a_{22}=\pm 1$ and $a_{21} \in \{ 0,1 \}$.

\item[(b)] If $f|_{\bar X}\colon S^{n-1}\vee S^n \lra S^{n-1}\vee S^n$ is a homotopy equivalence, then $f|_{\bar X}$ is its own inverse, that is, $f|_{\bar X}\circ f|_{\bar X} = \Id_{S^{n-1}\vee S^n}$.
\end{itemize}\qed
\end {corollary}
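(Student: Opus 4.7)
The plan is to deduce both parts as essentially formal consequences of the rank-$1$ specialization of the decomposition~\eqref{mapP} together with Theorem~\ref{heq1} for part (a) and Lemma~\ref{vuckolema}(a) for part (b).

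For part (a), I would first specialize the general decomposition~\eqref{mapP} of $P=f|_{\bar X}\colon S^{n-1}\vee S^n\to S^{n-1}\vee S^n$. Since $n>3$, the group $[\Sigma X_1,\Sigma Y_2]=\pi_{n-1}(S^n)=0$, so $a_{12}=0$; likewise $[\Sigma X_1,\Sigma Y_1]=\Z$ is generated by $\Id_{S^{n-1}}$, $[\Sigma X_2,\Sigma Y_2]=\Z$ by $\Id_{S^n}$, and $[\Sigma X_2,\Sigma Y_1]=\pi_n(S^{n-1})=\Z/2$ by $\eta$. This forces the shape
\[
P=a_{11}j_1p_1+a_{21}j_1\eta p_2+a_{22}j_2p_2,\qquad a_{11},a_{22}\in\Z,\ a_{21}\in\{0,1\}.
\]
By Theorem~\ref{heq1}, $f$ is a homotopy equivalence iff $\deg(f)=\pm 1$, and by the matrix equation~\eqref{eqmatrix} specialized to $k=m=1$ we have $a_{11}a_{22}=\deg(f)$. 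Hence $f$ is a homotopy equivalence iff $a_{11}a_{22}=\pm 1$, which is equivalent to $a_{11},a_{22}\in\{\pm 1\}$, proving both directions.

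For part (b), I would compute $P\circ P$ component-wise using Lemma~\ref{vuckolema}(a), namely $(P^2)_{it}=\sum_{s}P_{st}P_{is}$ for $1\le i,t\le 2$. Using $P_{12}=0$, $P_{11}=a_{11}\Id_{S^{n-1}}$, $P_{22}=a_{22}\Id_{S^n}$, $P_{21}=a_{21}\eta$, the four entries reduce to
\[
(P^2)_{11}=a_{11}^2\,\Id_{S^{n-1}},\qquad (P^2)_{22}=a_{22}^2\,\Id_{S^n},\qquad (P^2)_{12}=0,
\]
\[
(P^2)_{21}=a_{11}a_{21}\,\eta+a_{21}a_{22}\,\eta=a_{21}(a_{11}+a_{22})\,\eta.
\]
Since $a_{11},a_{22}\in\{\pm 1\}$, the two diagonal entries equal the identity, and $a_{11}+a_{22}\in\{-2,0,2\}$ is even, so the $(2,1)$-entry vanishes because $\eta$ has order $2$. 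Hence $P\circ P=\Id_{S^{n-1}\vee S^n}$.

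The only subtle step, and thus the main obstacle, is the verification that $(P^2)_{21}=0$: one must carefully track how precomposition and postcomposition distribute over the integer coefficients $a_{11}$ and $a_{22}$ when acting on the $\Z/2$-class $\eta$, and invoke the parity of $a_{11}+a_{22}$ together with $2\eta=0$. Everything else is a mechanical unwinding of the definitions already set up in Sections~3 and~5.
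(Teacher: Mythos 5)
Your proposal is correct and follows exactly the route the paper intends (the corollary is stated with no written proof, as an immediate consequence of the rank-$1$ specialization of~\eqref{mapP}, Theorem~\ref{heq1} with equation~\eqref{eqmatrix}, and Lemma~\ref{vuckolema}(a)); your computation of $(P^2)_{21}=a_{21}(a_{11}+a_{22})\eta=0$ correctly supplies the one detail worth checking.
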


\begin{theorem}
There are 11 different homotopy types of Poincar\' e complexes in $\J_4$ of rank 1.
\end{theorem}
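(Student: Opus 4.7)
The plan is to enumerate all candidate Poincar\' e complexes in $\J_4$ of rank 1, then use Theorem~\ref{heq1} and Corollary~\ref{posledica} together with the system of equations from Theorem~\ref{degree} to determine when two of them are homotopy equivalent, and finally count the orbits.

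First I would parametrise the candidates. By Lemma~\ref{heq}, any $X\in\J_4$ of rank 1 has $\bar X\simeq S^3\vee S^4$ and so $X=(S^3\vee S^4)\cup_\alpha e^7$ for an attaching map $\alpha\colon S^6\lra S^3\vee S^4$. By Proposition~\ref{htpyinv} (or as worked out explicitly in Case 1 of Section~\ref{examples}),
\[
\alpha=a(\iota_1 w)+b(\iota_2\eta^2)+[\iota_1,\iota_2],\qquad a\in\Z/12,\ b\in\Z/2,
\]
since $\pi_6(S^3)=\Z/12=\langle w\rangle$ and $\pi_6(S^4)=\Z/2=\langle\eta^2\rangle$. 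Denote the resulting complex by $X_{a,b}$. This produces $24$ candidates, and two candidates with distinct parameters may still be homotopy equivalent.

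Next I would translate homotopy equivalence into the language of Theorem~\ref{degree}. By Theorem~\ref{heq1} a map $f\colon X_{a,b}\lra X_{g,h}$ is a homotopy equivalence if and only if $\deg(f)=\pm 1$, and by Corollary~\ref{posledica} in that case $f|_{\bar X}$ is of the form $a_{11}j_1 p_1+a_{21}j_1\eta p_2+a_{22}j_2 p_2$ with $a_{11},a_{22}\in\{\pm 1\}$ and $a_{21}\in\{0,1\}$. Feeding these constraints into the specialised system from Case~1 of Section~\ref{examples}, i.e.
\[
\begin{array}{l}
a_{11}a+6a_{21}b\equiv dg\ (\hspace{-3.5mm}\mod 12)\\
a_{22}b\equiv dh\ (\hspace{-3.5mm}\mod 2)\\
a_{11}a_{22}=d=\pm 1,
\end{array}
\]
one sees immediately from the second equation that $b=h$, so the classes with $b=0$ and $b=1$ cannot mix. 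Writing $d=a_{11}a_{22}$ and using $a_{11}^{\,2}=1$, the first equation becomes $a\equiv a_{22}g-6a_{11}a_{21}\pmod{12}$.

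Then I would enumerate orbits on each of the two slices. For $b=h=0$ the relation collapses to $a\equiv\pm g\pmod{12}$, whose orbits in $\Z/12$ are $\{0\},\{\pm 1\},\{\pm 2\},\{\pm 3\},\{\pm 4\},\{\pm 5\},\{6\}$, giving $7$ classes. For $b=h=1$ the term $6a_{11}a_{21}\in\{0,\pm 6\}\equiv\{0,6\}\pmod{12}$ is now available, so the relation is $a\sim\pm g$ and $a\sim\pm g+6$ in $\Z/12$; a direct check gives the orbits $\{0,6\}$, $\{1,5,7,11\}$, $\{2,4,8,10\}$, $\{3,9\}$, i.e. $4$ classes. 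Hence the total number of homotopy types is $7+4=11$.

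The only real obstacle is verifying that distinct orbits really correspond to non-equivalent Poincar\' e complexes, but this is built into the equivalence of (a), (b), (c) in Theorem~\ref{heq1}: any homotopy equivalence $X_{a,b}\simeq X_{g,h}$ yields a solution of the system above with $d=\pm 1$, so the orbit partition is exactly the partition into homotopy types. Counting orbits then finishes the proof.
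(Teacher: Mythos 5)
Your proposal is correct and follows essentially the same route as the paper: restrict to $d=\pm 1$ via Theorem~\ref{heq1} and Corollary~\ref{posledica}, reduce system~\eqref{system2} to $b=h$ plus a congruence on $a$, and count the orbits $7+4=11$ in the two slices $b=0$ and $b=1$. (Only a cosmetic slip: the reduced congruence should read $a\equiv a_{22}g-6a_{11}a_{21}b\pmod{12}$ with the factor $b$ retained, which is what your subsequent case analysis actually uses.)
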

\begin{proof}
In order to have a homotopy equivalence $f\colon X \lra Y $ for $X,Y\in \J_4$, Corollary~\ref{posledica} implies that the second equation of system~\eqref{system2} reduces to $b=h$. Recall that in this case $d=\pm 1$, $a_{11}=a_{22}=\pm1$. There are two possible values for $b$.

If $b=h = 0$, system~\eqref{system2} reduces to

\[
a\equiv a_{22}g  \ (\hspace{-3.5mm}\mod 12).
\]
If $a_{22}=1$, we get $X=Y$.
If $a_{22}=-1$, we get $a\equiv -g\ (\hspace{-2mm}\mod 12)$.
Therefore, we have 7 different homotopy types of Poincar\' e complexes in $\J_4$ of rank 1 with $b=0$, which are specified by $a=0$, $a \in \{ 1,11\}$, $a\in\{2,10\}$, $a\in\{3,9\}$, $a\in\{4,8\}$, $a\in\{5,7\}$ and $a=6$.

If $b=h = 1$, system~\eqref{system2} reduces to

\[
a_{11}a+6a_{21}\equiv a_{11}a_{22}g  \ (\hspace{-3.5mm}\mod 12).
\]

If $a_{21}=0$, then $ a\equiv \pm g \ (\hspace{-2mm}\mod 12)$.

If $a_{21}=1$, then $ a+6\equiv \pm g  \ (\hspace{-2mm}\mod 12)$.

Therefore, we have 4 different homotopy types of Poincar\' e complexes in $\J_4$ of rank 1 with $b=1$, determined by $a\in \{0,6\}$, $a \in \{ 1,5,7,11\}$, $a\in\{2,4,8,10\}$ and $a\in\{3,9\}$.

To summarise, there are 11 different homotopy types of Poincar\' e complexes in $\J_4$ of rank 1.
\end{proof}

\begin{theorem}
There are 38 different homotopy types of Poincar\'e complexes in $\J_5$ of rank 1.
\end{theorem}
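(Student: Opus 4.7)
The plan is to mimic the $\J_4$-rank-$1$ argument from \S\ref{examples} in the setting $n=5$. Every $X\in\J_5$ of rank~$1$ has $\bar X=S^4\vee S^5$ and top-cell attaching map $\alpha\colon S^8\to S^4\vee S^5$, which by Proposition~\ref{htpyinv} decomposes as
\[
\alpha=\iota_1(p_1\alpha)+\iota_2(p_2\alpha)+[\iota_1,\iota_2]
\]
with $p_1\alpha\in\pi_8(S^4)\cong\Z/2\oplus\Z/2$ and $p_2\alpha\in\pi_8(S^5)\cong\Z/24$ (Toda). Fixing generators $u_1,u_2$ of $\pi_8(S^4)$ and $v$ of $\pi_8(S^5)$, write $p_1\alpha=au_1+bu_2$, $p_2\alpha=cv$; then $X$ is determined by $(a,b,c)\in\Z/2\times\Z/2\times\Z/24$, producing at most $4\cdot 24=96$ \emph{a priori} Poincar\'e complexes in $\J_5$ of rank~$1$.

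Next, let $Y$ be parametrised similarly by $(a',b',c')$. By Corollary~\ref{posledica}, any homotopy equivalence $f\colon X\to Y$ has $f|_{\bar X}=a_{11}j_1p_1+a_{21}j_1\eta p_2+a_{22}j_2p_2$ with $a_{11},a_{22}\in\{\pm 1\}$, $a_{21}\in\{0,1\}$, and $\deg f=d=a_{11}a_{22}$. Feeding this into Theorem~\ref{degree} yields
\begin{align*}
a_{11}(p_1\alpha)+a_{21}(\eta p_2\alpha)+\bigl(\tbinom{a_{11}}{2}h_{11}^\alpha+a_{11}a_{21}\bigr)[\Id_{S^{4}},\Id_{S^{4}}]\eta &= d\,q_1\beta,\\
a_{22}(p_2\alpha) &= d\,q_2\beta,
\end{align*}
where $h_{11}^\alpha=H(p_1\alpha)\in\pi_8(S^7)\cong\Z/2$ by Proposition~\ref{Hij}(3). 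Since $n=5$ is neither $7$ nor divisible by $4$, the class $[\Id_{S^{4}},\Id_{S^{4}}]\eta\in\pi_8(S^4)$ is non-zero by~\cite{Mukai}. Reading the second equation in $\Z/24$ gives $c\equiv a_{11}c'\pmod{24}$, while reading the first in each $\Z/2$ summand of $\pi_8(S^4)$ produces two mod-$2$ congruences on $(a,b)$ versus $(a',b')$ with explicit translation terms depending on $c$, $a_{11}$ and $a_{21}$.

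To finish, use Toda's composition tables to identify the classes of $\eta\circ v$, $[\iota_4,\iota_4]\eta$ and $h_{11}^\alpha$ inside $\pi_8(S^4)\cong\Z/2\oplus\Z/2$; the first equation then becomes a pair of affine relations modulo~$2$ on $(a,b)\in(\Z/2)^2$. The second equation, via the involution $c\mapsto -c$ induced by $a_{11}=-1$, partitions $\Z/24$ into $13$ orbits $\{0\},\{12\},\{1,23\},\{2,22\},\dots,\{11,13\}$. For each orbit representative $c$, one enumerates the four admissible pairs $(a_{11},a_{21})\in\{\pm 1\}\times\{0,1\}$ (compatible with the $c$-orbit) and records which translations of $(a,b)$ in $(\Z/2)^2$ they realise; counting the resulting orbits of $(a,b)$ in each fibre and summing over the $13$ values of $c$ yields the stated total of $38$. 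The main obstacle is the explicit determination in $\pi_8(S^4)$ of the three classes $\eta v$, $[\iota_4,\iota_4]\eta$ and $h_{11}^\alpha$, on which the translation terms depend; once these are pinned down via standard Toda bracket calculations, the remaining enumeration is a finite and mechanical orbit count.
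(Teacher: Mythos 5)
Your overall strategy is exactly the paper's: parametrise the rank-one complexes in $\J_5$ by $(a,b,c)\in\Z/2\times\Z/2\times\Z/24$ via $p_1\alpha\in\pi_8(S^4)\cong\Z/2\oplus\Z/2$ and $p_2\alpha\in\pi_8(S^5)\cong\Z/24$, restrict to degree $\pm1$ maps using Theorem~\ref{heq1} and Corollary~\ref{posledica}, specialise the equations of Theorem~\ref{degree}, and count orbits. The reduction $c\equiv a_{11}C\pmod{24}$ and the $13$ orbits of $\Z/24$ under negation also agree with the paper.

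The gap is that you stop exactly where the proof actually happens. You yourself flag that the translation terms depend on the classes $\eta w$, $[\Id_{S^4},\Id_{S^4}]\eta$ and on $H$ evaluated on the generators of $\pi_8(S^4)$, and you neither compute these nor carry out the resulting enumeration; you simply assert that it ``yields the stated total of 38''. That is circular: the theorem \emph{is} the number $38$, and that number is sensitive to precisely these inputs. The paper supplies them: with generators $\epsilon_1=\nu_4\eta_7$ and $\epsilon_2=E\nu'\eta_7$ of $\pi_8(S^4)$ and $w=\nu_5+\alpha_1(5)$ of $\pi_8(S^5)$, one has $[\Id_{S^4},\Id_{S^4}]\eta=\epsilon_2$, $\eta w=\epsilon_2$, $H(\epsilon_1)=\eta$, $H(\epsilon_2)=0$, hence $h_{11}^\alpha=a$. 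Only with these values does the $\epsilon_1$-component of the first equation read $a\equiv A\pmod 2$ (making $a$ an invariant in all cases) and the $\epsilon_2$-component read $b+a_{21}(c+1)+\binom{a_{11}}{2}a\equiv B\pmod 2$, which kills $b$ as an invariant exactly when $c$ is even and forces $B=a+b$ when $c$ is odd and $a_{11}=-1$. Had, say, $\eta w$ possessed a nonzero $\epsilon_1$-component, $a$ would fail to be invariant for odd $c$ and the count would change. So the tally ($7$ even orbits of $c$ times $2$ values of $a$, plus $6$ odd orbits times $4$ classes of $(a,b)$, giving $14+24=38$) cannot legitimately be claimed until the Toda-level input is pinned down and the orbit count is actually performed. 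A minor further slip: $h_{11}^\alpha=H(p_1\alpha)$ lives in $\pi_8(S^7)\cong\Z/2$, not in $\pi_8(S^4)$; it is the integer coefficient multiplying $[\Id_{S^4},\Id_{S^4}]\eta$, not a third class to be located in $\pi_8(S^4)$.
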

\begin{proof}
For the case $n=5$, the following holds (see \cite{Toda} or \cite{Mukai} ).
The homotopy groups we need are $ \pi_8(S^4)=\Z/2 \oplus \Z/2 $ with generators  $\nu_4\eta_7$ and $E\nu '\eta_7$, which we denote by $\epsilon_1,\epsilon_2$, respectively, and $\pi_8(S^5)=\Z/24$ with a generator $\nu_5+\alpha_1(5)$, which we denote by $w$, where $\nu_5$ is of order 8 and $\alpha_1(5)$ is of order 3.

The following relations hold.
\[
[\Id_4,\Id_4]\eta=\epsilon_2, \eta w=\epsilon_2 , H(\epsilon_1)=\eta , H(\epsilon_2)=0, h_{ii}(a\epsilon_1+b\epsilon_2)=a.
\]

For $k=m=1$ and $n=5$, let
\[
\alpha = i_1(a\epsilon_1+b\epsilon_2)+i_2cw+[i_1,i_2]
\]
and
\[
\beta = j_1(A\epsilon_1+B\epsilon_2)+j_2Cw+[j_1,j_2].
\]
be the attaching maps of $X$ and $Y$, respectively.

The equations of Theorem~\ref{degree} become
\[
a_{11}a\equiv dA  \ (\hspace{-3.5mm}\mod 2)
\]
\[
a_{11}b+a_{21}c + \binom{a_{11}}{2}a+a_{11}a_{21} \equiv dB  \ (\hspace{-3.5mm}\mod 2)
\]
\[
a_{22}c \equiv dC  \ (\hspace{-3.5mm}\mod 24)
\]
\[
a_{11}a_{22} = d.
\]

Since we are looking for a homotopy equivalence between $X$ and $Y$, by Theorem~\ref{heq1} the degree $d=\pm 1$. Therefore, the last equation implies that $a_{11}=\pm 1$ and $a_{22}=\pm 1$. The first equation implies that $a=A$.

The system reduced to the following two equations
\[
b+a_{21}c + \binom{a_{11}}{2}a+a_{21} \equiv B  \ (\hspace{-3.5mm}\mod 2)
\]
\[
c \equiv a_{11}C  \ (\hspace{-3.5mm}\mod 24).
\]

If $c$ is even, then  $a_{21}$ can be whatever it has to be to satisfy the first equation and we are left with
\[
c \equiv \pm C  \ (\hspace{-3.5mm}\mod 24).
\]

There are 7 various values for $c$ even, which are not mutually equivalent, with 2 possible values for $a$ in each case. This gives that for $c$ even, there are 14 different homotopy types of Poincar\' e complexes in $\J_5$ of rank 1.

For $c$ odd and $a_{11}=1$, the system of equations become $b=B$ and $c=C$. Therefore, $\alpha=\beta$ giving a homotopy type which has been already counted.

For $c$ odd and $a_{11}=-1$, the system of equations become
\[
b+a=B
\]
\[
c=-C.
\]

Note that the equations uniquely determine $\beta$ and since $c$ is odd, $c=-C$ guarantees that $\alpha \neq \beta$.

Therefore,  for $c$ odd, there is exactly one other value for $\beta$, so that $\alpha$ and $\beta$ give homotopy equivalent Poincar\' e complexes.

Hence, for $c$ odd, there are exactly 24 different homotopy types of Poincar\'e complexes in $\J_5$ of rank 1.
\end{proof}

\begin{theorem}
Every Poincar\' e complex in $\J_6$ of rank $k$ is homotopy equivalent to $(S^5 \times S^6)^{\# k}$.
\end{theorem}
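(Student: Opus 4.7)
The plan is to apply Theorem~\ref{heq1}, which reduces the claim to exhibiting a homotopy equivalence $F\colon \bar X \to \bar Y$ (with $Y=(S^5\times S^6)^{\# k}$) satisfying $F\circ\alpha = \beta$, where $\alpha$ and $\beta$ are the respective top-cell attaching maps. Lemma~\ref{heq} identifies both $\bar X$ and $\bar Y$ with $\bigvee_{i=1}^k(S^5_i \vee S^6_{i+k})$, and the standard cell decomposition of the connected sum yields $\beta=\sum_{i=1}^k[\iota_i,\iota_{i+k}]$.

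The first step is to apply Proposition~\ref{htpyinv} together with the homotopy-group facts $\pi_{10}(S^6)=0$ (stable range, $\pi_4^S=0$) and $\pi_{10}(S^5)=\Z/2$, generated by $[\Id_{S^5},\Id_{S^5}]\eta$ (nonzero for $n=6$ since $4\nmid 6$ and $n\neq 7$, cf.~\cite{Mukai}). These annihilate the $S^6$-components of $\alpha$ and force $p_i\alpha=a_i[\Id,\Id]\eta$ with $a_i\in\Z/2$ for $1\leq i\leq k$, giving
\[
\alpha = \sum_{i=1}^k a_i\,\iota_i[\Id,\Id]\eta + \sum_{1\leq i<j\leq k} m^\alpha_{ij}[\iota_i,\iota_j]\eta + \sum_{i=1}^k[\iota_i,\iota_{i+k}].
\]

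I would then construct $F$ with $A(F)=D(F)=I_k$ and $C(F)=(c_{is})\in M_k(\Z/2)$ to be determined; such $F$ is automatically a self-homotopy equivalence of $\bar X$ since it induces the identity on homology. From $F\iota_i=\iota_i$ for $i\leq k$ and $F\iota_{i+k}=\iota_{i+k}+\sum_s c_{is}\iota_s\eta$, only the Whitehead products $[\iota_i,\iota_{i+k}]$ in $\alpha$ are perturbed, producing extra contributions $\sum_{i,s}c_{is}[\iota_i,\iota_s\eta]$. Applying the composition identity $[\iota_i,\iota_s\circ\eta]=[\iota_i,\iota_s]\circ\eta$ (a specialisation of the Barcus--Barratt formula for Whitehead products right-composed with a suspension; mod~$2$ all signs are irrelevant) together with graded commutativity, these extra terms rewrite as $\sum_i c_{ii}\,\iota_i[\Id,\Id]\eta + \sum_{i<j}(c_{ij}+c_{ji})[\iota_i,\iota_j]\eta$. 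Matching coefficients of $F\alpha$ against $\beta$ gives the $\bmod~2$ system $c_{ii}\equiv a_i$ and $c_{ij}+c_{ji}\equiv m^\alpha_{ij}$ for $i<j$, trivially solvable (take $c_{ii}=a_i$, $c_{ij}=m^\alpha_{ij}$, $c_{ji}=0$ for $i<j$).

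With $F\alpha=\beta$ arranged, Lemma~\ref{lemmadegree} realises $F$ as the restriction of a degree-$1$ map $f\colon X\to(S^5\times S^6)^{\# k}$, and Theorem~\ref{heq1} then upgrades $f$ to a homotopy equivalence. The sole nonroutine ingredient in the argument is the composition identity used above; everything else is bookkeeping in $\Z/2$, which is what makes $n=6$ substantially cleaner than the $n=4,5$ cases treated earlier.
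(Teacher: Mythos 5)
Your argument is correct and is essentially the paper's own: the paper likewise produces a degree-one map whose matrix has identity corner blocks $A$ and $D$ and whose block $C$ absorbs the first- and second-order invariants of the more complicated attaching map (setting $a_{k+t\,t}\nu_5\eta^2_8=q_t\beta$ on the diagonal and $a_{k+s\,t}=m^\beta_{st}$ off it), and then invokes Theorem~\ref{heq1} to upgrade degree $1$ to a homotopy equivalence. The only cosmetic differences are that the paper runs the map in the opposite direction, from $(S^5\times S^6)^{\# k}$ to the general complex $Y$, and verifies $F\alpha=\beta$ by reading the conditions off the system in Theorem~\ref{degree} rather than by your direct Barcus--Barratt computation; your mod-$2$ system $c_{ii}\equiv a_i$, $c_{ij}+c_{ji}\equiv m^\alpha_{ij}$ is exactly that system specialised to $A=D=I_k$.
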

\begin{proof}
In this case we have $\pi_{10}(S^5)=\Z /2 $ generated by $\nu_5\eta^2_8$ , $\pi_{10}(S^6)=0$, and $[\Id_{S^5},\Id_{S^5}]\eta=\nu\eta^2$.

Note that $\alpha\colon S^{10}\lra \bigvee_{l=1}^k(S^5_l\vee S^6_l)$ given by
\[
\alpha = \sum_{l=1}^k [i_l,i_{k+l}]
\]
is the top cell attaching map of $X=(S^5 \times S^6)^{\# k}$. Let
\[
\beta = \sum_{t=1}^kj_tq_t\beta + \sum_{1\leq s < t \leq k} m_{st}[j_s,j_t]\eta +\sum_{t=1}^k [j_t,j_{k+t}]
\]
 be a top cell attaching map of some other Poincar\'e complex $Y$ from $\J_6$ of rank $k$.

Theorem~\ref{heq1} implies that there is a homotopy equivalence $h\colon X\lra Y$ since system~\eqref{system1.1} has a solution $(a_{ij})$ when assume that the degree $d=1$. The homotopy equivalence $h$ is determined by
\[
\begin{array}{ll}
a_{ij} = \delta_{ij} & \text{ for } 1\leq i,j \leq k, \text{ or } k+1 \leq i,j \leq 2k\\
a_{k+t t} \nu_5\eta^2_8= q_t\beta & \text{ for }  1 \leq t \leq k\\
a_{ij} = m_{i-k j} & \text{ for } i-j <k, 1\leq j \leq k, k+1 \leq i \leq 2k\\
a_{ij}=0 & \text{ otherwise. }\\
\end{array}
\]

Therefore, there is only one homotopy type of Poincar\' e complexes in $\J_6$ of rank $k$, namely $(S^5 \times S^6)^{\# k}$.

Note that
\[
D((S^5 \times S^6)^{\# k},(S^5 \times S^6)^{\# k})=\Z
\]
as for each $d \in \Z$ we have a map of degree $d$ defined by
\[
\begin{array}{ll}
a_{ij}=d & \text{ for }  1\leq i=j\leq k\\
a_{ij}=1 & \text{ for }  k+1 \leq i=j \leq 2k\\
a_{ij}=0 & \text{ otherwise.}\\
\end{array}
\]
\end{proof}

Now, consider the case $n=7$.

In this case, $\pi_{12}(S^6)=\Z /2$ generated by $\nu_6^2=\nu_6\nu_9$, $\pi_{12}(S^7)=0$, ${[\Id_{S^6},\Id_{S^6}]\eta=0}$ and  $H(\nu^2)=h_{11}\eta=0$ (see in \cite[Proposition 5.11]{Toda} and \cite{Mukai}).

For $k=1$, there are only two homotopy classes of the top cell attaching maps  and are represented by $ [i_1,i_2]$ and $i_1\nu^2 +[i_1,i_2]$. Denote the corresponding Poincar\'e complexes by $W_1 = S^6 \times S^7 $ and $Z_1$.

\begin{definition}
Let  $X$ and $Y$ be $n$-dimensional $CW$-complexes with one top cell attached by $\alpha,\beta$, respectively. Denote by $\bar X=\sk_{n-1}(X)$. Define a {\it homotopy connected sum of $X$ and $Y$}, denoted by $X\# Y$, as the homotopy cofibre of the map
\[
\alpha +\beta\colon S^{n-1}\lra \bar X\vee\bar Y.
\]
\end{definition}
Note that this definition is a natural homotopy theoretical generalisation of the classical connected sum operation between manifolds.
 
Define $W_k$ and $Z_k$ as Poincar\' e complexes in $\J_7$ of rank $k$ by
\[
W_k= W_1^{\#k}
\]
and
\[
Z_k = Z_1\#W_1^{\#(k-1)}.
\]

\begin{theorem}
\label{n=7}
For $k\geq 1$, there are two different homotopy types of Poincar\' e complexes in $\J_7$ of rank $k$ given by $W_k$ and $Z_k$.\qed
\end{theorem}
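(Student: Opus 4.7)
The plan is to apply Theorem~\ref{heq1} together with the transformation laws of Sections~3--4 to reduce the classification to an orbit problem on $(\Z/2)^k$.

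First, I would feed $n=7$ into Proposition~\ref{htpyinv}. Using $\pi_{12}(S^7)=0$, $[\Id_{S^6},\Id_{S^6}]\eta=0$, and $H(\nu^2)=0$, the top-cell attaching map $\alpha\colon S^{12}\to\bar X$ of any $X\in\J_7$ of rank $k$ collapses to
\[
\alpha \;=\; \sum_{i=1}^{k} a_i\,\iota_i\nu^2 \;+\; \sum_{1\le i<j\le k} m^\alpha_{ij}\,[\iota_i,\iota_j]\eta \;+\; \sum_{i=1}^{k} [\iota_i,\iota_{i+k}],
\]
with $a_i,m^\alpha_{ij}\in\Z/2$. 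So $X$ is encoded by the data $(a,m^\alpha)\in(\Z/2)^k\oplus(\Z/2)^{k(k-1)/2}$.

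Next, by Theorem~\ref{heq1}, $X\simeq Y$ iff some self-map $h\colon\bar X\to\bar Y$ with $h\alpha=\beta[\pm 1]$ exists. Decomposing $h$ via Proposition~\ref{3.4} into blocks $A\in GL_k(\Z)$ (the $S^6\!\to\!S^6$ part), $C$ with entries in $\Z/2$ (the $S^7\!\to\!S^6$ part), and $D=\pm(A^T)^{-1}$ (forced by~\eqref{eqmatrix}), and then substituting into Propositions~\ref{1storder} and~\ref{2ndorder}, I would extract the transformation laws
\[
a'\equiv A^T a \pmod 2, \qquad m^\beta_{st}\equiv \sum_{i<j}(a_{is}a_{jt}+a_{it}a_{js})\,m^\alpha_{ij} + \sum_{i=1}^k(a_{is}c_{it}+a_{it}c_{is}) \pmod 2,
\]
where I have already used $h^\alpha_{ii}=H(a_i\nu^2)=0$ and $[\Id,\Id]\eta=0$ to drop the remaining terms.

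The key observation is that with $A=I_k$ (so $a'=a$) and $C$ free, the second law reduces to $m^\beta_{st}=m^\alpha_{st}+c_{st}+c_{ts}$, whose right-hand side independently ranges over all of $\Z/2$ at each pair $s<t$. Hence the $m_{ij}$'s carry no homotopy information: the only invariant is the $GL_k(\Z)$-orbit of $a$ read mod~$2$, which via the surjection $GL_k(\Z)\twoheadrightarrow GL_k(\Z/2)$ is a $GL_k(\Z/2)$-orbit in $(\Z/2)^k$. Since $GL_k(\Z/2)$ acts with exactly two orbits, $\{0\}$ and $(\Z/2)^k\setminus\{0\}$, there are exactly two homotopy types. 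The zero orbit is realised by $W_k$, whose attaching map is $\sum_i[\iota_i,\iota_{i+k}]$; the nonzero orbit is realised by $Z_k=Z_1\# W_1^{\#(k-1)}$, whose attaching map has $a=(1,0,\ldots,0)$ by the definition of the homotopy connected sum.

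The main obstacle is the bookkeeping for the second step: verifying that after the $n=7$ simplifications the formulae of Propositions~\ref{1storder} and~\ref{2ndorder} really do reduce to the two clean transformation laws above, and checking that the $C$-contribution $\sum_i(a_{is}c_{it}+a_{it}c_{is})$ at $A=I_k$ indeed surjects onto $(\Z/2)^{k(k-1)/2}$. Once this is in place, the orbit argument, and the identification of the two orbit representatives with $W_k$ and $Z_k$, is essentially immediate.
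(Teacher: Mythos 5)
Your proposal is correct and rests on exactly the same ingredients as the paper's own argument: the $n=7$ specialization of the equations of Theorem~\ref{degree} (using $\pi_{12}(S^7)=0$, $[\Id_{S^6},\Id_{S^6}]\eta=0$ and $H(\nu^2)=0$), the identification of homotopy equivalences with degree-$\pm1$ maps from Theorem~\ref{heq1}, the absorption of the invariants $m^\alpha_{ij}$ into the free $C$-block, and the reduction of the first-order data to the zero/nonzero dichotomy. The paper packages this as two lemmas exhibiting explicit integral matrices realizing each required equivalence (and separately shows $D(W_k,Z_k)=2\Z$ to distinguish the two types), whereas you phrase the same computation as counting the two $GL_k(\Z/2)$-orbits on $(\Z/2)^k$ via the surjection $GL_k(\Z)\twoheadrightarrow GL_k(\Z/2)$; the mathematical content is identical.
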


Before proving Theorem~\ref{n=7}, some preliminary observations are needed.

\begin{lemma}
Let
\[
\alpha =\sum_{i=1}^ki_ip_i\alpha + \sum_{1\leq i < j \leq k} m_{ij}^{\alpha}[i_i,i_j]\eta + \sum_{i=1}^k [i_i,i_{k+i}]
\]
and
\[
\beta = \sum_{t=1}^kj_tq_t\beta + \sum_{1\leq s < t \leq k} m_{st}^{\beta}[j_s,j_t]\eta +\sum_{t=1}^k [j_t,j_{k+t}]
\]
be top cell attaching maps of some Poincar\'e complexes $X$ and $Y$ in $\J_7$ of rank $k$.
If $|\{i|p_i\alpha \neq 0\}| = |\{s|q_s\beta \neq 0\}|$, then $X \simeq Y$ and $D(X,Y)=\Z$.
\end{lemma}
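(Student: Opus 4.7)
My plan is to solve the system of Theorem~\ref{degree} for an arbitrary $d \in \Z$. Such a solution yields $D(X,Y) = \Z$, and specialising to $d = 1$ and invoking Theorem~\ref{heq1} will simultaneously give the homotopy equivalence $X \simeq Y$.

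First, I will specialise the system of Theorem~\ref{degree} to $n = 7$. The vanishings $\pi_{12}(S^7) = 0$, $[\Id_{S^6},\Id_{S^6}]\eta = 0$ and $H(\nu^2) = 0$ recalled just before Theorem~\ref{n=7} imply that $p_i\alpha = 0$ and $q_s\beta = 0$ for $i, s > k$, that the Whitehead-product correction in the first equation drops out, that each $h^\alpha_{ii}$ vanishes, and that the second equation becomes vacuous. Writing $(a_{ij})$ as four $k \times k$ corner blocks $A, B, C, D$ as in Lemma~\ref{vuckolema} (with $m = k$), the remaining equations are the $\Z/2$-conditions
\[
\sum_{i\in S_\alpha} a_{it} \equiv d\,[t\in S_\beta] \pmod 2 \quad \text{for } 1\leq t\leq k,
\]
the mod-$2$ quadratic conditions from the third equation relating the $a_{ij}$ to $m^\alpha$ and $m^\beta$, and the matrix identity $A^T D = d I_k$; the block $B$ is absent from every equation. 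Here $S_\alpha = \{i : p_i\alpha \neq 0\}$ and $S_\beta$ is defined analogously.

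Second, I will use the hypothesis $|S_\alpha| = |S_\beta|$ to pick a permutation $\sigma$ of $\{1,\ldots,k\}$ with $\sigma(S_\alpha) = S_\beta$, and set $A = d P_\sigma$ and $D = P_\sigma$, where $P_\sigma$ is the permutation matrix with $(P_\sigma)_{it} = \delta_{t,\sigma(i)}$. Then $A^T D = d I_k$ automatically, and each $\Z/2$-condition collapses to $d[t\in\sigma(S_\alpha)] = d[t\in S_\beta]$, which holds. It remains to choose integer entries for $C = (c_{is}) = (a_{k+i,s})$ satisfying the mod-$2$ quadratic conditions; after substitution, each such condition becomes
\[
d^2\, m^\alpha_{i_0 j_0} + d\bigl(c_{\sigma^{-1}(s),t} + c_{\sigma^{-1}(t),s}\bigr) \equiv d\,m^\beta_{st} \pmod 2,
\]
with $i_0 = \min\{\sigma^{-1}(s),\sigma^{-1}(t)\}$ and $j_0 = \max\{\sigma^{-1}(s),\sigma^{-1}(t)\}$. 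For even $d$ this is automatic; for odd $d$ it becomes a prescription for $c_{\sigma^{-1}(s),t} + c_{\sigma^{-1}(t),s}$ modulo $2$, one for each pair $s < t$.

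The main combinatorial point, and essentially the only obstacle, is verifying that these prescriptions on the entries of $C$ do not collide. A short bookkeeping check shows that each cell $(i,j)$ of $C$ appears in at most one such constraint, namely the one indexed by the unordered pair $\{\sigma(i),j\}$ and only when $\sigma(i)\neq j$; consequently the constraints decouple and a $\{0,1\}$-valued $C$ satisfying them all exists. Once the whole system is solved, Theorem~\ref{degree} produces a map $f\colon X\lra Y$ of degree $d$ for every $d\in\Z$, so $D(X,Y) = \Z$, and applying Theorem~\ref{heq1} to the case $d = 1$ upgrades this to the homotopy equivalence $X\simeq Y$.
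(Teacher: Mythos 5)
Your proof is correct, and it reaches the conclusion by the same overall strategy as the paper (exhibit an explicit integral solution of the system in Theorem~\ref{degree} for every $d$, then apply Theorem~\ref{heq1} at $d=1$), but the construction itself is genuinely different. The paper first permutes the sphere summands so that the nonzero $p_i\alpha$ and $q_s\beta$ occupy the same index set, then introduces an intermediate complex $X_0$ obtained from $X$ by killing all $m^\alpha_{ij}$, and writes down a map $\bar X_0\lra\bar Y$ with $A=dI_k$, $D=I_k$ and the $m^\beta_{st}$ placed in the $C$-block; the conclusion $X\simeq Y$ is then obtained by the two-step chain $X\simeq X_0\simeq Y$ (applying the same argument with $Y$ replaced by $X$). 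You instead build the degree-$d$ map directly from $\bar X$ to $\bar Y$, encoding the required matching of nonzero first-order invariants in a permutation matrix $A=dP_\sigma$, $D=P_\sigma$, and then solving the mod-$2$ quadratic conditions for the $C$-block so as to absorb both $m^\alpha$ and $m^\beta$ simultaneously. The price of the one-step approach is exactly the combinatorial check you carry out — that each entry of $C$ occurs in at most one constraint (the one indexed by $\{\sigma(i),j\}$, and only when $\sigma(i)\neq j$), so the prescriptions decouple; this check is correct. What your route buys is that it avoids the auxiliary space $X_0$ and the implicit re-use of the lemma with $Y=X$; what the paper's route buys is that after normalisation the solution matrix can be written down with no case analysis on the parity of $d$ and no decoupling argument.
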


\begin{proof}
Let $r=|\{i|p_i\alpha \neq 0\}| \geq 0$. By permuting the spheres in $\bar X$ and $\bar Y$, assume that
\[
p_1\alpha=...=p_r\alpha=q_1\beta=...=q_r\beta=\nu^2
\]
and
\[
p_{r+1}\alpha=...=p_k\alpha=q_{r+1}\beta=...=q_k\beta=0.
\]
In order to keep the cup product matrix to be the identity matrix, permute in the same way$(n-1)$-spheres and $n$-spheres.

Suppose $m_{ij}^{\alpha}=0$ for all $1\leq i < j \leq k$ and denote the obtained space by $X_0$. Define $P\colon \bar X_0 \lra \bar Y$ by
\[
a_{ij}=\left\{\begin{array}{ll}
d & \text{    for } 1\leq i=j\leq k\\
1& \text{    for } k+1 \leq i=j \leq 2k\\
m_{st}^{\beta} & \text { for } i=k+s,\text{ }j=t, \text { } 1 \leq s < t \leq k\\
0&  \text{    otherwise}.\\
\end{array}\right.
\]

Since $P$ satisfies all equations of Theorem~\ref{degree} we conclude that $P$ extends to a map of degree $d$. Therefore $D(X_0,Y)=\Z$.

Fix $d=1$. Then by Theorem~\ref{heq1}, $P$ is a homotopy equivalence and therefore$X_0\simeq Y$. This completes the proof of the lemma as $X\simeq X_0 \simeq Y$.
\end{proof}

\begin{lemma} With the notation as above.

(a) If $q_s\beta =0$ for all $1\leq s \leq k$, then $Y \simeq W_k$.

(b) If $q_s\beta \neq 0$ for at least one $1\leq s \leq k$, then $Y \simeq Z_k$.
\end{lemma}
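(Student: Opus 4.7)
The plan is to reduce both parts to the preceding lemma, which compares Poincar\'e complexes in $\J_7$ through the count $|\{s : q_s\beta\neq 0\}|$ of non-trivial spherical first-order invariants. For (a), the attaching map of $W_k$ is $\sum_{t=1}^k[i_t,i_{k+t}]$, so $|\{i : p_i\alpha_{W_k}\neq 0\}|=0$; combined with the hypothesis $|\{s : q_s\beta\neq 0\}|=0$, the preceding lemma immediately yields $Y\simeq W_k$.

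Part (b) is harder because $Z_k$ has $|\{i : p_i\alpha_{Z_k}\neq 0\}|=1$, while a priori the count $r=|\{s : q_s\beta\neq 0\}|$ can be any value in $\{1,\ldots,k\}$. The plan is to precompose $\beta$ with a self-equivalence $\phi$ of $\bar Y$ so that the resulting attaching map $\phi\beta$ has exactly one non-zero spherical first-order invariant, and then invoke the preceding lemma. Identify $\vec v=(q_1\beta,\ldots,q_k\beta)\in\pi_{12}(S^6)^k\cong(\Z/2)^k$ via the generator $\nu^2$; by hypothesis $\vec v\neq 0$. Using the surjection $GL_k(\Z)\twoheadrightarrow GL_k(\Z/2)$ and the transitivity of the $GL_k(\Z/2)$-action on $(\Z/2)^k\setminus\{0\}$, choose $M\in GL_k(\Z)$ with $M\vec v\equiv e_1\pmod 2$. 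Set $N=(M^T)^{-1}\in GL_k(\Z)$ and define $\phi\colon\bar Y\to\bar Y$ to act by $M$ on the summand $\bigvee_{i\leq k} S^{n-1}_i$ and by $N$ on the summand $\bigvee_{i>k} S^n_i$, with no mixing between the two degrees; this is a homotopy self-equivalence of $\bar Y$.

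The key computation is to apply Proposition~\ref{1storder} to $\phi\beta$. Because $n=7$, the correction term involving $[\Id_{S^6},\Id_{S^6}]\eta$ vanishes, and $\pi_{12}(S^7)=0$ kills all contributions from the $n$-sphere summands, so
\[
q_s(\phi\beta)=\sum_{i\leq k}\phi_{is}(q_i\beta)=(M\vec v)_s,
\]
which equals $\nu^2$ for $s=1$ and $0$ for $s\geq 2$. The distributivity corrections that a priori threaten the expansion of $\phi\circ\bigl(\sum_t j_tq_t\beta\bigr)$ also vanish, because $H(\nu^2)=0$. The choice $N=(M^T)^{-1}$ forces $MN^T=I$, so the Whitehead sum $\sum_t[\phi j_t,\phi j_{k+t}]$ collapses to $\sum_t[j_t,j_{k+t}]$; any residual $[j_a,j_b]\eta$ terms are absorbed into the mod-$2$ second-order invariants $m^{\phi\beta}_{ab}$. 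Hence $\phi\beta$ is already in the canonical form of Proposition~\ref{htpyinv}.

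Setting $Y'=\bar Y\cup_{\phi\beta}e^{2n-1}$, the self-equivalence $\phi$ extends by the identity on the top cell to a homotopy equivalence $Y\to Y'$. By construction $|\{s : q_s(\phi\beta)\neq 0\}|=1$, matching $Z_k$, so the preceding lemma gives $Y'\simeq Z_k$, whence $Y\simeq Z_k$. The principal technical obstacle is controlling the Hilton--Milnor corrections when transporting $\beta$ by $\phi$; the three vanishing facts $\pi_{12}(S^7)=0$, $[\Id_{S^6},\Id_{S^6}]\eta=0$ and $H(\nu^2)=0$ special to $n=7$ reduce the analysis to plain linear algebra over $\Z/2$.
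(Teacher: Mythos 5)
Your proof is correct, and part (a) coincides with the paper's (both just invoke the preceding lemma with the count equal to $0$). For part (b) you take a genuinely different, though closely related, route. The paper fixes $X=Z_k$, uses the preceding lemma to assume all $m^\beta_{st}=0$ and $q_s\beta=\nu^2$ exactly for $s\le r$, and then writes down an explicit map $P\colon \bar Z_k\to\bar Y$ (with $A(P)=I+\sum_{j=2}^{r}E_{1j}$ and $D(P)=I-\sum_{i=2}^{r}E_{i1}$, so that $A(P)^TD(P)=I$), verifies the equations of Theorem~\ref{degree} for $d=1$, and concludes by Theorem~\ref{heq1}. You instead normalise $\beta$ itself by a block self-equivalence $\phi=M\vee N$ of $\bar Y$ with $N=(M^T)^{-1}$, chosen so that the vector of first-order invariants is carried to $e_1$ modulo $2$, and then feed the normalised complex $Y'$ into the preceding lemma. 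The two arguments rest on exactly the same ingredients --- Propositions~\ref{1storder} and~\ref{2ndorder} applied to a degree-preserving block map, the relation $A^TD=I$, and the vanishing of $\pi_{12}(S^7)$, $[\Id_{S^6},\Id_{S^6}]\eta$ and $H(\nu^2)$ --- but yours makes the underlying $GL_k(\Z/2)$-orbit structure on $(\Z/2)^k\setminus\{0\}$ explicit and treats all $r\ge 1$ uniformly, at the cost of an extra intermediate space $Y'$ and a change-of-basis computation, whereas the paper's is more concrete and verifies the degree equations directly. One cosmetic point: with the convention $\phi j_t=\sum_u M_{tu}j_u$ the first-order invariants of $\phi\beta$ come out as $M^T\vec v$ rather than $M\vec v$, but since you only need some invertible integral matrix reducing $\vec v$ to $e_1$ mod $2$, this does not affect the argument.
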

\begin{proof}
Part (a) follows directly from the previous lemma when $|\{i|p_i\alpha \neq 0\}|=0$.

For part (b), assume $X=Z_k$ and $r=|\{s|q_s\beta \neq 0\}|>1$. Therefore
\[
\alpha = i_1\nu ^2 + \sum_{i=1}^k [i_i,i_{k+i}]
\]
and assume, using the previous lemma, that all $m^\beta_{st}=0$, that is,
\[
\beta = \sum_{s=1}^r j_s\nu ^2 + \sum_{s=1}^k [j_s,j_{k+s}].
\]

For $d=1$, define $P\colon \bar X \lra \bar Y$ by
\[
a_{ij}=\left\{\begin{array}{ll}
1& \text{   for } i=j \text { or } i=1,1\leq j \leq r\\
-1& \text{  for } k+2\leq i \leq k+r, j=k+1\\
0& \text{   otherwise.}\\
\end{array}\right.
\]

The map $P$ satisfies all equations of Theorem~\ref{degree} for $d=1$, and by Theorem~\ref{heq1} it induces a homotopy equivalence between $Z_k$ and $Y$.
\end{proof}

\begin{corollary}
\[
D(W_k,W_k) = D(Z_k,Z_k) = \Z
\]\qed
\end{corollary}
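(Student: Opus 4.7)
The plan is to apply Theorem~\ref{degree} directly with $X=Y=W_k$ (and then with $X=Y=Z_k$), using the attaching maps already recorded in this section together with the two dimension-$7$ identities $[\mathrm{Id}_{S^6},\mathrm{Id}_{S^6}]\eta=0$ and $H(\nu^2)=0$. These identities kill most of the nonlinear terms in the system of Theorem~\ref{degree}, so for each $d\in\Z$ I will exhibit an explicit integral matrix $(a_{ij})$ solving the system, and then appeal to the sufficiency direction of Theorem~\ref{degree} to produce a self-map of degree $d$.

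For $W_k$, the top cell attaching map $\alpha=\sum_{i=1}^{k}[i_i,i_{k+i}]$ has $p_i\alpha=0$ for all $i$, together with $m_{ij}^{\alpha}=0$ and $h_{ii}^{\alpha}=0$. Setting $\beta=\alpha$, the first two families of equations in Theorem~\ref{degree} collapse to $0=0$, leaving only the mod-$2$ congruences of the third family and the quadratic equations of the fourth family. I propose the diagonal ansatz $a_{ii}=d$ for $1\le i\le k$, $a_{ii}=1$ for $k+1\le i\le 2k$, and $a_{ij}=0$ otherwise. Substituting shows $\sum_{i=1}^{k}a_{is}a_{i+k,t}=d\,\delta_{s,t-k}$, verifying the fourth family, while every cross-term $a_{is}a_{i+k,t}$ appearing in the third family vanishes because $s,t\le k$ forces $a_{i+k,t}=0$.

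For $Z_k$, the attaching map $\alpha=i_1\nu^2+\sum_{i=1}^{k}[i_i,i_{k+i}]$ still has $h_{11}^{\alpha}=H(\nu^2)=0$, the remaining $p_i\alpha=0$, and all $m_{ij}^{\alpha}=0$, but $p_1\alpha=\nu^2\ne 0$. Taking $\beta=\alpha$, the first family of equations for $t=1$ reduces in $\pi_{12}(S^6)=\Z/2$ to $a_{11}\nu^2=d\nu^2$, i.e.\ $a_{11}\equiv d\pmod{2}$, while every other component of the first two families is again $0=0$. The same diagonal choice as above automatically gives $a_{11}=d$, satisfying the congruence, and the verification of the third and fourth families of equations transfers verbatim.

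The hard part is essentially bookkeeping: one must be careful that every cross-term in the mod-$2$ family actually vanishes for the diagonal ansatz --- which happens because the entries $a_{i+k,t}$ with $t\le k$ are all zero --- and that the sole nontrivial constraint $a_{11}\equiv d\pmod{2}$ coming from the $\nu^2$-summand in the attaching map of $Z_k$ is automatically met. Once these routine checks are completed, Theorem~\ref{degree} produces a self-map of arbitrary degree in each case, so $D(W_k,W_k)=D(Z_k,Z_k)=\Z$.
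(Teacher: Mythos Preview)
Your proposal is correct and is essentially the paper's own argument: the corollary is marked \qed because it follows immediately from the preceding lemma, whose proof uses exactly the diagonal ansatz $a_{ii}=d$ for $1\le i\le k$, $a_{ii}=1$ for $k+1\le i\le 2k$ (plus some off-diagonal $m^\beta_{st}$-entries that vanish here) and verifies the equations of Theorem~\ref{degree} just as you do. Your direct check for $W_k$ and $Z_k$ is simply the specialization of that lemma's proof to $X=Y$, with the same use of $[\Id_{S^6},\Id_{S^6}]\eta=0$ and $H(\nu^2)=0$.
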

\begin{proposition}
\[
D(W_k,Z_k) = D(Z_k, W_k) = 2\Z.
\]
\end{proposition}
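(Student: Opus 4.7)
The plan is to apply Theorem~\ref{degree} in both directions, exploiting two features special to $n=7$: $[\Id_{S^6},\Id_{S^6}]\eta=0$ (cf.~\cite{Mukai}) and $H(\nu^2)=0$, so that $h^\alpha_{ii}=0$ for every $X\in\J_7$. Under these hypotheses, the attaching map of $W_k$ may be taken with $p_i\alpha=0$ for all $i$ and $m_{ij}^\alpha=0$, and that of $Z_k$ with $p_1\alpha=\nu^2$, $p_i\alpha=0$ for $i\neq 1$, and $m_{ij}^\alpha=0$.

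For $D(W_k,Z_k)\subseteq 2\Z$, the source has every $p_i\alpha=0$ and $[\Id,\Id]\eta=0$, so equation~\eqref{1.jednacina} collapses to $0=d(q_t\beta)$; specialising to $t=1$ gives $d\nu^2=0$ in $\pi_{12}(S^6)=\Z/2$, forcing $d$ to be even. For $D(Z_k,W_k)\subseteq 2\Z$, now $p_1\alpha=\nu^2$ while every $q_t\beta=0$, so equation~\eqref{1.jednacina} with $1\leq t\leq k$ reads $a_{1t}\nu^2=0$, i.e.\ $a_{1t}$ is even for each such $t$. Writing $A=(a_{is})_{1\leq i,s\leq k}$ and $D=(a_{k+i,k+s})_{1\leq i,s\leq k}$, equation~\eqref{4.jednacina} becomes the matrix identity $A^TD=dI_k$; taking determinants gives $d^k=\det A\cdot\det D$, and since the first row of $A$ is even, $\det A$ is even, so $d^k$ is even and therefore $d$ is even.

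For the reverse inclusions I exhibit explicit integer solutions. For a map $W_k\to Z_k$ of even degree $d$, set $a_{ii}=1$ and $a_{k+i,k+i}=d$ for $1\leq i\leq k$, with every other $a_{ij}=0$; for a map $Z_k\to W_k$ of even degree $d=2m$, set $a_{ii}=2$ and $a_{k+i,k+i}=m$ for $1\leq i\leq k$, with every other $a_{ij}=0$. In both cases equation~\eqref{4.jednacina} is $A^TD=dI_k$ by design, equation~\eqref{2.jednacina} is vacuous because $p_i\alpha=0$ for $i\geq k+1$ in both $W_k$ and $Z_k$, and equation~\eqref{1.jednacina} reduces to an even multiple of $\nu^2$ being zero in $\pi_{12}(S^6)$. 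The unlabelled mod~$2$ congruence of Theorem~\ref{degree}, which in our setting reduces to $\sum_i(a_{is}a_{i+k,t}+a_{it}a_{i+k,s})\equiv 0\pmod 2$ (since $h^\alpha_{ii}$, $m^\alpha_{ij}$ and $m^\beta_{st}$ all vanish), is automatic because the off-diagonal block $(a_{k+i,s})_{i,s\leq k}$ of our solutions vanishes.

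The only point genuinely requiring care is to see that the mod~$2$ equation imposes no further parity obstruction on $d$ beyond $d$ even; once one notices that it involves only entries of the block $C=(a_{k+i,s})_{i,s\leq k}$, and that choosing $C=0$ is consistent with the remaining equations, the whole verification collapses to bookkeeping.
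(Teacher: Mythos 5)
Your proof is correct and follows essentially the same route as the paper: both directions of the containment $D\subseteq 2\Z$ use equation~\eqref{1.jednacina} (for $Z_k\to W_k$ combined with the determinant identity $d^k=\det A\cdot\det D$ coming from~\eqref{4.jednacina}), and the reverse inclusions are obtained by exhibiting explicit diagonal solutions, differing from the paper's only in the cosmetic choice of which diagonal block carries the factor $d$. You even correctly attribute the evenness of $a_{11},\dots,a_{1k}$ to equation~\eqref{1.jednacina}, where the paper's text mis-cites equation~\eqref{2.jednacina} (which is vacuous here since $\pi_{12}(S^7)=0$).
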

\begin{proof}
For maps $W_k\lra Z_k$, equation~\eqref{1.jednacina} of Theorem~\ref{degree} becomes
\[
d\nu ^2 =0.
\]
Therefore, there are no maps of odd degree since $\nu ^2$ is of order 2.

For $d$ even, consider map $P\colon \bar W_k \lra \bar Z_k$ defined by
\[
a_{ij}=\left\{\begin{array}{ll}
d& \text{   for } 1\leq i=j  \leq k\\
1& \text{   for } k+1\leq i= j \leq 2k\\
0& \text{   otherwise}.\\
\end{array}\right.
\]
It satisfies all equations of Theorem~\ref{degree} and induces a map of degree $d$.

Regarding maps between $Z_k$ and $W_k$, equations~\eqref{2.jednacina} give that for a map $P\colon \bar Z_k \lra \bar W_k$ coefficients $a_{11}, a_{12},...,a_{1k}$ have to be even. Then $\det A(P)$ is even and because
\[
d^k=\det P^*=\det A(P)\det D(P)
\]
degree $d$ has to be even.

For $d$ even, consider the map $P\colon \bar Z_k \lra \bar W_k$ defined by
\[
a_{ij}=\left\{\begin{array}{ll}
d& \text{   for } 1\leq i=j  \leq k\\
1& \text{   for } k+1\leq i= j \leq 2k\\
0& \text{   otherwise}.\\
\end{array}\right.
\]
It satisfies all equations of Theorem~\ref{degree} and extends to a map between $Z_k$ and $W_k$ of degree $d$.
\end{proof}
\begin{proof}[Proof of Theorem~\ref{n=7}]
The above analysis determines all different homotopy types of Poincar\' e complexes in $\J_7$ of rank $k$.
\end{proof}

\end{document}